\theoremstyle{definition}
\newtheorem{dfn}{Definition}[section]
\theoremstyle{plain}
\newtheorem{thm}{Theorem}
\newtheorem*{thm*}{Theorem}
\newtheorem{prop}[dfn]{Proposition}
\newtheorem*{prop*}{Proposition}
\newtheorem{lem}[dfn]{Lemma}
\newtheorem*{lem*}{Lemma}
\newtheorem*{sublemma*}{Sub-lemma}
\newtheorem{cor}[dfn]{Corollary}
\newtheorem*{fact*}{Fact}
\theoremstyle{remark}
\newtheorem{rem}[dfn]{Remark}
\newcommand\chech[1]{\mathaccent"7014{#1}}
\newcommand{\R}{\mathbf{R}}
\renewcommand{\sl}{\mathfrak{sl}}
\newcommand{\aff}{\mathfrak{aff}}
\renewcommand{\a}{\mathfrak{a}}
\newcommand{\g}{\mathfrak{g}}
\newcommand{\h}{\mathfrak{h}}    
\newcommand{\n}{\mathfrak{n}}
\newcommand{\so}{\mathfrak{so}}
\newcommand{\p}{\mathfrak{p}}
\newcommand{\e}{\mathrm{e}}
\renewcommand{\d}{\mathrm{d}}
\DeclareMathOperator{\Isom}{Isom}
\DeclareMathOperator{\Ad}{Ad}
\DeclareMathOperator{\Span}{Span}
\DeclareMathOperator{\Kill}{Kill}
\DeclareMathOperator{\GL}{GL}
\DeclareMathOperator{\CO}{CO}
\DeclareMathOperator{\PSL}{PSL}
\DeclareMathOperator{\PO}{PO}
\DeclareMathOperator{\SO}{SO}
\DeclareMathOperator{\Aff}{Aff}
\DeclareMathOperator{\Ima}{Im}
\DeclareMathOperator{\Conf}{Conf}
\DeclareMathOperator{\id}{id}
\DeclareMathOperator{\Mon}{Mon}
\renewcommand{\S}{\mathbf{S}}
\renewcommand{\H}{\mathbf{H}}
\newcommand{\dS}{\mathbf{dS}} 
\newcommand{\Ein}{\mathbf{Ein}}
\renewcommand{\epsilon}{\varepsilon}
\renewcommand{\geq}{\geqslant}
\renewcommand{\leq}{\leqslant}
\renewcommand{\hat}{\widehat}  
\newcommand{\hx}{\hat{x}}
\renewcommand{\tilde}{\widetilde}
\renewcommand{\bar}{\overline}
\title[Essential actions of $\PSL(2,\R)$ on analytic Lorentz manifolds]{Essential conformal actions of $\PSL(2,\R)$ on real-analytic compact Lorentz manifolds}
\author{Vincent Pecastaing}
\begin{document}

\maketitle

\begin{center}
\today
\end{center}

\begin{abstract}
The main result of this paper is the conformal flatness of real-analytic compact Lorentz manifolds of dimension at least $3$ admitting a conformal essential (\textit{i.e.} conformal, but not isometric) action of a Lie group locally isomorphic to $\PSL(2,\R)$. It is established by using a general result of M. Gromov on local isometries of real-analytic $A$-rigid geometric structures. As corollary, we deduce the same conclusion for conformal essential actions of connected semi-simple Lie groups on real-analytic compact Lorentz manifolds. This work is a contribution to the understanding of the Lorentzian version of a question asked by A. Lichnerowicz.
\end{abstract}

\tableofcontents

\section{Introduction}
\label{s:intro}

In this paper, we discuss some geometric aspects of conformal actions of semi-simple Lie groups on compact Lorentz manifolds. The general problem we are interested in is the following: To what extent does the conformal group of a Lorentz manifold $(M,g)$ determine the conformal geometry of $(M,g)$? A beautiful answer to this question was found in Riemannian signature. Motivated by a conjecture of Lichnerowicz, Ferrand and Obata proved that if the conformal group of a Riemannian manifold is strictly larger than its isometry group (even if we change conformally the metric), then this manifold is conformally equivalent to the Möbius sphere or to the Euclidian space of same dimension (\cite{obata71}, \cite{ferrand71}, \cite{ferrand96}).

When we leave the positive definite signature, the situation is more complicated. Since the work of Frances (\cite{frances_ferrand_obata_lorentz}), we know that it is not reasonable to expect a global result as striking as Ferrand-Obata's theorem, even in Lorentzian signature. However, a question remains open about the local conformal geometry of Lorentz manifolds:

\vspace*{0.2cm}

\hspace*{-.5cm} \textit{If the conformal group of a compact Lorentz manifold is not reduced to its isometry group (even after a conformal change of metric), can we conclude that the manifold is conformally flat ?}

\vspace*{0.2cm}

In \cite{alekseevsky}, Alekseevsky gave counter-examples to this question in the non-compact case (see also the work of K\"uhlner and Rademacher in any signature, \cite{kuhnel_rademacher1}, \cite{kuhnel_rademacher2}). Thus, the compactness assumption cannot be removed. Let us also mention that in \cite{frances_lichnerowicz}, Frances answered negatively to this question in $(p,q)$-signature, with $\min(p,q) \geq 2$, and his counterexample has moreover \textit{real-analytic regularity} (one could have expect that with stronger regularity assumption, conformal flatness would have been more plausible). Thus, when we pass from the Riemannian setting to the signature $(p,q)$, with $\min(p,q) \geq 2$, the same problem has drastically different answers, and we would like to know what happens for the intermediate case of Lorentzian metrics.

\vspace*{0.2cm}

Let $M^n$ be a differentiable manifold of dimension $n \geq 3$. Recall that two pseudo-Riemannian metrics $g$ and $g'$ on $M$ are said to be conformal if there exists a positive smooth function $\varphi$ on $M$ such that $g' = \varphi g$. We note $[g] = \{g', \ g' \text{ conformal to } g\}$ the conformal class of $g$. We say that $(M,g)$ is \textit{conformally flat} if every $x \in M$ admits a neighbourhood $U$ such that the restriction $g|_U$ is conformal to a flat metric on $U$. A local diffeomorphism $f : (M,g) \rightarrow (N,h)$ between two pseudo-Riemannian manifolds is said to be conformal if $f^*[h] = [g]$. The conformal group $\Conf(M,g)$ consists of the conformal diffeomorphisms of $(M,g)$. From the rigidity of conformal structures in dimension greater than or equal to $3$, it follows that $\Conf(M,g)$ is a Lie transformation group.

\begin{dfn}
Let $H < \Conf(M,g)$ be a Lie subgroup. We  say that \textit{$H$ acts inessentially on $M$}, or simply $H$ \textit{is inessential}, if there exists $g'$ conformal to $g$ such that $H$ acts on $M$ by isometries of $g'$. If not, we say that $H$ acts \textit{essentially}, or simply that $H$ \textit{is essential}.
\end{dfn}

The present work investigates the situation where a compact Lorentz manifold $(M^n,g)$, $n \geq 3$, admits a conformal essential action of a \textit{semi-simple Lie group}. This hypothesis is of course stronger than what is assumed in the initial question where for instance $\Conf(M,g)$ could only contain an essential one-parameter group or an essential discrete subgroup. 

In \cite{article2}, following a previous investigation of Bader and Nevo, we gave a classification of semi-simple Lie groups without compact factor that can act conformally on a compact Lorentz manifolds of dimension $n \geq 3$. The question we are asking now lies in the continuation of this classification result: once we know what group \textit{can} act, we want to know on which geometry and with which dynamic it actually acts. 

\vspace*{0.2cm}

This geometric problem is well described, in any signature, when the group that acts is semi-simple without compact factor and has \textit{high real-rank}. In fact, the work of Zimmer in \cite{zimmer87} shows that the real-rank of a semi-simple Lie group without compact factor acting by conformal transformations on a compact pseudo-Riemannian manifold of signature $(p,q)$ is bounded by $\min(p,q)+1$. In \cite{bader_nevo} and \cite{frances_zeghib} (and \cite{bader_frances_melnick} in a more general geometric context), the authors investigated the extremal case where the bound is achieved. It turned out that not only such a manifold must be conformally flat, but also that it is a compact quotient of the universal cover of the model space of conformal geometry (the \textit{Einstein universe}, see Section \ref{ss:examples}). Thus, our contribution is mainly the description of conformal actions of non-compact simple Lie groups of real-rank $1$ on compact Lorentz manifolds. Before stating our results, let us describe the situation for \textit{inessential} actions, \textit{i.e.} actions that are isometric after a conformal change of metric.

\vspace*{0.2cm}

First of all, Zimmer proved the following

\begin{thm*}[\cite{zimmer86}]
Let $(M,g)$ be a Lorentz manifold of finite volume and $H$ be a semi-simple simple Lie group without compact factor. If $H$ acts faithfully and isometrically on $(M,g)$, then it is locally isomorphic to $\PSL(2,\R)$.
\end{thm*}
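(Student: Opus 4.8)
The plan is to use the finite-volume hypothesis twice: once to produce a finite $H$-invariant measure to which the ergodic-theoretic machinery applies, and once, more decisively, to force the generic $H$-orbit to be a finite-volume homogeneous space, on which the Borel density theorem rigidifies the induced metric into a multiple of the Killing form. The Lorentzian volume form of $g$ gives a smooth $H$-invariant measure $\mu$ on $M$, finite by hypothesis; after passing to an ergodic component I may assume the action ergodic. Writing $\mathfrak h = \Lie(H)$, since $H$ is simple and noncompact $\mathfrak h$ is a simple real Lie algebra with Cartan decomposition $\mathfrak h = \mathfrak k \oplus \p$, and the negative index of its Killing form $B$ equals $\dim \mathfrak k$. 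The entire target reduces to proving $\dim \mathfrak k = 1$, which characterizes $\mathfrak{sl}(2,\R)$ among simple noncompact Lie algebras and yields the local isomorphism $H \sim \PSL(2,\R)$.

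As a first, supporting reduction I would feed the derivative cocycle of the isometric action, which takes values in $\O(1,n-1)$, into Zimmer's cocycle superrigidity theorem (applicable since $H$ is semisimple without compact factor and $\mu$ is finite and $H$-invariant). This produces a nontrivial, hence injective, Lie algebra homomorphism $\mathfrak h \hookrightarrow \so(1,n-1)$, so the real rank of $H$ is at most $\mathrm{rank}_{\R}\so(1,n-1) = 1$ and $\mathfrak h$ is already pinned to be a simple Lie algebra of real rank one. This step alone cannot isolate $\mathfrak{sl}(2,\R)$: the standard embedding $\so(k,1)\hookrightarrow\so(1,n-1)$ shows every $\so(k,1)$ satisfies the purely representation-theoretic constraint, so a genuinely dynamical input is indispensable.

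The decisive step uses the orbits. At $\mu$-almost every $x$ I claim the stabilizer $H_x$ is discrete: the conditional measures of $\mu$ along $H$-orbits are finite and $H$-invariant, so each generic orbit $Hx \cong H/H_x$ carries a finite $H$-invariant volume; as finite-volume homogeneous spaces of a simple Lie group have discrete isotropy, $H_x$ is a lattice. The metric $g$ then restricts to an $H$-invariant, hence left-invariant, Lorentzian metric on $H/H_x$, that is to an $\Ad(H_x)$-invariant symmetric bilinear form $\beta$ on $\mathfrak h$. By the Borel density theorem the lattice $H_x$ is Zariski-dense in $H$, so $\beta$ is in fact $\Ad(H)$-invariant; simplicity of $\mathfrak h$ forces $\beta = c\,B$ with $c \neq 0$. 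Finally $\beta$ is the restriction of $g_x$ to a subspace, so its negative index is at most $1$; since a negative $c$ would give negative index $\dim\p \ge 2$, one has $c>0$ and $\dim\mathfrak k \le 1$, whence $\mathfrak h \cong \mathfrak{sl}(2,\R)$.

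The main obstacle is exactly the almost-everywhere discreteness and nondegeneracy claim in the measurable category. Orbits of a non-proper isometric action need not be locally closed, so the disintegration of $\mu$ over orbits and the identification of the conditional measures with Haar measure on $H/H_x$ require care; this is where the finite-volume hypothesis must be combined with properness and recurrence arguments (Moore ergodicity and essential freeness of the Stuck--Zimmer type) together with the faithfulness of the isotropy representation $H_x \hookrightarrow \O(1,n-1)$. I would also treat the degenerate alternative $c=0$ separately: then the generic orbit is totally isotropic, hence at most $1$-dimensional because a Lorentzian vector space contains no isotropic plane, so $\mathfrak h_x$ has codimension $\le 1$ in $\mathfrak h$; as a simple Lie algebra of dimension $>3$ admits no codimension-one subalgebra, this too forces $\mathfrak h \cong \mathfrak{sl}(2,\R)$.
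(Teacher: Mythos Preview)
The paper does not prove this theorem: it is stated as a cited result of Zimmer (\cite{zimmer86}) in the introduction, with no argument given. There is therefore nothing in the paper to compare your proposal against.

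That said, a brief remark on your sketch. The superrigidity step and the final Killing-form argument are in the right spirit, but the ``decisive step'' is where the real difficulty lies, and you acknowledge this yourself. The claim that $\mu$-almost every stabilizer $H_x$ is a lattice is not a formality: it amounts to essential freeness of the action, and results of Stuck--Zimmer type postdate Zimmer's 1986 paper by several years. Zimmer's original argument does not go through orbits at all; it uses (a precursor of) the embedding theorem that the present paper invokes in Proposition~\ref{prop:zimmer_conformal}: for an \emph{isometric} action the relevant cocycle lands in $\O(1,n-1)$ rather than $\CO(1,n-1)$, and Borel density forces, at a generic point, the adjoint action of $H$ on $\h$ to preserve the sub-Lorentzian form $q_x$ up to \emph{no} conformal factor. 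One then reads off directly that $\h$ carries an $\ad$-invariant Lorentzian form, hence $\dim\mathfrak{k}=1$. Your route would eventually reach the same endpoint, but only after supplying the a.e.\ discreteness of stabilizers, which is a substantial theorem in its own right.
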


A compact Lorentz manifold admitting an isometric and faithful action of $H \simeq_{\text{loc}} \PSL(2,\R)$ can be easily built. Let $g_K$ be the Killing metric of $H$. It has Lorentz signature and is invariant under left and right translations of $H$ on itself. Choose $\Gamma < H$ any uniform lattice, and set $M = H / \Gamma$. The action of $\Gamma$ on $(H,g_K)$ by right translations being isometric, $g_K$ induces a Lorentz metric $g$ on $M$. Moreover, the action of $H$ on itself by left translations is also isometric for $g_K$ and centralizes the right translations. Therefore, $H$ acts on $M = H / \Gamma$ by isometries of $g$.

Moreover, the geometry of a manifold admitting an inessential action of such Lie groups $H$ was described by Gromov (\cite{gromov}, 5.4.A). He proved that if  $(M,g)$ has finite volume and if $H < \Isom(M,g)$, then $H$ acts \textit{locally freely} everywhere, with Lorentzian orbits. Then, considering the (Riemannian) orthogonal of these orbits, he concluded that some isometric cover of $M$ is a warped product $H \! \! ~_{\omega} \! \! \times N$, where $H$ is endowed with its Killing metric, $N$ is a Riemannian manifold and $\omega : N \rightarrow \R_{>0}$. Moreover, the $H$-action on $M$ can be lifted to the isometric action of $H$ on $H \! \! ~_{\omega} \! \! \times N$ given by left translations on the first factor and trivial on $N$.

\vspace*{0.2cm}

Thus, the situation is well described for inessential action. We now come to our main result.

\begin{thm}
\label{thm:main}
Let $(M^n,g)$, $n \geq 3$, be a real-analytic compact connected Lorentz manifold admitting a faithful conformal action of a connected Lie group locally isomorphic to $\PSL(2,\R)$. Then,
\begin{itemize}
\item either this action is inessential, \textit{i.e.} $\exists g_0 \in [g]$ such that $H$ acts by isometries of $g_0$ ;
\item or $(M,g)$ is conformally flat.
\end{itemize}
\end{thm}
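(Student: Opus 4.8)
The plan is to pass from the conformal class to its canonical normal Cartan geometry. Since $n\geq 3$, the conformal structure $[g]$ determines a parabolic Cartan geometry $(\hat M\to M,\ \omega)$ modelled on the Einstein universe $\Ein^{1,n-1}=\PO(2,n)/P$, where $P$ is the stabiliser of an isotropic line and carries the Levi decomposition $P=(\R_{>0}\times\O(1,n-1))\ltimes\R^n$. As $g$ is real-analytic, so is $\omega$, and conformal flatness of $(M,g)$ is equivalent to the vanishing of the curvature $\kappa:\hat M\to\Lambda^2(\g/\p)^*\otimes\g$ of $\omega$, whose fundamental (harmonic) component is the Weyl tensor when $n\geq 4$ and the Cotton tensor when $n=3$. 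Because $\kappa$ is real-analytic, it suffices to prove $\kappa\equiv 0$ on a single nonempty open set: the identity principle for analytic sections then propagates the vanishing over the connected manifold $M$. The hypothesis provides an injection of $\h\simeq\sl(2,\R)$ into the Lie algebra of conformal Killing fields, and each such field lifts canonically to an $\omega$-preserving vector field on $\hat M$; thus $H$ acts by automorphisms of the analytic $A$-rigid structure $\omega$.

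Next I would organise the argument around the dichotomy of the statement. If $H$ preserves a metric in $[g]$ the action is inessential and we are in the first alternative, with nothing further to prove; so assume henceforth that the action is \emph{essential}. The goal becomes to produce a point at which the isotropy of the conformal symmetries is large enough to annihilate the curvature. I would first extract such isotropy from the dynamics: essentiality of the $\PSL(2,\R)$-action on the \emph{compact} manifold $M$, applied to the $\R$-split one-parameter subgroup $a_t=\exp(tZ)$ of an $\sl_2$-triple $(X,Y,Z)$, forces—by a Frances-type analysis of the local conformal dynamics near a recurrent point—a point $\hat x_0\in\hat M$ and an element $\xi\in\p$ with nonzero grading component that fixes $\hat x_0$. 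Equivalently, there is a local conformal Killing field vanishing at $x_0$ whose holonomy $\exp(t\xi)\in P$ is non-precompact.

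The core step is to upgrade this single infinitesimal symmetry into an open-set statement by means of Gromov's theorem on local automorphisms of real-analytic $A$-rigid geometric structures. On the integrability locus $U\subseteq M$—an open dense $H$-invariant set—the algebra $\Kill^{\mathrm{loc}}(x)$ of germs of local conformal Killing fields has locally constant dimension, its isotropy subalgebra at $x$ maps into $\p$ via the isotropy representation, and every element of this isotropy preserves the value $\kappa(\hat x)$ of the curvature. Choosing a point of $U$ whose isotropy contains the generator $\xi$ above, $P$-equivariance of $\kappa$ yields that $\kappa(\hat x)$ is fixed by the whole one-parameter group $\exp(t\xi)$ acting linearly on $\Lambda^2(\g/\p)^*\otimes\g$. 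A direct weight computation then shows that, once $\xi$ has nonzero grading component, $\ad(\xi)$ acts with strictly positive weights on the subspace $\Lambda^2\g_{-1}^*\otimes\g_0$ carrying the harmonic part of $\kappa$, so the only invariant value is $0$. Hence $\kappa$ vanishes on a nonempty open subset of $U$, and by real-analyticity $\kappa\equiv 0$, i.e. $(M,g)$ is conformally flat.

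The main obstacle is the middle step: turning the bare hypothesis of essentiality into a point carrying \emph{unbounded} isotropy, with nonzero grading component, that moreover lies inside the integrability locus. This demands combining (i) the dynamical fact that essential conformal dynamics on a compact manifold yields a fixed point with expanding holonomy in $P$, (ii) Gromov's integration result to promote one infinitesimal symmetry at a singular point into a genuine local Killing algebra, and (iii) a verification that such a point can be selected in the open dense set $U$. The rank-one structure of $\sl(2,\R)$ enters crucially, both in furnishing the split element $Z$ and in constraining the admissible isotropy representations, while the danger to control is that the holonomy $\exp(t\xi)$ be purely unipotent with no scaling part, in which case the positive-weight argument fails and a separate analysis of the nilpotent radical of $\p$ is needed. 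Finally, the case $n=3$, where the obstruction is the Cotton rather than the Weyl tensor, requires the analogous but distinct bookkeeping.
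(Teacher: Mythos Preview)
Your outline has the right architecture (normal Cartan geometry, Gromov-type integration of infinitesimal isotropy, analytic propagation), but the two steps you flag as obstacles are genuinely unresolved, and one of your key technical claims is incorrect.

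\textbf{First gap: the weight argument fails.} You assert that once $\xi$ has ``nonzero grading component'', $\ad(\xi)$ acts with strictly positive weights on the harmonic curvature module, forcing $\kappa(\hat x)=0$. This is not what happens. In the paper the element $A\in\a\subset\p$ that one can actually produce acts on $\g/\p$ with eigenvalues $0,-1,\dots,-1,-2$ (equation~(\ref{equ:allure_flot})): there is an \emph{invariant} direction, corresponding to the light-like $X_{x_0}$ tangent to the degenerate $2$-orbit. Consequently $\ad(A)$ does \emph{not} have strictly positive weights on $\Lambda^2(\g/\p)^*\otimes\g$, and the naive invariance argument does not kill the Weyl tensor. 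The paper must instead carry out a careful dynamical analysis (Lemmas~\ref{lem:signature_hyperplane}--\ref{lem:two_hyperplanes} and the two Corollaries following them): one first shows $W$ vanishes on a degenerate hyperplane and has image in that hyperplane, then brings in the isotropy of the \emph{original} $H$-action (the flow of $Y$) to get a second hyperplane, and finally uses the algebraic symmetries of $W$ to conclude $W_{x_0}=0$. Only after that does the contraction $\phi_{A^*}^t$ propagate $W=0$ over a neighbourhood. Your proposal skips all of this.

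\textbf{Second gap: producing and locating the point.} Your ``Frances-type analysis near a recurrent point'' of $\exp(tZ)$ is not a substitute for what the paper does. The paper first characterises essentiality as the existence of non-locally-free orbits (Proposition~\ref{prop:sl2_inessential}), then isolates a closed $H$-invariant set $F$ consisting of $1$- and $2$-dimensional orbits and free of fixed points (Proposition~\ref{prop:no_fixed_point}), and finally applies a Borel-density/embedding argument (Proposition~\ref{prop:zimmer_conformal}) to the affine subgroup $S$ to pin down a point $x_0\in F$ with $\h_{x_0}=\R Y$ and a degenerate orbit. The holonomy $A$ is then extracted from $P^{\hat x_0}$ by an algebraic construction (Jordan decomposition inside $P$, conjugation into the Cartan subspace), not from recurrence dynamics; this is precisely how one rules out the purely unipotent case you worry about. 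Moreover, your point (iii) --- moving the point into the integrability locus --- is not addressed, and in fact \emph{cannot} be addressed as you phrase it: in the smooth theory the integrability locus is only open dense, and nothing guarantees $x_0$ lies in it. The paper's resolution is that in the \emph{real-analytic} category Gromov/Melnick's Frobenius theorem (Theorem~\ref{thm:frobenius}) holds at \emph{every} point of $\hat M$, so one may integrate the Killing generator $A$ at $\hat x_0$ directly. This is the essential use of analyticity in the argument, not merely the final unique-continuation step.
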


At first sight, this result could appear restrictive since it is formulated for a precise Lie group action. However, (local) copies of $\PSL(2,\R)$ sit in every non-compact simple Lie group. So, a conformal essential action of a semi-simple Lie group without compact factor always yields conformal actions of Lie groups locally isomorphic to $\PSL(2,\R)$. The question will only be to determine if these Lie subgroups are essential, what will not be a difficult problem.

\begin{cor}
\label{cor:ss_conformally_flat}
Let $(M^n,g)$, $n \geq 3$, be a real-analytic compact connected Lorentz manifold. If a connected semi-simple Lie group acts faithfully, conformally and essentially on $M$, then $(M,g)$ is conformally flat.
\end{cor}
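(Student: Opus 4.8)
The plan is to deduce Corollary \ref{cor:ss_conformally_flat} from the $\PSL(2,\R)$-case of Theorem \ref{thm:main}; the only genuine work is to locate inside the acting group a subgroup locally isomorphic to $\PSL(2,\R)$ whose action is \emph{essential}, after which conformal flatness is immediate.

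First I would record the elementary fact that a relatively compact group of conformal transformations is inessential: if $K$ is compact and $g_1 \in [g]$, then $k^* g_1 = \e^{2\sigma_k} g_1$ for each $k \in K$, so $g_0 := \int_K k^* g_1 \, \d k = (\int_K \e^{2\sigma_k}\,\d k)\, g_1$ is a $K$-invariant metric still lying in $[g]$. Write the acting group $S$ as the almost direct product $S = S_c \cdot S'$ of its compact simple factors $S_c$ and its non-compact ones $S'$, and recall that distinct simple factors commute. If $S'$ preserved some $g' \in [g]$, averaging $g'$ over $S_c$ would yield $g_0 \in [g]$ invariant under $S_c$, hence also under $S'$ since $S_c$ commutes with $S'$, making $S$ inessential. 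Thus the essentiality of $S$ forces $S'$ to be essential, and in particular non-trivial: $S$ has a non-compact simple factor.

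Next I would exploit the real-rank dichotomy. Since $S'$ is semi-simple without compact factor and acts conformally on a Lorentz manifold, Zimmer's bound (\cite{zimmer87}) gives $\Rk_\R S' \leq \min(1,n-1)+1 = 2$. If $\Rk_\R S' = 2$ we are in the extremal case of \cite{bader_nevo} and \cite{frances_zeghib}, where $(M,g)$ is already known to be conformally flat, and there is nothing left to prove. Otherwise $\Rk_\R S' = 1$; as the real rank of a semi-simple group is the sum of the ranks of its simple factors and each non-compact factor contributes at least $1$, this forces $S'$ to be \emph{simple} of real rank one, and such a group contains a closed subgroup $L$ locally isomorphic to $\PSL(2,\R)$, obtained by integrating an $\sl(2,\R)$-triple attached to a restricted root.

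The hard part is to guarantee that $L$ can be taken essential, and I expect this to be the crux. When $S' \simeq_{\text{loc}} \PSL(2,\R)$ one simply takes $L = S'$, essential by the first step, and Theorem \ref{thm:main} finishes the proof. The delicate possibility is that $S'$ is a rank-one simple group strictly larger than $\PSL(2,\R)$: essentiality is \emph{not} inherited by subgroups, since the set of $L$-invariant metrics in $[g]$ may be strictly larger than the (empty) set of $S'$-invariant ones, so the chosen $L$ could a priori be inessential. To treat this I would argue by contradiction: assuming $(M,g)$ not conformally flat, Theorem \ref{thm:main} makes every faithful conformal $\PSL(2,\R)$-action inessential, hence isometric for some $g_L \in [g]$, and Gromov's description of isometric $\PSL(2,\R)$-actions (quoted above) forces a rigid warped-product structure with locally free Lorentzian orbits; the aim is to show that this rigidity is incompatible with $L$ sitting inside the essential group $S'$, by upgrading the $L$-invariant metric to an $S'$-invariant one and contradicting the first step. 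A cleaner way to dispose of this case would be to invoke the classification of \cite{article2}, which should rule out non-compact simple groups other than $\PSL(2,\R)$ (and the rank-two groups already handled) in Lorentz signature. In either route one produces an essential $L \simeq_{\text{loc}} \PSL(2,\R)$ and concludes, via Theorem \ref{thm:main}, that $(M,g)$ is conformally flat.
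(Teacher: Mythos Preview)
Your overall strategy is sound, but two things are worth flagging, one of which is a real problem.

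First, route (b) does not work. The classification in \cite{article2} does \emph{not} exclude rank-one simple groups strictly larger than $\PSL(2,\R)$: groups such as $\SO_0(1,k)$ for $k\geq 3$ genuinely act conformally on compact Lorentz manifolds (the Hopf-manifold example in Section~\ref{ss:examples} already shows this). So you cannot dispose of the ``$S'$ strictly larger'' case by appeal to that classification; you must actually run the upgrade argument of route (a).

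Second, the paper carries out precisely that upgrade, but more directly and without your rank dichotomy or the external rank-two results. The key is Lemma~\ref{lem:semi_simple_essential}: if $\h_1\not\simeq\sl(2,\R)$ then \emph{every} $\sl(2)$-subgroup of $H_1$ is essential. The argument is short and worth internalising. Suppose some $\sl(2)$-triple $(X,Y,Z)\subset\h_1$ generates an inessential subgroup. By the proof of Proposition~\ref{prop:sl2_inessential}, $X$ is everywhere space-like and, after rescaling by $g(X,X)$, the metric is preserved by anything commuting with $X$ (Lemma~\ref{lem:centralizer_inessential}). Now $X$ lies in a Cartan subspace $\a\subset\h_1$, and for each restricted root $\alpha$ one either has $\alpha(X)=0$ (so $\h_\alpha$ centralises $X$ and is Killing) or $\alpha(X)\neq 0$ (so the pair $(X/\alpha(X),Y_\alpha)$ satisfies $[X,Y_\alpha]=Y_\alpha$ and the same computation as for $(X,Y)$ shows $Y_\alpha$ is Killing). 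Hence all of $H_1$ is isometric, and Zimmer's theorem forces $\h_1\simeq\sl(2,\R)$, a contradiction. This is the ``upgrade the $L$-invariant metric to an $S'$-invariant one'' you were looking for; note it uses only the space-like vector field and the root-space decomposition, not Gromov's warped-product description. With this lemma in hand your rank dichotomy and the appeal to \cite{bader_nevo}, \cite{frances_zeghib} become superfluous: the case $\h_1\not\simeq\sl(2,\R)$ is treated uniformly, and the case $\h_1\simeq\sl(2,\R)$ is Theorem~\ref{thm:main} directly.
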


Remark that any conformal action of a compact Lie group $H$ on a pseudo-Riemannian manifold $(M,g)$ is inessential. Indeed, $H$ admits a finite Haar measure $\mu$ and the metric $g' := \int_H h^* \! g \, \d \mu (h)$ is $H$-invariant. Thus, Corollary \ref{cor:ss_conformally_flat} has an interest for non-compact semi-simple Lie groups. In this case, essentiality is characterized by the following property, valid in smooth regularity.

\begin{prop}
\label{prop:semi_simple_inessential}
Let $H$ be a non-compact, connected, semi-simple Lie group acting faithfully and conformally on a compact connected Lorentz manifold $(M^n,g)$, $n \geq 3$. Let $\h_1 \subset \h$ be the sum of the non-compact simple ideals of $\h$ and let $H_1 < H$ be the corresponding connected Lie subgroup. Then, $H$ is inessential if and only if $\h_1 \simeq \sl(2,\R)$ and $H_1$ acts locally freely on $M$.
\end{prop}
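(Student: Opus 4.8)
My plan is to prove the two implications separately; the direct one is short, while the converse carries the real content.

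For the direct implication, suppose $H$ is inessential and fix $g_0 \in [g]$ for which $H$ acts isometrically. As $M$ is compact, $(M,g_0)$ has finite volume, and $H_1$ is a semi-simple subgroup \emph{without} compact factor acting faithfully (being a subgroup of the faithful $H$) and isometrically. Zimmer's theorem (quoted above) then forces $H_1$ to be locally isomorphic to $\PSL(2,\R)$, that is $\h_1 \simeq \sl(2,\R)$. Gromov's description of such finite-volume isometric actions, also recalled above, gives at once that $H_1$ acts locally freely. This direction thus reduces entirely to the two cited results.

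For the converse I assume $\h_1 \simeq \sl(2,\R)$ and that $H_1$ acts locally freely, and I must produce $g_0 \in [g]$ invariant under all of $H$. For $x \in M$ consider the symmetric bilinear form $B_x$ on $\sl(2,\R)$ given by $B_x(X,Y) = g_x(X^*_x, Y^*_x)$, where $X^*$ is the vector field induced by $X$; local freeness makes $X \mapsto X^*_x$ a linear isomorphism onto $T_x\mathcal O_x$, so $B_x$ is the restriction $g_x|_{T_x\mathcal O_x}$ read on $\sl(2,\R)$. Differentiating the conformal relation $g_{hx}(D_xh\,\cdot,D_xh\,\cdot) = \lambda(h,x)\,g_x(\cdot,\cdot)$ shows that $x \mapsto [B_x]$ is a continuous $H_1$-equivariant map $\Phi$ from $M$ to $\mathbf P(\mathrm{Sym}^2\sl(2,\R)^{*})$, the target carrying the $\PSL(2,\R)$-action induced by $\mathrm{Ad}$. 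The whole converse follows once I show $\Phi$ is constant equal to the class $[\kappa]$ of the Killing form.

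The key point is the decomposition $\mathrm{Sym}^2\sl(2,\R)^{*} \cong V_1 \oplus V_5$ into the trivial line $\R\kappa$ and an irreducible $5$-dimensional factor, so that $[\kappa]$ is the \emph{unique} $\PSL(2,\R)$-fixed point of $\mathbf P(\mathrm{Sym}^2\sl(2,\R)^{*})$. I would exclude the other types of $B_x$ using the hyperbolic dynamics of an $\R$-split one-parameter subgroup $a_t$ together with compactness of $M$: for any $x$, some $a_{t_n}x$ converges, and the conformal relation exhibits $\mathrm{Ad}(a_{t_n})$ as a $\sqrt{\lambda(a_{t_n},x)}$-conformal map $(\sl(2,\R),B_x)\to(\sl(2,\R),B_{a_{t_n}x})$, the targets converging, while $\mathrm{Ad}(a_{t_n})$ has eigenvalues $e^{\pm 2t_n},1$. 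If $B_x$ were definite its orthogonal group would be compact and the unbounded eigenvalues could not be reabsorbed by a single scalar, a contradiction; the expected Lorentzian case $B_x = \sigma\kappa$ is consistent precisely because $\mathrm{Ad}(a_t) \in \mathrm{SO}(\kappa)$. The main obstacle is the remaining degenerate case: when $B_x$ has a non-trivial radical the eigenvalue blow-up can be absorbed by shearing that radical, so ruling out degenerate orbits requires a finer analysis of which degenerate configurations of $\mathbf P(\mathrm{Sym}^2\sl(2,\R)^{*})$ are compatible with the recurrence of the flow on the compact $M$ and with local freeness. I expect this to be the technical heart, forcing $\Phi \equiv [\kappa]$ and hence every orbit to be non-degenerate of conformal type that of the Killing form.

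It then remains to assemble the invariant metric. Since $\h_1$ is an ideal, $H_1$ is normal and $H$ permutes the $H_1$-orbits, so the foliation $\mathcal F$ by orbits is $H$-invariant and $TM = T\mathcal F \oplus (T\mathcal F)^{\perp_g}$ with $T\mathcal F$ Lorentzian and the complement Riemannian. Transporting $\kappa$ through $X \mapsto X^*_x$ defines a smooth field $\kappa_{\mathcal F}$ on $T\mathcal F$, which is $H$-invariant because $\kappa$ is $\mathrm{Ad}$-invariant under $H_1$ and because the compact simple ideals commute with $\h_1$, forcing the compact factors to act on each orbit as the identity on $\sl(2,\R)$. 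Having shown $[B_x]=[\kappa]$, there is a unique positive $\sigma$ with $g|_{T\mathcal F} = \sigma\,\kappa_{\mathcal F}$, and I set $g_0 := \sigma^{-1}g \in [g]$, so $g_0|_{T\mathcal F} = \kappa_{\mathcal F}$. For $h \in H$ one has $h^*g_0 = \rho\,g_0$ with $\rho > 0$; restricting to the non-degenerate $T\mathcal F$, where both sides equal the $H$-invariant $\kappa_{\mathcal F}$, forces $\rho \equiv 1$. Hence $g_0$ is $H$-invariant and $H$ is inessential.
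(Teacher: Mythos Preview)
Your forward implication is correct and coincides with the paper's: Zimmer's theorem forces $\h_1\simeq\sl(2,\R)$, and Gromov's description of such isometric actions gives local freeness.

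For the converse you have a genuine gap. You explicitly leave the degenerate case of $B_x$ unresolved, writing only that you ``expect'' the recurrence analysis to rule it out. This is precisely the substantive step: nothing in your outline excludes the possibility that some $H_1$-orbit is light-like degenerate, and your heuristic for the definite case (``unbounded eigenvalues cannot be reabsorbed by a single scalar'') is also not yet a proof, since along a recurrent sequence $a_{t_n}x\to y$ the limiting form $B_y$ could itself be degenerate and the conformal factor $\lambda_n$ is uncontrolled. Without this step you have not shown $\Phi\equiv[\kappa]$, and the rest of the argument does not start.

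The paper closes this gap with a different mechanism. Instead of a pointwise recurrence argument, it invokes a Zimmer-type embedding result of Bader--Frances--Melnick (Proposition~\ref{prop:zimmer_conformal}): for the amenable subgroup $S=\{\e^{tX}\}$, Borel's density theorem produces, in \emph{every} closed $S$-invariant subset, a point $x$ at which $\Ad_{\h}(\e^{tX})$ acts conformally on $(\h,q_x)$ for all $t$. A direct computation with the $\sl(2)$-triple (Lemma~\ref{lem:signature_zimmer_point}) then shows that at such a point $Y_x,Z_x$ are isotropic, $X_x$ is space-like, and the orbit is Lorentzian. Applying this to the closed $S$-invariant set $\{g(X,X)\leq 0\}$ shows it is empty; hence $X$ is everywhere space-like. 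One then rescales $g$ by $g(X,X)$: Obata's observation (Lemma~\ref{lem:centralizer_inessential}) makes every conformal transformation commuting with $X$ isometric, which handles $\h_2$ in one stroke, and the bracket relations $[X,Y]=Y$, $[X,Z]=-Z$ combined with compactness force the flows of $Y$ and $Z$ to be isometric as well. Your assembly via $\kappa_{\mathcal F}$ would work once the gap is filled, but the paper's route through the space-like vector field $X$ and Lemma~\ref{lem:centralizer_inessential} is shorter and avoids the foliation machinery entirely.
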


Remark that Proposition \ref{prop:semi_simple_inessential} implies that a conformal action of a Lie group locally isomorphic to $\PSL(2,\R)$ is inessential if and only if it is everywhere locally free.

\vspace*{0.2cm}

The question of conformal flatness for \textit{smooth} compact Lorentz manifolds admitting an essential action of a Lie group locally isomorphic to $\PSL(2,\R)$ is still open. Here, we take advantage of the analytic regularity to present a relatively short and efficient proof. Using a general theory of Gromov on real-analytic $A$-rigid geometric structures, we exhibit a local conformal flow, which \textit{does not come from the action} of the Lie group, and we deduce the conformal flatness from its dynamic. The existence of this flow can be compared to the statement of Gromov's celebrated centralizer theorem (\cite{gromov}, see also \cite{feres02} Cor. 7.5, \cite{morris_zimmer} 5.A).

If we suspect that Theorem \ref{thm:main} is in fact a smooth result, the proof that is given here seems to be a ``real-analytic proof'', in the sense that one encounters fundamental obstacles, which are inherent to Gromov's theory, when trying to generalize it to smooth Lorentz manifolds.

\subsection*{Organization of the proof}

The proof of Theorem \ref{thm:main} is the main thread of this article, and is structured as follows. We start by proving that inessential actions of Lie groups locally isomorphic to $\PSL(2,\R)$ are characterized by the fact that they are everywhere locally free, and at the same time we prove Proposition \ref{prop:semi_simple_inessential} and reduce the proof of Corollary \ref{cor:ss_conformally_flat} to the one of Theorem \ref{thm:main}. 

Once we know that essential actions are characterized by the existence of orbits with \textit{small dimension}, we prove in Section \ref{s:fixed_points} that, moreover, there always exists a closed invariant subset in which every orbit has dimension $1$ or $2$. Until that time, no analyticity assumption is necessary.

\vspace*{0.2cm}

When the manifold contains a $1$-dimensional orbit, anterior results will quickly give the conformal flatness (it is done in Section \ref{ss:orbit_dim1}). Thus, we will assume that there exists a closed invariant subset that exclusively contains orbits of dimension $2$. 

It is at this moment that the analyticity hypothesis becomes crucial. Conformal structures in dimension at least $3$ are rigid in Gromov's sense (\cite{gromov}), so that its strong theory on the behaviour of local isometries applies in our context. The main step in its theory is an integrability result he called ``Frobenius theorem''. Without going into details, the question is to see when an ``isometric $r$-jet'' between two points $x$ and $y$, which can be thought as an infinitesimal map only defined at $x$, can be extended into a local isometry between neighbourhoods of $x$ and $y$. For compact real-analytic rigid structures, Gromov proved that isometric $r$-jets always give rise to local isometries, but for smooth structures, the result is true only in an \textit{open and dense subset} of the manifold (\cite{gromov} 3.3, 3.4, \cite{benoist} p.14).

In our situation, we will use this integrability result at a precise point $x_0$ in the invariant closed subset we have exhibited. We will not use its original version but a more recent formulation due to Melnick in the setting of \textit{Cartan geometries} (\cite{melnick}, \cite{article1}). It will provide a local conformal vector field $A^*$ defined near $x_0$, thanks to the analyticity of the Lorentz manifold. If the manifold was only assumed to be smooth, the point $x_0$ we are interested in could unfortunately be outside the above mentioned open dense subset and we could not apply this Frobenius theorem. The existence and the description of this conformal vector field are postponed to Section \ref{s:proof_local_flow}.

Admitting temporarily the existence of $A^*$, we will perform the proof of Theorem \ref{thm:main} in Section \ref{s:conformal_flatness}. We will establish that the Weyl-Cotton tensor vanishes on a neighbourhood of $x_0$ by considering the dynamic of the flow $\phi_{A^*}^t$. The conformal flatness of the whole manifold will directly follow from its connectedness and analyticity.

\subsection*{Conventions and notations}

In this paper, ``manifold'' means a differentiable manifold. By default, the regularity is assumed to be smooth and we will precise when the analyticity is required. As usual, we will use the fraktur font to denote the Lie algebra of a Lie group. If $M$ is a manifold, we note $\mathfrak{X}(M)$ the Lie algebra of vector fields defined on $M$. We note $\Kill(M,[g])$ the Lie algebra of \textit{conformal (Killing) vector fields} of $M$, \textit{i.e.} infinitesimal generators of conformal diffeomorphisms. If $\dim M \geq 3$, then $\Kill(M,[g])$ is finite dimensional. We call \textit{$\sl(2)$-triple} of a Lie algebra any non-zero triple $(X,Y,Z)$ in this Lie algebra satisfying the relations $[X,Y] = Y$, $[X,Z] = -Z$ and $[Y,Z] = X$.

\vspace*{0.2cm}

\hspace*{-0.55cm} \textbf{Fixed global notations.} In all this paper, $M^n$ is a compact connected manifold of dimension $n \geq 3$, endowed with a Lorentz metric $g$, and $H$ is a connected Lie group that acts faithfully conformally on $(M,g)$. The $H$-action gives rise to an \textit{infinitesimal action} of $\h$, \textit{i.e.} an embedding $\h \hookrightarrow \Kill(M,[g])$ into the Lie algebra of conformal vector fields given by $X \mapsto (\frac{\d}{\d t}|_{t=0} \, \e^{-tX} \! \! .x)_{x\in M}$. We will identify $\h$ with its image. For all $x \in M$, the notation $\h_x$ refers to the Lie algebra of the stabilizer of $x$, $\h_x = \{X \in \h, \ X_x = 0\}$.

\vspace*{0.4cm}

\hspace*{-0.55cm} \textbf{Acknowledgement.} \textit{This work has been done during my PhD and I would like to deeply thank my advisor, Charles Frances, for his constant support.}

\section{Inessential actions of $\PSL(2,\R)$}
\label{s:inessential}

The theorem of Zimmer cited in the introduction shows that if a semi-simple Lie group without compact factor acts conformally inessentially on a compact Lorentz manifold, then it is locally isomorphic to $\PSL(2,\R)$. We also recalled a result of Gromov that moreover shows that its action is everywhere locally free. The main purpose of this section is to establish that conversely, locally free conformal actions of such groups on compact Lorentz manifolds are inessential.

\begin{prop}
\label{prop:sl2_inessential}
Let $(M^n,g)$, $n \geq 3$, be a compact connected Lorentz manifold that admits a faithful conformal action of a connected Lie group $H$ locally isomorphic to $\PSL(2,\R)$. If this action is everywhere locally free, then it is inessential, \textit{i.e.} there exists $g'$ conformal to $g$ such that $H < \Isom(M,g')$.
\end{prop}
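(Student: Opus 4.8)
The plan is to produce, directly, a metric $g'\in[g]$ invariant under $H$; by the very definition of inessentiality this finishes the proof. Since the action is everywhere locally free, $\h_x=0$ for every $x$, so for any $\sl(2)$-triple $(X,Y,Z)$ of $\h$ the values $X_x,Y_x,Z_x$ are linearly independent and span the tangent space $T_x(H.x)$ to the orbit. The guiding idea is that the conformal distortion is entirely encoded in the way $g$ measures this canonical frame, and that the resulting conformal factor is killed by a determinant precisely because $H$ is unimodular.

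Concretely, I would first form the trivector field $\Xi:=X\wedge Y\wedge Z$ in $\Gamma(\Lambda^3 TM)$, which is nowhere vanishing by local freeness. Under the identification $\h\hookrightarrow\Kill(M,[g])$ the group acts on $\h$ by the adjoint representation, i.e. $h_*X=\Ad(h)X$ as conformal fields, whence $h_*\Xi=\det(\Ad(h))\,\Xi$. Because $H$ is locally isomorphic to $\PSL(2,\R)$ it is semisimple, so $\Tr(\ad\xi)=0$ for all $\xi\in\sl(2,\R)$ and $\det\Ad\equiv 1$ on the connected group $H$; hence $\Xi$ is $H$-invariant. Next I would consider the Gram function $W:=\det\big(g(\,\cdot\,,\cdot\,)\big)_{X,Y,Z}=g^{\Lambda^3}(\Xi,\Xi)$, the induced inner product of $\Xi$ with itself. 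Writing the conformal distortion as $\theta_h^*g=\lambda_h^2\,g$ with $\theta_h(x)=h.x$ and $\lambda_h\in C^\infty(M,\R_{>0})$, the fact that the induced inner product on $\Lambda^3TM$ scales by $\lambda_h^{6}$, together with the invariance of $\Xi$, gives $W\circ\theta_h=\lambda_h^{6}\,W$. Consequently, \emph{provided $W$ never vanishes}, the metric $g':=|W|^{-1/3}g$ lies in $[g]$ and satisfies $\theta_h^*g'=(|W|^{-1/3}\circ\theta_h)\,\lambda_h^2\,g=\lambda_h^{-2}|W|^{-1/3}\lambda_h^2\,g=g'$ for every $h\in H$. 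Thus $H<\Isom(M,g')$ and the action is inessential.

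The whole argument therefore rests on the non-degeneracy of the orbits, that is on $W\neq 0$ everywhere, and I expect this to be the main obstacle. The bad set $\{W=0\}$ is closed and $H$-invariant, and is a union of $3$-dimensional orbits. On it the restriction of $g$ to the orbit tangent spaces is degenerate; since the ambient metric is Lorentzian, a totally isotropic subspace has dimension at most $1$, so the radical of $g|_{T(H.x)}$ is a line, yielding a conformally invariant $g$-null line field tangent to these orbits. To rule this out I would exploit the dynamics of the $\R$-split one-parameter subgroup $a_t=\exp(tX)$: its adjoint action expands $Y$ and contracts $Z$, so the functions $g(Y,Y)$, $g(Z,Z)$, $g(Y,Z)$ transform along the $a_t$-orbits by explicit factors involving $e^{\pm t}$ and $\lambda_{a_t}$. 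Using the compactness of $M$ both to keep these quantities bounded and to produce recurrent points, one controls the distortion $\lambda_{a_t}$ and forces the putative invariant null direction to be incompatible with the flow, contradicting the existence of $\{W=0\}$. Making this recurrence-and-distortion argument precise is the technical heart of the proof; once $W\neq 0$ is established, the construction above concludes.
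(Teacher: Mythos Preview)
Your construction of the invariant metric via the trivector $\Xi=X\wedge Y\wedge Z$ and the Gram determinant $W=g^{\Lambda^3}(\Xi,\Xi)$ is correct and clean: unimodularity of $H$ gives $h_*\Xi=\Xi$, and then $g'=|W|^{-1/3}g$ is $H$-invariant as soon as $W$ is nowhere zero. This is a genuinely different packaging from the paper's. The paper does not build $g'$ in one stroke; it first shows that $X$ is everywhere space-like, renormalizes by $g(X,X)$ so that $\phi_X^t$ becomes isometric (Obata's lemma), and then checks by hand, using $[X,Y]=Y$ and $[Y,X]=-Y$, that the flows of $Y$ and $Z$ are isometric for this rescaled metric. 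Your approach is more conceptual and immediately explains \emph{why} the rescaling works (unimodularity).

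Both routes, however, reduce to the same hard point: showing that every orbit is non-degenerate (equivalently, $W\neq 0$, equivalently the orbit is Lorentzian). Here the paper invokes a nontrivial external tool, namely the embedding theorem of Bader--Frances--Melnick (its Proposition~2.4): in any closed $\{\phi_X^t\}$-invariant set there is a point $x$ at which $\Ad(\e^{tX})$ acts conformally on $(\h,q_x)$, and an elementary analysis of that condition (Lemma~2.5) forces $Y_x,Z_x$ isotropic, $X_x$ space-like, and the orbit Lorentzian; applying this to $\{g(X,X)\leq 0\}$ (resp.\ to your $\{W=0\}$) shows the set is empty.

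Your sketch for this step (``recurrence and distortion'') is on the right track and can in fact replace the embedding theorem by a more elementary argument, but as written it is only a plan, not a proof. To complete it: take a recurrent point $x$ for $\phi_X^t$ in the (putatively nonempty) compact invariant set $\{W=0\}$, and use the relations $g_{\phi_X^t(x)}(Y,Y)=\e^{2t}\mu_t(x)g_x(Y,Y)$, $g_{\phi_X^t(x)}(Z,Z)=\e^{-2t}\mu_t(x)g_x(Z,Z)$, etc., along a sequence $t_k\to\infty$ with $\phi_X^{t_k}(x)\to x$. If $g_x(Y,Y)\neq 0$ one finds $\mu_{t_k}(x)\sim \e^{-2t_k}$, which forces $\Span(X_x,Z_x)$ to be a $2$-dimensional totally isotropic subspace, impossible in sub-Lorentzian signature; symmetrically $g_x(Z,Z)=0$. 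Then $g_x(Y,Z)\neq 0$ (else $\Span(Y_x,Z_x)$ is totally isotropic), hence $\mu_{t_k}(x)\to 1$, which in turn forces $g_x(X,Y)=g_x(X,Z)=0$; since $\Span(Y_x,Z_x)$ is Lorentzian and $X_x$ is orthogonal to it, $X_x$ is space-like and $W(x)\neq 0$, contradicting $x\in\{W=0\}$. This fills your gap and, incidentally, avoids the Cartan-geometry/Borel-density machinery the paper uses for this lemma.
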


The proof is based on a geometric property, derived from a generalization of Zimmer's embedding theorem, due to Bader, Frances and Melnick (\cite{bader_frances_melnick}). This property is valid for non-locally free actions, and will be reused later. We have isolated it in the following section.

\subsection{A geometric property of general conformal actions}

Let $H$ be a connected Lie group locally isomorphic to $\PSL(2,\R)$, acting conformally on the compact Lorentz manifold $(M,g)$. Let $\mathcal{O}_x = H.x$ be the $H$-orbit of a point $x \in M$. Identifying $\h / \h_x \simeq T_x \mathcal{O}_x$, let $q_x$ be the quadratic form on $\h / \h_x$ obtained by restricting the ambiant metric $g_x$ to $T_x \mathcal{O}_x$.

\vspace*{0.2cm}

\begin{prop}
\label{prop:zimmer_conformal}
Let $S < H$ be a proper, closed, connected subgroup such that $\Ad_{\h}(S) \subset \Ad_{\h}(H) \simeq \PSL(2,\R)$ is not an elliptic one-parameter subgroup. Then, every $S$-invariant closed subset of $M$ contains a point $x$ such that $\Ad_{\h}(S)\h_x \subset \h_x$ and such that the induced action of $\Ad_{\h}(S)$ on $\h / \h_x$ is conformal with respect to $q_x$.
\end{prop}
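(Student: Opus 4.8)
\emph{Strategy.} My plan is to read the conformal structure as a normal Cartan geometry and to reduce the proposition to producing a single point of the invariant set at which the infinitesimal ``Gauss map'' of the geometry becomes $\Ad_\h(S)$-equivariant with values in the parabolic subalgebra $\p$; the conformality of the induced action will then be automatic from the grading of $\g$. Concretely, I would fix the Cartan geometry $(B\to M,\omega)$ associated with $[g]$, modelled on $\Ein^{1,n-1}=G/P$ with $G=\PO(2,n)$ and $\g=\so(2,n)=\g_{-1}\oplus\g_0\oplus\g_1$, $\p=\g_0\oplus\g_1$, $\g_0=\co(1,n-1)$; recall that $\g/\p\cong\g_{-1}$ carries a canonical conformal class, on which $P$ acts through its Levi factor $\g_0=\co(1,n-1)$, hence \emph{conformally}. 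The $H$-action lifts to an action on $B$ preserving $\omega$, and for $\hat x\in B$ over $x$ I set $\iota_{\hat x}(X)=\omega_{\hat x}(X^\ast_{\hat x})\in\g$, where $X^\ast$ is the lift to $B$ of the conformal field $X\in\h$. This $\iota_{\hat x}$ is injective, satisfies $\iota_{\hat x}^{-1}(\p)=\h_x$, identifies $\h/\h_x$ with a subspace of $\g/\p\cong\g_{-1}$ pulling back $q_x$ to the ambient conformal form, and transforms equivariantly: $\iota_{\hat x\cdot p}=\Ad(p)^{-1}\circ\iota_{\hat x}$ and $\iota_{h\cdot\hat x}=\iota_{\hat x}\circ\Ad_\h(h)^{-1}$.

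\emph{Reduction.} The statement then reduces to finding a point $x$ in the given invariant closed set $F$, a point $\hat x\in B_x$, and a Lie algebra homomorphism $\rho:\s\to\p$ such that
\[
\iota_{\hat x}\circ\ad(X)=\ad(\rho(X))\circ\iota_{\hat x}\quad\text{on }\h,\ \forall X\in\s .
\]
Indeed, integrating this intertwining shows that $\Ad_\h(s)$ corresponds under $\iota_{\hat x}$ to $\Ad(\rho(s))$ with $\rho(s)\in P$: since $\Ad(\rho(s))$ preserves both $\iota_{\hat x}(\h)$ (because $\Ad_\h(s)\h=\h$) and $\p$, it preserves $\iota_{\hat x}(\h_x)=\iota_{\hat x}(\h)\cap\p$, whence $\Ad_\h(S)\h_x\subset\h_x$ by injectivity of $\iota_{\hat x}$; and since $P$ acts on $\g/\p$ conformally through $\co(1,n-1)$, the induced action of $\Ad_\h(S)$ on $\h/\h_x\hookrightarrow\g/\p$ preserves the conformal class of $q_x$. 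Both conclusions follow.

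\emph{Producing the point.} To obtain such $(x,\hat x,\rho)$ I would argue as follows. Since $\Ad_\h(S)$ is a proper connected subgroup of $\PSL(2,\R)$, it is compact or solvable, so $S$ is amenable; acting on the nonempty compact set $F$ it therefore preserves a probability measure $\mu$ with $\Supp\mu\subset F$. Writing the $S$-action on $B$ through a measurable section as a cocycle $\alpha:S\times M\to P$, the field $x\mapsto\iota_{\hat x}$ becomes $\alpha$-equivariant, and I would invoke the embedding theorem of Bader--Frances--Melnick (\cite{bader_frances_melnick}), the Cartan-geometric form of Zimmer's theorem, for the invariant measure $\mu$. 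This provides, at $\mu$-almost every point and thus at some $x\in\Supp\mu\subset F$, an admissible $\hat x$ together with the sought homomorphism $\rho$ into $\p$.

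\emph{Main obstacle.} The delicate step is exactly that $\rho$ can be taken with values in the parabolic $\p$, rather than in some larger subalgebra of $\g$: this is where the hypothesis that $\Ad_\h(S)$ is \emph{not} an elliptic one-parameter subgroup enters. It guarantees, through the nontrivial recurrence of a hyperbolic or unipotent element against $\mu$ (equivalently, through an algebraic-hull reduction of $\alpha$ landing in a parabolic of $G$), that the equivariant field of subspaces $\iota_{\hat x}(\h)$ is stabilized by a $\p$-valued representation. For an elliptic one-parameter $S$ this conclusion genuinely fails, since at generic points $\h_x$ need not be $\Ad_\h(S)$-invariant, which is why that case must be excluded; controlling this reduction and transferring the measurable conclusion to an honest point of $\Supp\mu\subset F$ is the heart of the proof.
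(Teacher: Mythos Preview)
Your proposal is correct and follows essentially the same route as the paper: pass to the normal Cartan geometry, use the map $\iota_{\hat x}:\h\to\g$, exploit amenability of $S$ to put an invariant probability on $F$, and invoke the Bader--Frances--Melnick/Borel density machinery to find a point where the $S$-action and the $P$-action on $\iota_{\hat x}$ match; the invariance of $\h_x$ and the conformality on $\h/\h_x$ then follow from $\iota_{\hat x}^{-1}(\p)=\h_x$ and the fact that $P$ acts conformally on $\g/\p$. The paper carries this out slightly more concretely by passing to the quotient variety $W=\bar P\backslash\Mon(\h,\g)$ and applying Furstenberg's version of Borel density there, rather than phrasing it as a cocycle reduction, but the substance is the same.

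One clarification is worth making about your ``Main obstacle'' paragraph. You locate the use of non-ellipticity in forcing $\rho$ to take values in $\p$ rather than in a larger subalgebra. In fact, once one has a point $x$ whose image in $W=\bar P\backslash\Mon(\h,\g)$ is $\bar S$-fixed, the matching element is automatically in $\bar P$ (hence essentially in $P$) by construction of the quotient; there is no ``larger subalgebra'' in play. The non-ellipticity hypothesis is used \emph{earlier}, to guarantee that such a fixed point exists: it ensures that the Zariski closure $\bar S\subset\GL(\h)$ is a hyperbolic or unipotent one-parameter group, or the affine group $\R^*\ltimes\R$, and hence admits no nontrivial algebraic cocompact subgroups --- exactly the input Borel density needs to conclude that $\psi_*\mu$-a.e.\ point of $W$ is $\bar S$-fixed. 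If $S$ were elliptic, $\bar S$ would be compact and this step would collapse.
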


To the Lorentzian conformal structure $(M, [g])$ corresponds what we call a \textit{normalized Cartan geometry} modeled on the Lorentzian Einstein universe. It is a geometric structure on $M$ such that any conformal object on $M$ translates into its framework, and in which problems of conformal geometry can be simpler. Although there is an intrinsic general theory of Cartan geometries, we have chosen to give a minimal presentation and we refer the reader to \cite{kobayashi}, \cite{sharpe} and \cite{cap_slovak} for a more complete approach. The proof of Proposition \ref{prop:zimmer_conformal} starts after this presentation.

\subsubsection{The associated Cartan geometry} 
\label{ss:cartan_geometry}
The definition of the Einstein universe of Lorentzian signature is recalled in Section \ref{ss:examples}. It is a  compact, conformally homogeneous, Lorentz manifold $\Ein^{1,n-1}$, with conformal group $G := PO(2,n)$. Thus, $G$ acts transitively on it so that, if $P$ denotes the isotropy at a given point, $\Ein^{1,n-1} \simeq G/P$ as $G$-homogeneous spaces. The idea is now to transpose the infinitesimal properties of this homogeneous space on an arbitrary manifold. In what follows, $G$ still denote $\PO(2,n)$ and $P < G$ the stabilizer of a (fixed) given point.

\vspace*{0.2cm}

The data of a conformal class $[g]$ of Lorentz metrics on $M$ canonically defines a geometric structure on $M$, called the associated normalized \textit{Cartan geometry} modeled on the Lorentzian Einstein's universe, which is the data of a $P$-principal fiber bundle $\pi : \hat{M} \rightarrow M$ and of a $\g$-valued $1$-form $\omega \in \Omega^1(\hat{M},\g)$ satisfying the following properties:
\begin{enumerate}
\item For all $\hx \in \hat{M}$, $\omega_{\hx} : T_{\hx}\hat{M} \rightarrow \g$ is a linear isomorphism ;
\item For all $X \in \p$, $\omega(X^*) = X$, where $X^*$ denotes the fundamental vector field associated to the right action of $\exp(tX)$ ;
\item For all $p \in P$, $(R_p)^*\omega = \Ad(p^{-1})\omega$,
\end{enumerate}
with an additional normalization assumption on $\omega$ which we do not need to detail here. The fiber bundle is called the \textit{Cartan bundle} and the $1$-form is called the \textit{Cartan connection}. This correspondence is such that if $(M,[g])$ and $(N,[h])$ are two conformal Lorentzian structures, then a diffeomorphism $f : M \rightarrow N$ is conformal if and only if there exists a bundle morphism $\hat{f} : \hat{M} \rightarrow \hat{N}$, with base map $f$ and such that ${\hat{f}}^* \omega_N = \omega_M$. Moreover, $\hat{f}$ is completely determined by $f$ (see \cite{cap_slovak}, Prop. 1.5.3), so that considering a conformal map $f$ is the same than considering a bundle morphism $F : \hat{M} \rightarrow \hat{N}$ such that $F^* \omega_N = \omega_M$.

For any $x \in M$ and $\hx$ over $x$, we have a linear isomorphism $\varphi_{\hx} : T_xM \rightarrow \g / \p$ defined by $\varphi_{\hx}(v) = \omega_{\hx}(\hat{v})$ mod.$\p$, for all $v \in T_xM$ and $\hat{v} \in T_{\hx}\hat{M}$ such that $\pi_* \hat{v} = v$ (it is well defined by property (2)). There exists a conformally $\Ad_{\g}(P)$-invariant Lorentzian quadratic form $Q$ on $\g / \p$ such that, by construction of the Cartan geometry $(M,\hat{M},\omega)$,  $\varphi_{\hx}$ sends any $g_x$ in the conformal class on a positive multiple of $Q$.

If $f \in \Conf(M,[g])$, then the condition $\hat{f}^* \omega = \omega$ together with property (1) stated above says that $\hat{f}$ preserves a global framing on $\hat{M}$. This observation implies that the action of $\Conf(M,[g])$ on $\hat{M}$ is free and proper. In particular, if $M$ is connected and if $\hx \in \hat{M}$ is given, then $\hat{f}$ is completely determined by the data of $\hat{f}(\hx)$. At the infinitesimal level, the fact that a vector field $X \in \mathfrak{X}(M)$ is conformal translates into the existence of a vector field $\hat{X} \in \mathfrak{X}(\hat{M})$ called the lift of $X$ such that $\hat{X}$ commutes to the right $P$-action on $\hat{M}$, $\pi_* \hat{X} = X$ and $\mathcal{L}_{\hat{X}} \omega = 0$. A conformal vector field $X$ is determined by the evaluation of its lift $\hat{X}$ at any given point $\hx$.

\begin{proof}[Proof of Proposition \ref{prop:zimmer_conformal}]
As we mentioned earlier, this Proposition is based on a Theorem which is the main result of \cite{bader_frances_melnick}. It is formulated in the setting of general Cartan geometries and general Lie group actions. We have adapted the proof to our context.

Let $\pi : \hat{M} \rightarrow M$ and $\omega$ be the Cartan bundle and the Cartan connection defined by $(M,[g])$. Let $\iota : \hx \in \hat{M} \rightarrow \iota_{\hx} \in \Mon(\h,\g)$ be the map defined by setting $\forall	X \in \h$, $\iota_{\hx}(X) = \omega_{\hx}(\hat{X}_{\hx})$, where $\Mon(\h,\g)$ denotes the variety of injective linear maps $\h \rightarrow \g$. Let $H \times P$ act on $\hat{M}$ via $h.\hx.p^{-1}$ and on $\Mon(\h,\g)$ via $\Ad_{\g}(p) \circ \alpha \circ \Ad_{\h}(h^{-1})$. We verify that $\iota$ is $(H\times P)$-equivariant. Define $W := \bar{P} \backslash \Mon(\h,\g)$, where $\bar{P}$ denotes the Zariski closure of $\Ad_{\g}(P)$ in $\GL(\g)$. This quotient $W$ is a stratified variety endowed with a stratified algebraic action of $\GL(\h)$ (by a result of Rosenlicht, see \cite{gromov}, 2.2). Moreover, $\iota$ induces an $H$-equivariant map
\begin{equation*}
\psi : M \rightarrow W,
\end{equation*}
when $H$ acts on $W$ via $\Ad_{\h}(H) \subset \GL(\h)$.

By hypothesis, $S$ is either a one-parameter subgroup of $H$, or is conjugated to the identity component of the affine group $\Aff^+(\R)$. Thus, it is amenable, so that for every non-empty closed $S$-invariant subset $F \subset M$, there exists a finite $S$-invariant measure $\mu$ on $M$ which is supported on $F$. Since $\psi$ is equivariant, $S$ acts on $W$ by preserving $\psi_* \mu$. In fact, we have more: the Zariski closure $\bar{S}$ of $\Ad_{\h}(S)$ in $\GL(\h)$ acts on $W$ and preserves $\psi_* \mu$ (\cite{zimmer_ergodic}, 3.2.4). By hypothesis, $\bar{S}$ is either an hyperbolic or unipotent one-parameter subgroup of $\GL(\h)$ or is isomorphic to the affine group $\R^* \ltimes \R$. In particular, $\bar{S}$ does not admit non-trivial algebraic cocompact subgroups. Using a generalization of Borel's density theorem (\cite{furstenberg}, Lemma 3), we conclude that $\psi_* \mu$ almost every point in $W$ is fixed by $\bar{S}$.

In particular, there exists $x \in F$ such that $S.\psi(x) = \{\psi(x)\}$. This means that for any $\hx \in \pi^{-1}(x)$, we have $S.\iota_{\hx} \subset P.\iota_{\hx}$, or equivalently that for any $s \in S$, there is $p \in P$ such that $\iota_{\hx}$ conjugates the adjoint action of $s$ on $\h$ to the adjoint action of $p$ on $\iota_{\hx}(\h)$. Now, any $X \in \h$ vanishes at $x$ if and only if for some (equ. for all) $\hx \in \pi^{-1}(x)$, the lift $\hat{X}_{\hx}$ is vertical, or equivalently $\iota_{\hx}(X) \in \p$. Therefore, $X \in \h_x \Rightarrow \iota_{\hx}(X) \in \p \Rightarrow \Ad_{\g}(p) \iota_{\hx}(X) = \iota_{\hx}(\Ad_{\h}(s)X) \in \p \Rightarrow \Ad_{\h}(s)X \in \h_x$.

This proves that $\Ad_{\h}(S)$ preserves the stabilizer $\h_x$ and induces a linear action of $S$ on the quotient $\h / \h_x$, that we note $\bar{\Ad}$. Since $\iota_{\hx}(\h_x) \subset \p$, the map $\iota_{\hx}$ induces a linear map $\psi_{\hx} : \h / \h_x \rightarrow \g / \p$ and it comes from the definitions that $\psi_{\hx}$ coincides with the restriction of $\varphi_{\hx}$ to $T_x \mathcal{O}_x \simeq \h / \h_x$ (this map is defined in Section \ref{ss:cartan_geometry}). Thus, $\psi_{\hx} : (\h/\h_x , q_x) \rightarrow (\g / \p, Q)$ is a conformal linear injective map. The property $S. \iota_{\hx} \subset P. \iota_{\hx}$ implies $\psi_{\hx}(\bar{\Ad}(s)\bar{X}) = \bar{\Ad}(p) \psi_{\hx}(\bar{X})$, where the bars mean that we are in the quotient $\h / \h_x$ or $\g / \p$. Finally, we compute
\begin{align*}
\forall \bar{X} \in \h / \h_x, \ q_x(\bar{\Ad}(s) \bar{X}) & = \lambda Q(\psi_{\hx}(\bar{\Ad}(s) \bar{X})) \text{ for some } \lambda > 0 \text{ since } \psi_{\hx} \text{ is conformal} \\
& = \lambda Q (\bar{\Ad}(p) \psi_{\hx}(\bar{X})) \\
& = \lambda \lambda' Q(\psi_{\hx}(\bar{X})) \text{ for some } \lambda' > 0 \text{ since } \bar{\Ad}(p) \in \Conf(\g/\p , Q) \\
& = \lambda' q_x(\bar{X}),
\end{align*}
proving that $\bar{\Ad}(s) \in \Conf(\h / \h_x, q_x)$.
\end{proof}

\subsection{Locally free conformal actions are inessential: Proof of Proposition \ref{prop:sl2_inessential} }

We assume now that $H \simeq_{\text{loc}} \PSL(2,\R)$ acts conformally and locally freely on $(M,g)$, \textit{i.e.} $\h_x = 0$ for all $x \in M$. The Lie algebra $\h$ being isomorphic to $\sl(2,\R)$, we have an $\sl(2)$-triple $(X,Y,Z)$ of conformal vector fields of $(M,g)$, and our assumption is that they are everywhere linearly independent. Let $S:=\{\e^{tX}\}_{t \in \R} < H$.

\begin{lem}
\label{lem:signature_zimmer_point}
Every closed, $S$-invariant subset of $M$ contains a point $x$ such that the orbit $H.x$ has Lorentz signature. Precisely, $X_x$ is space-like, $Y_x$ and $Z_x$ are light-like and orthogonal to $X_x$.
\end{lem}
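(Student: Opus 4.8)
The plan is to deduce the lemma from Proposition \ref{prop:zimmer_conformal} applied to the one-parameter group $S = \{\e^{tX}\}$. First I would check its hypotheses. Since $(X,Y,Z)$ is an $\sl(2)$-triple, the relations $[X,Y]=Y$ and $[X,Z]=-Z$ show that $\ad X$ is diagonalizable on $\h = \Span(X,Y,Z)$ with eigenvalues $0,1,-1$; hence $\Ad_{\h}(S) = \{\e^{t\ad X}\}$ is a hyperbolic (split) one-parameter subgroup, in particular not elliptic, while $S$ is clearly proper, closed and connected in $H$. Proposition \ref{prop:zimmer_conformal} then produces, inside the prescribed closed $S$-invariant subset, a point $x$ at which $\Ad_{\h}(S)$ preserves $\h_x$ and acts conformally on $(\h/\h_x, q_x)$.

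Next I would feed in the local freeness. Because $\h_x = 0$, we have $\h/\h_x = \h$, and $q_x$ is exactly the restriction of $g_x$ to the tangent space of the three-dimensional orbit $\mathcal{O}_x$, identified with $\h$. The essential geometric input is then the (elementary) classification of the restrictions of a Lorentz scalar product to a $3$-plane: the radical has dimension at most $1$ and is isotropic, and the form induced on the quotient by the radical is positive definite. This is what will ultimately force the signature; in particular a $3$-plane can never be totally isotropic, so $q_x \neq 0$.

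The core computation is to read conformality as a weight condition. Conformality provides $\mu \in \R$ with $q_x(\e^{t\ad X}u, \e^{t\ad X}v) = \e^{\mu t}\,q_x(u,v)$; writing $q_x$ in the eigenbasis $(X,Y,Z)$, where $\e^{t\ad X}$ scales $X,Y,Z$ by $1,\e^{t},\e^{-t}$, an entry $q_x(e_i,e_j)$ can be nonzero only when the sum of the two weights equals $\mu$. Since $q_x \neq 0$, this restricts $\mu$ to $\{0,\pm 1,\pm 2\}$. I would then eliminate the unwanted values using the geometric input: $\mu = \pm 2$ leaves a single nonzero diagonal entry, hence a radical of dimension $\geq 2$, which is impossible; $\mu = \pm 1$ makes $q_x$ induce a Lorentzian plane on the quotient by its radical, contradicting positive definiteness. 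Thus $\mu = 0$, only $q_x(X,X)$ and $q_x(Y,Z)$ survive, so $X_x$ is orthogonal to $Y_x$ and $Z_x$ while $Y_x,Z_x$ are isotropic. Non-degeneracy forces $q_x(Y,Z) \neq 0$, and since this pair already spans a Lorentzian plane, the absence of a second time-like direction forces $X_x$ to be space-like; this gives precisely the announced signature. The only genuine subtlety — the ``hard part'' — is this bookkeeping, matching the algebraic weight constraint against the admissible signatures a Lorentz metric can induce on a $3$-plane; everything else is formal.
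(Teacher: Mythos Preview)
Your proof is correct and follows the same strategy as the paper: invoke Proposition~\ref{prop:zimmer_conformal} for the hyperbolic one-parameter group $S=\{\e^{tX}\}$, then combine the weight decomposition of $q_x$ under $\Ad(\e^{tX})$ with the sub-Lorentzian constraints to pin down the signature. The paper organises the elimination slightly differently (assuming $q_x(Y)\neq 0$ forces $\lambda=2$ and produces a $2$-dimensional totally isotropic plane, etc.), while you run a cleaner exhaustive sweep over the possible conformal weights $\mu\in\{0,\pm 1,\pm 2\}$; the content is the same. One small wording issue: your blanket statement that ``the form induced on the quotient by the radical is positive definite'' only holds when the radical is nonzero (in the non-degenerate case the form may be Lorentzian), but since you invoke it only in the degenerate cases $\mu=\pm 1$ and use the radical-dimension bound for $\mu=\pm 2$, the argument is unaffected.
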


Before proving this lemma, we give the following definition.

\begin{dfn}
Let $V$ be a finite-dimensional vector space, endowed with a Lorentz quadratic form $q$. If $V' \subset V$ is a subspace, then the restriction $q|_{V'}$ is said to be \textit{sub-Lorentzian}. 
\end{dfn}

A sub-Lorentzian quadratic form is either non-degenerate, with Riemannian or Lorentzian signature, or degenerate and non-negative, with a $1$-dimensional kernel. In any event, a subspace that is totally isotropic with respect to a sub-Lorentzian quadratic form has dimension at most $1$.

\begin{proof}[Proof (Lemma \ref{lem:signature_zimmer_point})]
By Proposition \ref{prop:zimmer_conformal} and since $\h_x = 0$ for all $x \in M$, any closed $S$-invariant subset contains a point $x$ such that $\Ad_{\h}(S) \subset \Conf(\h,q_x)$. We note $a_t := \Ad_{\h}(\e^{tX})$. Since $a_t$ is linear and conformal with respect to $q_x$, there exists $\lambda \in \R$ such that $a_t^* q_x = \e^{\lambda t} q_x$. 

We claim that at $x$, the vector fields $Y$ and $Z$ are light-like and that $\lambda = 0$. To see it, assume for instance that $q_x(Y) \neq 0$. Since $q_x(a_t Y) = \e^{2t}q_x(Y) = \e^{\lambda t}q_x(Y)$, we would have $\lambda = 2$. But on the other hand, $q_x(a_t X)) = q_x(X)$, $q_x(a_t Z) = \e^{-2t}q_x(Z)$ and if we note $b_x$ the bilinear form on $\h$ associated to $q_x$, we also have $b_x(a_t X,a_t Z) = \e^{-t} b_x(X,Z)$. Thus, from $a_t^* q_x = \e^{2t}q_x$ we would deduce that the plane $\Span(X_x,Z_x)$ is totally isotropic with respect to $q_x$, contradicting that $q_x$ is sub-Lorentzian. Therefore, $Y_x$ is light-like and a similar reasoning also gives $q_x(Z)=0$. Furthermore, $Y_x$ and $Z_x$ cannot be orthogonal, and since $b_x(a_t Y, a_t Z) = b_x(Y,Z) = \e^{\lambda t}b_x(Y,Z)$, we obtain $\lambda = 0$, meaning that $\{a_t\}_{t \in \R}$ is in fact a flow of linear isometries of $(\h,q_x)$. Because $b_x(a_t X, a_t Y)=\e^t b_x(X,Y)$ and $b_x(a_t X, a_t Z) = \e^{-t} b_x(X,Z)$, both $Y_x$ and $Z_x$ are orthogonal to $X_x$. The plane $\Span(Y_x,Z_x)$ being Lorentzian, $X_x$ must be space-like.
\end{proof}

Now, consider the closed subset $F = \{x \in M \ | \ g_x(X,X) \leq 0\}$. It is $S$-invariant since $\phi_X^t$ is conformal with respect to $g$. By Lemma \ref{lem:signature_zimmer_point}, it must be empty, \textit{i.e.} $X$ \textit{is every space-like}. This fact is very useful as the following lemma shows (remark that it is valid in any signature).

\begin{lem}[\cite{obata70}, Theorem 2.4]
\label{lem:centralizer_inessential}
Let $X' \in \Kill(M,[g])$ be a conformal vector field. If $X'$ is nowhere light-like, then $\forall f \in \Conf(M,g)$ such that $f^* X' = X'$, we have $f \in \Isom(M, \frac{g}{|g(X',X')|})$.
\end{lem}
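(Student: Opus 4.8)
The plan is to prove this by an explicit verification, directly checking that $f$ preserves the metric $g' := \frac{g}{|g(X',X')|}$ appearing in the statement. First I would record that the hypothesis that $X'$ is nowhere light-like means exactly that the function $\mu := g(X',X')$ vanishes nowhere on $M$; hence $|\mu|$ is a smooth positive function and $g' = g/|\mu|$ is a genuine Lorentz metric, conformal to $g$ (its signature is unchanged, since we only rescale by a positive factor). Note that $g'(X',X') = \mu/|\mu| = \pm 1$ is locally constant, so $X'$ has unit $g'$-length on each connected component; this is the geometric reason why $g'$ is the natural candidate.

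The heart of the argument is a single computation relating the conformal factor of $f$ to the behaviour of $\mu$. Since $f$ is conformal, write $f^* g = \varphi \, g$ for some positive function $\varphi$. Since $f^* X' = X'$, the diffeomorphism $f$ maps $X'$ to itself, that is $df \circ X' = X' \circ f$. I would then use the elementary pullback identity
\[
(f^* g)(X',X') = f^*\bigl(g(X',X')\bigr) = f^* \mu,
\]
which holds precisely because $df$ sends $X'_x$ to $X'_{f(x)}$. Combining this with $f^* g = \varphi g$ gives $f^* \mu = \varphi \, \mu$, and since $\varphi > 0$ this yields $f^*|\mu| = \varphi\,|\mu|$.

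With this in hand the conclusion is immediate:
\[
f^* g' = \frac{f^* g}{f^*|\mu|} = \frac{\varphi\, g}{\varphi\,|\mu|} = \frac{g}{|\mu|} = g',
\]
so $f \in \Isom(M, g')$, as claimed.

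I do not expect a serious obstacle here: the statement reduces to the two facts that $f$ scales $g$ and $\mu$ by the \emph{same} positive factor $\varphi$ (the first by definition of conformality, the second because $f$ fixes $X'$), so the scaling cancels in the ratio $g/|\mu|$. The only points requiring care are the bookkeeping with the pullback conventions for the invariant vector field $X'$, and the observation that the sign of $\mu$ is locally constant so that $g'$ is genuinely Lorentzian. The argument also makes transparent why the hypothesis that $X'$ is nowhere light-like is essential: it is exactly what guarantees that $|\mu|$ is positive, so that dividing by it is legitimate.
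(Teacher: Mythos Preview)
Your proof is correct and is essentially identical to the paper's: both write $f^*g=\varphi g$, use $f_*X'=X'$ to deduce that $\mu\circ f=\varphi\,\mu$ where $\mu=g(X',X')$, and conclude that the factor $\varphi$ cancels in $f^*(g/|\mu|)$. Your version is slightly more careful with the absolute value and explains why the nowhere-light-like hypothesis is needed, but the argument is the same.
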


\begin{proof}
Let $\varphi \in \mathcal{C}^{\infty}(M)$ such that $f^* g = \varphi g$. If $\psi := g(X',X')$, we have $\psi(f(x)) = [f^*g]_x(X',X') = \varphi(x) \psi(x)$ since $f_* X_x' = X_{f(x)}'$. Therefore, if we note $g_0 := g / g(X',X')$,
\begin{equation*}
[f^* g_0]_x = \frac{[f^* g]_x}{\psi(f(x))} = \frac{\varphi(x)g_x}{\varphi(x)\psi(x)} = [g_0]_x,
\end{equation*}
proving that $f \in \Isom(M,g_0)$.
\end{proof}

In our case, if we rescale conformally the metric $g$ by $g(X,X) > 0$, the flow $\phi_X^t$ becomes isometric. We still note $g$ this new metric. We now prove that $\{\phi_Y^t\}$ and $\{\phi_Z^t\}$ are also isometric with respect to $g$. Since $[X,Y] = Y$, we have $(\phi_X^t)_* Y_x = \e^{-t} Y_{\phi_X^t(x)}$, and because $\{\phi_X^t\} \subset \Isom(M,g)$, we obtain $g_{\phi_X^t(x)}(Y,Y) = \e^{2t} g_x(Y,Y)$ and $g_{\phi_X^t(x)}(X,Y) = \e^t g_x(X,Y)$. By compactness of $M$, the maps $\{x \mapsto g_x(Y,Y)\}$ and $\{x \mapsto g_x(X,Y)\}$ are bounded. Thus, the previous relations imply that $Y$ is everywhere isotropic and orthogonal to $X$. Now, the relation $[Y,X] = -Y$ gives $(\phi_Y^t)_* X_x = X_{\phi_Y^t(x)} + t Y_{\phi_Y^t(x)}$. Let $\lambda(x,t) > 0$ be such that $[(\phi_Y^t)^* g]_x = \lambda(x,t) g_x$. Using the previous relations, we get
\begin{equation*}
\lambda(x,t) g_x(X,X) = g_{\phi_Y^t(x)}(X,X).
\end{equation*}
By construction, the map $\{x \mapsto	g_x(X,X)\}$ is constant (equal to $1$). This gives us $\lambda(x,t) \equiv 1$, \textit{i.e.} $\phi_Y^t$ is an isometry of $g$. A strictly similar reasoning shows that the flow of $Z$ is also isometric with respect to $g$, and finally we get $H < \Isom(M,g)$ by connectedness.

\subsection{General essential actions of semi-simple Lie groups}

Let $H$ be a connected semi-simple Lie group acting conformally on the compact Lorentz manifold $(M,g)$. We regroup the simple ideals of $\h$ so that $\h = \h_1 \oplus \h_2$ with $\h_1$ semi-simple without compact factor, or trivial, and $\h_2$ compact and semi-simple, or trivial. Let $H_1 < H$ be the connected Lie subgroup corresponding to $\h_1$. We distinguish four cases:
\begin{enumerate}
\item \label{case1} $H_1$ is not locally isomorphic to $\PSL(2,\R)$ and $H_1 \neq \{\id\}$ ;
\item \label{case2} $H_1$ is locally isomorphic to $\PSL(2,\R)$ and essential ;
\item \label{case3} $H_1$ is locally isomorphic to $\PSL(2,\R)$ and inessential ;
\item \label{case4} $H_1 = \{\id\}$.
\end{enumerate}
We claim that $H$ is essential if and only if we are in case (\ref{case1}) or (\ref{case2}). Indeed, if we are in case (\ref{case1}) or (\ref{case2}), then $H_1$ is essential, implying that $H$ is also essential. If we are in case (\ref{case3}), let $(X,Y,Z)$ be an $\sl(2)$-triple in $\h_1$. We know that $X$ is everywhere space-like and that if we rescale $g$ by $g(X,X)$, then $H_1$ acts by isometries of $g$ and any conformal diffeomorphism that commutes with $X$ is isometric with respect to $g$ (proof of Proposition \ref{prop:sl2_inessential} and Lemma \ref{lem:centralizer_inessential}). Since $\h_1$ and $\h_2$ commute, the flow of any conformal vector field of $\h_2$ centralizes $X$, so it preserves the metric $g$. Thus, $H < \Isom(M,g)$ by connectedness, proving that $H$ is inessential. If we are in case (\ref{case4}), then $H$ is a compact Lie group and must be inessential.
 
\vspace*{0.2cm} 
 
This proves Proposition \ref{prop:semi_simple_inessential}. We finish this section by proving Theorem \ref{thm:main} $\Rightarrow$ Corollary \ref{cor:ss_conformally_flat}.

\vspace*{0.2cm}

Assume that Theorem \ref{thm:main} is true and that $H$ is essential and $(M,g)$ real-analytic. If we are in case (\ref{case2}), then Theorem \ref{thm:main} directly gives the conformal flatness of $(M,g)$. If we are in case (\ref{case1}), then the following lemma yields connected Lie subgroups of $H_1$ locally isomorphic to $\PSL(2,\R)$ that act essentially, and we are done.

\begin{lem}
\label{lem:semi_simple_essential}
Let $H_1$ be a connected semi-simple Lie group without compact factor. Assume that $\h_1 \neq \sl(2,\R)$ and that $H_1$ acts conformally on a compact Lorentz manifold $(M,g)$. Then, every Lie subgroup of $H_1$ locally isomorphic to $\PSL(2,\R)$ acts essentially on $(M,g)$.
\end{lem}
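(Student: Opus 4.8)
I will argue by contraposition: I assume that some connected subgroup $H' < H_1$ locally isomorphic to $\PSL(2,\R)$ acts inessentially, and aim to force $\h_1 \simeq \sl(2,\R)$. By the characterization recorded after Proposition~\ref{prop:semi_simple_inessential} (together with Proposition~\ref{prop:sl2_inessential} and Gromov's result quoted in the introduction), inessentiality of $H'$ is equivalent to $H'$ acting everywhere locally freely. So I may assume $\h'_x = 0$ for all $x$, where $\h' \simeq \sl(2,\R)$ carries an $\sl(2)$-triple $(X,Y,Z)$. Reproducing the mechanism of the proof of Proposition~\ref{prop:sl2_inessential}, Lemma~\ref{lem:signature_zimmer_point} applied to the closed $S$-invariant set $\{g(X,X) \leq 0\}$ shows that $X$ is everywhere space-like; after rescaling $g$ by $g(X,X)$ the flow $\phi_X^t$ is isometric, and by Lemma~\ref{lem:centralizer_inessential} the centralizer $\z := \{W \in \h_1 : [X,W]=0\}$ then acts by isometries of this rescaled metric.

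The heart of the argument would be the $\ad(X)$-grading $\h_1 = \bigoplus_\alpha \h_1^\alpha$. Since $\phi_X^t$ is isometric and $M$ is compact, any eigenvector $W \in \h_1^\alpha$ with $\alpha \neq 0$ satisfies $g_{\phi_X^t(x)}(W,W) = \e^{2\alpha t} g_x(W,W)$, forcing $g(W,W) \equiv 0$; likewise $g(W,W') \equiv 0$ whenever the two eigenvalues do not sum to zero. Hence the nilpotent subalgebra $\n^+ := \bigoplus_{\alpha>0}\h_1^\alpha$ maps, under $W \mapsto W_x$, into a totally isotropic subspace of the Lorentzian space $(T_xM, g_x)$, which has dimension at most $1$; as $Y \in \n^+$ and $Y_x \neq 0$, this image is exactly $\R Y_x$. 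Therefore $\n^+ \cap \h_x$ is a codimension-one subalgebra of $\n^+$ for every $x$. Since proper subalgebras of a nilpotent Lie algebra are strictly contained in their normalizers, a codimension-one subalgebra is an ideal, so $[\n^+,\n^+] \subseteq \n^+ \cap \h_x \subseteq \h_x$ for all $x$. Faithfulness gives $\bigcap_x \h_x = 0$, whence $[\n^+,\n^+] = 0$; the symmetric argument yields $[\n^-,\n^-] = 0$. Thus both nilradicals of the $\ad(X)$-grading must be abelian, which already eliminates every embedding $\h' \hookrightarrow \h_1$ producing a non-abelian positive part (in particular principal $\sl(2)$'s in higher-rank simple factors).

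It then remains to contradict the abelian-nilradical configurations. If $\n^+ = \R Y$ is one-dimensional, the nontrivial $\ad(\h')$-irreducible components of $\h_1$ reduce to the adjoint copy $\h'$ itself, so $\h'$ is an ideal and $\h_1 = \h' \oplus \h''$ with $\h''$ semi-simple without compact factor centralizing $X$; then $\h''$, and hence all of $\h_1$, acts isometrically, and Zimmer's theorem (\cite{zimmer86}) forces $\h_1 \simeq \sl(2,\R)$, contradiction. If instead $\dim \n^+ \geq 2$ with $\n^+$ abelian, I would turn to the reductive centralizer $\z = \h_1^0$: its semi-simple part $[\z,\z]$ acts isometrically, so by Zimmer's theorem its non-compact factor is again $\simeq \sl(2,\R)$, which is already a contradiction whenever $[\z,\z]$ carries a strictly larger non-compact ideal (e.g. the $\so(1,k-1)$ Levi part of $\so(2,k)$ with $k \geq 4$).

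The step I expect to be the genuine obstacle is closing the residual cases in which $\n^+$ is abelian of dimension $\geq 2$ while $[\z,\z]$ is compact or itself isomorphic to $\sl(2,\R)$ — the small Hermitian-type embeddings clustering around $\so(2,3) \simeq \sp(4,\R)$. For these the dynamical inputs above no longer close by themselves, and I would eliminate them by invoking the classification of semi-simple Lie groups admitting a conformal action on a compact Lorentz manifold from \cite{article2}: that classification restricts $\h_1$ to a short list, and the preceding analysis shows that none of its members with $\h_1 \neq \sl(2,\R)$ can carry an everywhere locally free, hence inessential, $\sl(2,\R)$-subgroup. This establishes that every such subgroup is essential.
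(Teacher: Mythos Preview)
Your opening matches the paper: assume $H'$ inessential, hence locally free; deduce $X$ is everywhere space-like; rescale so that $g(X,X)\equiv 1$ and $\phi_X^t$ is isometric; conclude via Lemma~\ref{lem:centralizer_inessential} that the centralizer $\h_1^0$ of $X$ acts by isometries. You also correctly observe that every $W\in\h_1^\alpha$ with $\alpha\neq 0$ is everywhere isotropic and orthogonal to $X$.

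The divergence --- and the gap --- is what you do next. You launch a structural analysis of $\n^+=\bigoplus_{\alpha>0}\h_1^\alpha$ (abelian, then a case split on $\dim\n^+$, then an appeal to the classification in \cite{article2} for the residual Hermitian-type cases), and you explicitly leave this last step unfinished. The paper avoids all of this with one more line already present in the proof of Proposition~\ref{prop:sl2_inessential}. Once $W\in\h_1^\alpha$ is known to be isotropic and orthogonal to $X$, replace $X$ by $X/\alpha(X)$ (still Killing, $g(X,X)$ still a positive constant) so that $[X,W]=W$; then $(\phi_W^t)_*X_x = X_{\phi_W^t(x)} + tW_{\phi_W^t(x)}$, and computing $g(X,X)$ on both sides forces the conformal factor of $\phi_W^t$ to be identically $1$. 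Hence \emph{every} root vector generates an isometric flow, so all of $H_1$ acts isometrically on the rescaled metric, and Zimmer's theorem immediately gives $\h_1\simeq\sl(2,\R)$. The point you missed is that the mechanism of Proposition~\ref{prop:sl2_inessential} does not merely make root vectors isotropic --- it makes their flows isometric, and that closes the argument in one stroke without any case analysis or external classification.
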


\begin{proof}
We note $\Kill(M,g)$ the Lie algebra of Killing vector fields of $g$, \textit{i.e.} the infinitesimal generators of isometries of $g$. Let $\{X,Y,Z\} \subset \h_1$ be an $\sl(2)$-triple, to which corresponds a connected Lie subgroup $H' < H_1$ locally isomorphic to $\PSL(2,\R)$. Assume that $H'$ is inessential. We know that it is the same than assuming that $X,Y,Z$ are everywhere linearly independent, and the proof of Proposition \ref{prop:sl2_inessential} shows that $X$ is everywhere space-like. Since $\{X,Y,Z\}$ is an $\sl(2)$-triple in $\h_1$, $X$ must be in some Cartan subspace $\a \subset \h_1$. The end of the proof is now similar to the one of Proposition \ref{prop:sl2_inessential}.

Let $\Delta \subset \a^*$ be the restricted root system associated to $\a$ and $\h_1 = \h_0 \oplus \bigoplus_{\alpha \in \Phi} \h_{\alpha}$ be the corresponding restricted root-space decomposition of $\h_1$. Rescale the metric by $g(X,X)$. By Lemma \ref{lem:centralizer_inessential}, we have $\h_0 \subset \Kill(M,g)$. Let $\alpha \in \Delta$. If $\alpha(X) = 0$, then $X$ is centralized by $\h_{\alpha}$ and we also have $\h_{\alpha} \subset \Kill(M,g)$. If $\alpha(X) \neq 0$, then let $Y_{\alpha}$ be a non-zero element of $\h_{\alpha}$. If we replace $X$ by $X / \alpha(X)$, then $X \in \Kill(M,g)$, $g(X,X) \equiv$ constant and $[X,Y_{\alpha}] = Y_{\alpha}$. The end of the proof of Proposition \ref{prop:sl2_inessential} can be applied to the couple $(X,Y_{\alpha})$ and we obtain $Y_{\alpha} \in \Kill(M,g)$. Finally, all of $H_1$ acts by isometries of $g$ and $H_1$ must be locally isomorphic to $\PSL(2,\R)$.
\end{proof}

\section{Orbits near fixed points}
\label{s:fixed_points}

Our question on essential conformal actions of semi-simple Lie groups is henceforth reduced to the case where the group is locally isomorphic to $\PSL(2,\R)$. Thus, until the end of the article, $H$ is assumed to be locally isomorphic to $\PSL(2,\R)$ and we assume that its action on $(M,g)$ is essential. By Proposition \ref{prop:sl2_inessential}, we know that the closed $H$-invariant subset $F_{\leq 2} := \{x \in M \ | \ \dim(H.x) \leq 2\}$ is non-empty. The proof of Theorem \ref{thm:main} will take place in this subset: using dynamical methods, we will prove that an $H$-orbit in $F_{\leq 2}$ admits a conformally flat neighbourhood. Prior to this, we establish in this section the following dynamical property.

\begin{prop}
\label{prop:no_fixed_point}
There exists a closed $H$-invariant subset $F \subset F_{\leq 2}$ which does not contain global fixed points of the $H$-action.
\end{prop}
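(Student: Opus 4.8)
The plan is to exhibit a nonempty such $F$, the organizing principle being a soft minimal-set dichotomy. By Zorn's lemma and compactness of $M$, the closed invariant set $F_{\leq 2}$ contains a minimal nonempty closed $H$-invariant subset $\mathcal M$. If $\mathcal M$ meets $\mathrm{Fix}(H)$, then for a fixed point $x_0 \in \mathcal M$ the singleton $\{x_0\}$ is closed and invariant, so minimality forces $\mathcal M = \{x_0\}$; otherwise $\mathcal M \cap \mathrm{Fix}(H) = \emptyset$ and $F := \mathcal M$ answers the proposition. Hence everything reduces to producing a minimal set that is not a single point --- equivalently, a non-stationary point $x \in F_{\leq 2}$ with $\overline{H.x}$ disjoint from $\mathrm{Fix}(H)$. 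If $\mathrm{Fix}(H) = \emptyset$ we are done with $F = F_{\leq 2}$, so I assume throughout that $\mathrm{Fix}(H) \neq \emptyset$.

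Next I would stratify $F_{\leq 2}$ by orbit dimension, recalling that a nonzero proper subalgebra of $\h \simeq \sl(2,\R)$ is either a line or a Borel subalgebra, so a point of $F_{\leq 2}$ has orbit of dimension $0$ (a fixed point), $1$ (Borel stabilizer) or $2$ (one-dimensional stabilizer). The favourable case is a compact orbit of positive dimension: a $1$-dimensional orbit of $\PSL(2,\R)$ is the circle $\PSL(2,\R)/B \cong \R\mathrm{P}^1$, and any compact orbit is a single closed $H$-invariant set without fixed points, giving $F$ at once; likewise a closed $2$-dimensional orbit finishes the argument. The difficulty therefore concentrates on orbits whose closure is strictly larger than themselves: the boundary of such an orbit consists of orbits of strictly smaller dimension, and the whole point is to decide whether fixed points must appear there.

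It remains to exclude the scenario in which every positive-dimensional orbit of $F_{\leq 2}$ fails to be closed and accumulates onto $\mathrm{Fix}(H)$. Here I would analyze the local dynamics at a fixed point $x_0$. The linear isotropy representation $\rho_* : \h \to \co(T_{x_0}M, g_{x_0}) \cong \co(1,n-1) = \R\cdot\Id \oplus \so(1,n-1)$, followed by projection to the homothety line $\R\cdot\Id$, is a morphism from the perfect Lie algebra $\sl(2,\R)$ into an abelian one, hence vanishes; thus $\rho_*$ takes values in $\so(1,n-1)$ and the first-order action at $x_0$ is isometric for $g_{x_0}$. In the resulting linear model $\PSL(2,\R) \hookrightarrow \SO(1,n-1)$ acting on $(T_{x_0}M, g_{x_0}) \simeq \R^{1,n-1}$, the generator $\rho_*(X)$ is a boost, so $\rho(\phi_X^t)$ has eigenvalues $\e^{\pm t}$ and $x_0$ is a saddle, while the non-null level sets of the invariant form are positive-dimensional orbits bounded away from the origin. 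This strongly suggests that, back on $M$, there are genuine orbits confined to a region where $g(X,X)$ keeps a fixed sign, whose closures therefore avoid the fixed locus --- exactly the non-trivial minimal sets we need.

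The main obstacle is precisely the passage from this first-order linear picture to the actual conformal dynamics in a punctured neighbourhood of $x_0$, carried out without the analyticity deliberately avoided in this section. Concretely, one must show that the saddle behaviour of $\phi_X^t$ together with the sub-Lorentzian signature constraints on orbits supplied by Proposition \ref{prop:zimmer_conformal} (in the spirit of the computation in Lemma \ref{lem:signature_zimmer_point}) genuinely trap an orbit away from $\mathrm{Fix}(H)$, rather than merely doing so in the linearization. Establishing this smoothly --- via stable/unstable-set control for the conformal flow, combined with the amenability and Borel-density input underlying Proposition \ref{prop:zimmer_conformal} --- is where the real work lies; the minimal-set reduction and the dimension bookkeeping above are soft and purely topological.
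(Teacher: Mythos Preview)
Your outline correctly isolates the structure of the problem and even identifies the exact gap in your own argument: the passage from the first-order linear picture at a fixed point $x_0$ to the genuine conformal dynamics in a punctured neighbourhood. But you then leave that gap open, and it is the entire content of the proposition. The minimal-set reduction and the stratification by orbit dimension are indeed soft; what is \emph{not} soft is ruling out that every positive-dimensional orbit in $F_{\leq 2}$ accumulates on $\mathrm{Fix}(H)$, and you have not done so.

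The paper closes this gap with a tool you do not invoke: the Frances--Zeghib linearization theorem (Proposition~\ref{prop:linearizability}), which says that a conformal action of a \emph{simple} Lie group fixing a point is honestly linearizable near that point, not merely to first order. This is exactly the bridge you say is missing. Once the action is conjugate near $x_0$ to the isotropy representation $\rho_{x_0}:H\to\SO_0(1,2)\subset\SO(1,n-1)$, the local $H$-orbits are the level sets of the Lorentz form on the $3$-plane where $H$ acts nontrivially, and one reads off their stabilizers directly. The orbits of type $\H^2$ (timelike level sets) have identity-component stabilizer $S_e\simeq\SO(2)$; since $S_e$ is self-normalizing in $\SO_0(1,2)$, this is the full stabilizer. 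The crucial lemma (Lemma~\ref{lem:equality_local_orbit}) is that the \emph{global} $H$-orbit through such a point meets the linearization chart $U$ exactly in its local orbit: any other point of $H.x$ landing in $U$ would lie on a local orbit with stabilizer conjugate to $S_e$, hence of type $\H^2$, and a conjugacy/normalizer argument forces the two local orbits to coincide. Consequently $H.x\cap U$ is a submanifold bounded away from $x_0$, so $x_0\notin\overline{H.x}$. Taking $F=\overline{H.x}$ finishes the proof.

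Your suggestion to use saddle behaviour of $\phi_X^t$ together with Proposition~\ref{prop:zimmer_conformal} is in the right spirit but would not by itself pin down an orbit that stays away from \emph{every} fixed point; the stabilizer-rigidity argument above (compact stabilizer cannot degenerate to a hyperbolic or unipotent one along the orbit closure) is what actually makes this work, and it requires the honest linearization, not just the differential.
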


Of course, this proposition has an interest when there exists a global fixed point of the action, \textit{i.e.} a point $x \in M$ such that $H.x = \{x\}$, what we will assume in this section. Before starting the proof of this proposition, we introduce two examples of essential conformal actions.

\subsection{Examples of essential actions}
\label{ss:examples}

If $k \geq 2$, let $\R^{1,k}$ be the $(k+1)$-dimensional Minkowski space and $\Gamma = <2 \id>$ be the (conformal) group generated by a non-trivial homothety. Naturally, $\Gamma$ acts properly discontinuously on $\R^{1,k} \setminus \{0\}$ and is centralized by the linear action of $\SO(1,k)$. Therefore, $\SO(1,k)$ acts conformally on the quotient $(\R^{1,k} \setminus \{0\}) / \Gamma$, usually called a \textit{Hopf manifold}. It is a conformally flat Lorentz manifold, diffeomorphic to $\S^1 \times \S^{k-1}$. If we decompose orthogonally $\R^{1,k} = \R^{1,2} \oplus \R^{0,k-2}$ and let $\PSL(2,\R) \simeq \SO_0(1,2)$ act on $\R^{1,k}$ via $(g,(x,y)) \mapsto (gx,y)$, we obtain a conformal action of $\PSL(2,\R)$ on this compact Lorentz manifold. All its orbits are $2$-dimensional or reduced to a fixed point. In particular, this action is nowhere locally free: it is a first example of essential action of $\PSL(2,\R)$. We can observe the conclusion of Proposition \ref{prop:no_fixed_point}: Let $F$ be the projection of $\R^{1,2} \setminus \{0\}$ in the Hopf manifold. Then, $F$ is closed, $\PSL(2,\R)$-invariant and every orbit in $F$ is $2$-dimensional.

\vspace*{0.2cm}

Let $n \geq 3$. We define (as a manifold) the \textit{Einstein universe} of Lorentzian signature as being the projection of $\mathcal{L}^{2,n} \setminus \{0\}$ in $\R P^{n+1}$, where $\mathcal{L}^{2,n}$ denotes the light-cone of $\R^{2,n}$. Thus, $\Ein^{1,n-1}$ is an $n$-dimensional smooth compact projective variety, on which the group $\PO(2,n)$ acts transitively. It naturally inherits a conformal class $[g]$ of Lorentz metrics from the ambient quadratic form of $\R^{2,n}$, and $[g]$ is invariant under the action of $\PO(2,n)$. If we decompose orthogonally $\R^{2,n} = \R^{1,2} \oplus \R^{1,n-2}$ and let $O(1,2)$ act on $\R^{2,n}$ via $(g,(x,y)) \mapsto (gx,y)$, then we obtain a conformal action of $\PO(1,2)$ on $\Ein^{1,n-1}$ that is also essential (same argument). If $F$ denotes the projection of $(\mathcal{L}^{1,2} \times \{0\}) \setminus \{0\}$ in $\R P^{n+1}$, where $\mathcal{L}^{1,2}$ is the light-cone of $\R^{1,2}$, then $F$ is diffeomorphic to a circle on which $\PO(1,2)$ acts transitively.

\vspace*{0.3cm}

Coming back to the general situation, since $H$ is simple, we can describe locally its orbits near any fixed point thanks to the following \textit{local linearization} property of conformal actions. We will then see that some $2$-dimensional orbits cannot accumulate in the neighbourhood of any fixed point, and it will be enough to consider the closure of such orbits.

\begin{prop}[\cite{frances_zeghib}, Theorem 7]
\label{prop:linearizability}
Let $H'$ be a simple Lie group. If  $H'$ acts conformally on a pseudo-Riemannian manifold $(M',g')$, with $\dim M' \geq 3$, by fixing a point $x_0 \in M'$, then its action is linearizable near $x_0$, \textit{i.e.} there exists a local diffeomorphism $\psi : \mathcal{U} \rightarrow U$, between a neighbourhood $\mathcal{U}$ of $0$ in $T_{x_0}M'$ and a neighbourhood $U$ of $x_0$ in $M'$, that conjugates near $x_0$ the $H'$-action to its isotropy representation.
\end{prop}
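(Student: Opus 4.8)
The plan is to work inside the normalized conformal Cartan geometry $(\hat M', \omega)$ modeled on $G/P$, with $G = \PO(p+1,q+1)$ and $P$ the isotropy parabolic, associated to $(M',[g'])$ exactly as recalled in Section~\ref{ss:cartan_geometry}. The strategy is the classical one: a fixed point of a geometry-preserving action produces a ``holonomy morphism'' into the isotropy group $P$, and the exponential map built from the Cartan connection $\omega$ will conjugate the action to its linear part, \emph{provided} this holonomy is of reductive type. Arranging that the holonomy be reductive is precisely where the simplicity of $H'$ enters.

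First I would lift the action. Since $H'$ acts conformally fixing $x_0$, each $h \in H'$ lifts to a unique bundle automorphism $\hat h$ of $\hat M'$ with $\hat h^* \omega = \omega$, and these lifts preserve the fiber $\pi^{-1}(x_0)$. Fixing $\hx_0 \in \pi^{-1}(x_0)$, the relation $\hat h(\hx_0) = \hx_0 \cdot \rho(h)$ defines, using property~(3) of $\omega$ and the commutation of $\hat h$ with the right $P$-action, a Lie group morphism $\rho : H' \to P$. Transporting $\Ad_{\g}\circ\rho$ through the isomorphism $\varphi_{\hx_0} : T_{x_0}M' \to \g/\p$ recovers exactly the isotropy representation of $H'$ at $x_0$, so it suffices to conjugate the $H'$-action near $x_0$ to the linear representation $h \mapsto \Ad_{\g}(\rho(h))$ on $\g/\p$.

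Now comes the key algebraic step. The parabolic $P$ has a Levi decomposition $P = G_0 \ltimes P^+$, with reductive factor $G_0 \simeq \CO(p,q)$ acting on $\g/\p \simeq \g_{-1}$ by the linear conformal representation, and unipotent radical $P^+$. Because $H'$ is simple and connected, $\ker\rho$ is central and $\rho(H')$ is a connected reductive subgroup of $P$; by conjugacy of reductive subgroups into a Levi factor (Mostow), there is $p_0 \in P^+$ with $p_0\,\rho(H')\,p_0^{-1} \subset G_0$. Replacing $\hx_0$ by $\hx_0 \cdot p_0^{-1}$ conjugates $\rho$ accordingly, so we may assume $\rho(H') \subset G_0$. \textbf{This reduction is the main obstacle}: for a non-simple group the holonomy could project nontrivially onto characters of $P$ or into $P^+$ (a homothety is the prototypical obstruction) and no linearization would hold; simplicity is used precisely to force $\rho(H')$ to be reductive, hence conjugable into $G_0$.

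Finally, I would introduce the conformal exponential map. Since $\g_{-1}$ is an $\Ad_{\g}(G_0)$-invariant complement of $\p$ in $\g$, for $v$ near $0$ in $\g_{-1}$ set $E(v) := \pi\bigl(\Phi^1(\hx_0)\bigr)$, where $\Phi^t$ denotes the flow of the $\omega$-constant vector field $\omega^{-1}(v)$. Its differential at $0$ is the linear isomorphism induced by $\varphi_{\hx_0}^{-1}$ under $\g_{-1} \simeq \g/\p$, so $E$ is a local diffeomorphism onto a neighbourhood of $x_0$ by the inverse function theorem. Because each $\hat h$ preserves $\omega$, hence every $\omega$-constant field, and satisfies $\hat h(\hx_0) = \hx_0 \cdot \rho(h)$ with $\rho(h) \in G_0$, the right-equivariance $(R_p)^*\omega = \Ad_{\g}(p^{-1})\omega$ gives, after projecting by $\pi$,
\begin{equation*}
h \cdot E(v) = E\bigl(\Ad_{\g}(\rho(h))\,v\bigr),
\end{equation*}
where one uses that $\Ad_{\g}(G_0)$ preserves $\g_{-1}$. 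Thus $\psi := E$ conjugates the $H'$-action near $x_0$ to its isotropy representation on $T_{x_0}M' \simeq \g_{-1}$, which is the desired conclusion.
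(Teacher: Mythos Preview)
Your argument is correct. Note, however, that the paper does not supply its own proof of this proposition: it is quoted as Theorem~7 of \cite{frances_zeghib} and used as a black box. So there is no in-paper proof to compare against.

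That said, your approach is very much in the spirit of the paper. The mechanism you use in your final step --- passing to the Cartan bundle, writing $\hat h(\hx_0) = \hx_0 \cdot \rho(h)$, and exploiting the equivariance $(R_p)_* \tilde{X_0} = \widetilde{\Ad_{\g}(p^{-1})X_0}$ to read off the action in the exponential chart $v \mapsto \pi(\exp(\hx_0,v))$ --- is precisely the computation the paper carries out in Section~\ref{s:proof_local_flow} (``Linear dynamic of $\phi_{A^*}^t$'') for the particular one-parameter group $\{\e^{tA}\}$. The paper needs no Levi-reduction step there because the holonomy has already been conjugated into the Cartan subspace $\a \subset \g_0$; your additional observation, that simplicity of $H'$ forces $\rho(H')$ to be semisimple and hence conjugable by $P^+$ into the Levi factor $G_0$, is exactly what is required to make the same exponential-chart argument work for the full group action rather than a single $\R$-split flow. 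A small remark on attribution: the conjugation of a semisimple (more generally reductive) subgroup of $P$ into a Levi factor is most cleanly obtained from Malcev's theorem on conjugacy of Levi subalgebras in $\p = \g_0 \ltimes \p^+$, which is perhaps a more direct reference than ``Mostow'' here.
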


In all this section, we note $x_0 \in M$ a global fixed point of the $H$-action and the isotropy representation of $H$ at $x_0$ is denoted by $\rho_{x_0} : h \in H \mapsto T_{x_0}h \in \CO(T_{x_0}M,g_{x_0})$.

\subsection{Describing the isotropy representation}
\label{ss:isotropy}

Any morphism from a connected simple Lie group into an abelian Lie group being trivial, $\rho_{x_0}$ takes values in $\SO(T_{x_0}M,g_{x_0}) \subset \CO(T_{x_0}M,g_{x_0}) = \R_{>0} \times \SO(T_{x_0}M,g_{x_0})$. Thus, the derivative of $\rho_{x_0}$ is an embedding of Lie algebras $f_{x_0} : \h \rightarrow \so(T_{x_0}M,g_{x_0})$ (if not, the isotropy would be trivial and $H$ would act trivially on a neighbourhood of $x_0$, contradicting the faithfulness of the action). Now, we establish the

\begin{lem}
Let $\rho : \so(1,2) \rightarrow \so(1,n-1)$ be a Lie algebra embedding. Then, there exists $g \in \SO(1,n-1)$ such that $\Ad(g) \circ \rho$ is the natural diagonal map
\begin{equation*}
A \in \so(1,2) \mapsto 
\begin{pmatrix}
A & \\
 & 0
\end{pmatrix}
\in \so(1,n-1).
\end{equation*}
\end{lem}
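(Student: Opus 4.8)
The plan is to classify the Lie algebra embeddings $\rho : \so(1,2) \to \so(1,n-1)$ up to conjugacy by analysing the associated representation on $\R^{1,n-1}$. The key observation is that $\so(1,2) \simeq \sl(2,\R)$ is simple, so composing the standard inclusion $\so(1,n-1) \hookrightarrow \gl(n,\R)$ with $\rho$ endows the vector space $V := \R^{1,n-1}$ with the structure of an $n$-dimensional representation of $\sl(2,\R)$, which moreover preserves the ambient Lorentzian quadratic form (coming from the signature $(1,n-1)$). The problem thus reduces to understanding finite-dimensional real representations of $\sl(2,\R)$ that admit an invariant quadratic form of signature exactly $(1,n-1)$, and showing the only such representation compatible with an \emph{embedding} (\textit{i.e.} a faithful $\rho$) is the direct sum of the three-dimensional adjoint/standard representation $\R^{1,2}$ with a trivial Riemannian factor $\R^{0,n-3}$.

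First I would decompose $V$ into irreducible $\sl(2,\R)$-submodules, $V = \bigoplus_i V_i$. The crucial constraint is that the invariant form has signature $(1,n-1)$, so in particular its restriction to the sum of the nontrivial summands must contain \emph{at most} one negative (time-like) direction. I would use the standard fact that on each irreducible $\sl(2,\R)$-module of dimension $d$, an invariant symmetric bilinear form is either zero or nondegenerate, and when $d \geq 2$ its signature is balanced enough to force multiple time-like directions: concretely, the Killing-type invariant form on the adjoint representation ($d=3$) has signature $(1,2)$, while irreducibles of dimension $d \geq 4$ (or pairs of isotropic modules) would necessarily contribute at least two negative directions to the form, violating the Lorentzian bound $\min(p,q) = 1$. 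Trivial summands contribute a definite (Riemannian) form, hence only positive directions. Combining these, the only way to accommodate signature $(1,n-1)$ with a faithful action is a single copy of the $3$-dimensional module carrying the $(1,2)$ form, plus trivial summands carrying a positive-definite $(0,n-3)$ form.

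Having pinned down the isomorphism type of the representation, I would then upgrade this to the statement about conjugacy inside $\SO(1,n-1)$. By the analysis above there is a $\rho$-invariant orthogonal splitting $V = V_{(1,2)} \oplus V_0$ with $V_{(1,2)}$ Lorentzian of signature $(1,2)$ carrying the standard $\so(1,2)$-action and $V_0$ Riemannian with trivial action. Any two such splittings differ by an element of the orthogonal group that respects the signatures, and since both the timelike line lies in $V_{(1,2)}$, one can find $g \in \SO(1,n-1)$ (possibly after adjusting by an element of $\O(1,n-1)$ and using connectedness, noting that the block structure preserves the time-orientation) sending the given decomposition to the standard one $\R^{1,2} \oplus \R^{0,n-3}$. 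Conjugating $\rho$ by this $g$ then realises it as the claimed block-diagonal map.

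The main obstacle I anticipate is the signature bookkeeping in the second paragraph: one must carefully rule out all irreducible modules of dimension $\geq 4$ and all configurations of paired isotropic or repeated low-dimensional modules, each of which a priori could carry an invariant form, and check that each such possibility forces $\min(p,q) \geq 2$. This requires knowing the precise signatures of invariant forms on the symmetric powers $\mathrm{Sym}^{d-1}(\R^2)$ of the standard $\sl(2,\R)$-module, and handling the subtlety that an invariant form need not be unique when multiplicities are present. A secondary, more routine, subtlety is the passage from $\O(1,n-1)$ to $\SO(1,n-1)$, ensuring the conjugating element can be chosen with determinant $+1$ while still matching the standard embedding; this is harmless since one can always compose with a reflection in the Riemannian block $V_0$ (available as soon as $n - 3 \geq 1$) or observe that the block map itself already lands in the identity component.
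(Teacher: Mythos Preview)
Your outline is correct and shares the paper's overall framework---view $\R^{1,n-1}$ as an $\sl(2,\R)$-module preserving the Lorentzian form, decompose, and show the nontrivial part must be a single $3$-dimensional piece---but the key technical step differs. You propose to rule out higher-dimensional irreducibles and isotropic pairs by computing the signatures of invariant forms on all $\mathrm{Sym}^k(\R^2)$ and checking each forces $\min(p,q)\geq 2$; you rightly flag this bookkeeping as the main obstacle. The paper sidesteps it entirely: having isolated one faithful irreducible $V\subset\R^{1,n-1}$ with Lorentzian restriction (arguing that a Riemannian restriction would map $\so(1,2)$ into a compact algebra, and a degenerate one would contradict irreducibility), it observes that the Cartan element $H$ acts on the $(d{+}1)$-dimensional irreducible with eigenvalues $-d,-d{+}2,\ldots,d$, whereas any $\R$-split one-parameter subgroup of $\SO(1,m{-}1)$ has eigenvalue pattern $(\lambda,-\lambda,0,\ldots,0)$ (since the Cartan subspace of $\so(1,m{-}1)$ has rank~$1$). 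This immediately forces $d=2$, i.e.\ $\dim V=3$, without any signature computation. The orthogonal complement $V^\perp$ is then Riemannian, hence carries the trivial action by the same compactness argument. Your route works and is perhaps more systematic; the paper's eigenvalue trick is shorter and avoids exactly the case analysis you were worried about.
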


\begin{proof}
Consider $\rho$ as a real representation of $\so(1,2)$ on $\R^{1,n-1}$. By simplicity, $\rho$ is completely reducible and must admit an irreducible faithful subrepresentation $V \subset \R^{1,n-1}$ (with $\dim V \geq 2$). We claim that the induced sub-Lorentzian quadratic form on $V$ is Lorentzian. Indeed, if not, $V$ would be Riemannian or non-negative and degenerate, with a $1$-dimensional kernel. In the first case, we would get a non-trivial morphism from $\so(1,2)$ into the compact Lie algebra $\so(\dim(V))$. In the second, $\rho$ would preserve the kernel which would be a proper invariant subspace of $V$, contradicting the irreducibility.

Thus, we have an orthogonal decomposition $\R^{1,n-1} = V \oplus V^{\perp}$, with $V^{\perp}$ a Riemannian subrepresentation. By the same argument as above, $\rho|_{V^{\perp}} = 0$. Moreover, $\dim V \geq 3$ since $\so(1,1)$ is abelian and $\rho$ is assumed faithful. Now, we claim that the only faithful irreducible representation of $\so(1,2)$ on a finite dimensional space that respects a Lorentz scalar product is $3$-dimensional. 

This comes from the classification of the finite-dimensional irreducible representations of $\sl(2,\R) \simeq \so(1,2)$. Let $(E,F,H)$ be the standard presentation of $\sl(2,\R)$. If $(V_d,\pi_d)$ is the $(d+1)$-dimensional irreducible representation of $\sl(2,\R)$, $\pi_d(H)$ acts diagonally on $V_d$ with eigenvalues $-d,-d+2,\ldots,d-2,d$. In our situation, $\rho(H)|_V$ moreover generates a one-parameter group of Lorentzian isometries of $V$. The elementary following result then gives $V \simeq V_2$.

\begin{sublemma*}
Let $\phi^t \in \SO(1,m-1)$, $m \geq 3$ be a linear flow acting diagonally on $\R^{1,m-1}$ with eigenvalues $\e^{\lambda_1 t}, \ldots, \e^{\lambda_m t}$. Then, up to permutation, $\lambda_1 = -\lambda_2$ and $\lambda_3 = \cdots = \lambda_m = 0$.
\end{sublemma*}

\begin{proof}
Since the morphism $\Ad_{\so(1,n-1} : \SO(1,n-1) \rightarrow \GL(\so(1,n-1))$ is algebraic, it sends $\phi^t$ on an $\R$-split flow of $\GL(\so(1,n-1))$: this means that $\phi^t$ is in a Cartan subgroup of $\SO(1,n-1)$. The results follows from the form of Cartan subspaces of $\so(1,n-1)$ given in Section \ref{ss:holonomy}.
\end{proof}

Finally, $\rho|_V : \so(1,2) \rightarrow \so(V)$ is an isomorphism for dimensional reasons, proving that we can conjugate in $\so(1,n-1)$ so that all the $\rho(A)$, $A \in \so(1,2)$, have the announced form.
\end{proof}

By connectedness of $H$, we have an isometric identification $(T_{x_0}M,g_{x_0}) \simeq \R^{1,n-1}$ such that
\begin{equation*}
\rho_{x_0}(H) =
\left \{
\begin{pmatrix}
A & \\
 & \id
\end{pmatrix}
, \ A \in \SO_0(1,2)
\right \}
\subset \SO_0(1,n-1).
\end{equation*}

\begin{cor}
When there exists a global fixed point, $H$ is (globally) isomorphic to $\PSL(2,\R)$.
\end{cor}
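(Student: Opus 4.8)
The plan is to show that the isotropy representation $\rho_{x_0} : H \to \SO_0(1,n-1)$ is injective; combined with the computation of its image that has just been carried out, this forces $\rho_{x_0}$ to be an isomorphism onto $\SO_0(1,2) \cong \PSL(2,\R)$, which is exactly the desired conclusion.

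First I would record what is already available. After the isometric identification $(T_{x_0}M, g_{x_0}) \simeq \R^{1,n-1}$, the image $\rho_{x_0}(H)$ has just been identified with the block-diagonal copy of $\SO_0(1,2)$ inside $\SO_0(1,n-1)$. Since $H$ is connected, $\rho_{x_0}(H)$ is the connected subgroup whose Lie algebra is the image of the derivative $f_{x_0}$, i.e. the diagonal copy of $\so(1,2)$, and this subgroup is precisely the stated $\SO_0(1,2)$. Thus $\rho_{x_0}$ is onto $\SO_0(1,2)$, and only injectivity remains.

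The heart of the matter is the injectivity of $\rho_{x_0}$, and this is where the linearization and the rigidity of the Cartan connection enter. Let $h \in \Ker \rho_{x_0}$, so that $h$ fixes $x_0$ and $T_{x_0}h = \id$. By Proposition \ref{prop:linearizability}, the $H$-action is conjugate near $x_0$ to its isotropy representation; since $\rho_{x_0}(h) = \id$, this forces $h$ to act as the identity on a whole neighbourhood $U$ of $x_0$. Now a conformal transformation of a connected Lorentz manifold that is the identity on a non-empty open set must be the identity everywhere: its lift $\hat{h}$ to the Cartan bundle $\hat{M}$ fixes every point of $\pi^{-1}(U)$, and since the $\Conf(M,[g])$-action on $\hat{M}$ is free with $\hat{h}$ completely determined by its value at a single point (Section \ref{ss:cartan_geometry}), we get $\hat{h} = \id$, hence $h = \id$. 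Therefore $\Ker \rho_{x_0}$ is trivial.

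Combining the two points, $\rho_{x_0}$ is a bijective Lie group homomorphism from $H$ onto $\SO_0(1,2)$, hence an isomorphism, and since $\SO_0(1,2) \cong \PSL(2,\R)$ we conclude that $H$ is globally isomorphic to $\PSL(2,\R)$. The only genuinely delicate step is the passage from ``$h$ trivial near $x_0$'' to ``$h$ trivial on all of $M$''; it relies on the rigidity of Cartan connections rather than on any analyticity hypothesis, so the argument in fact stays within the smooth category.
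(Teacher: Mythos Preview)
Your proof is correct and follows essentially the same approach as the paper: use the linearisation (Proposition~\ref{prop:linearizability}) to show that elements of $\ker\rho_{x_0}$ act trivially near $x_0$, then invoke the rigidity of conformal maps (freeness of the lifted action on the Cartan bundle, Section~\ref{ss:cartan_geometry}) together with faithfulness to conclude they are globally trivial. The only cosmetic difference is that the paper phrases the conclusion as ``$H$ has trivial center'' (noting first that $\rho_{x_0}$ is a covering, so its kernel is central) and then appeals to the classification of connected groups with Lie algebra $\sl(2,\R)$, whereas you go straight to ``$\rho_{x_0}$ is an isomorphism onto $\SO_0(1,2)$''.
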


\begin{proof}
The representation $\rho_{x_0}$ provides a surjective Lie group morphism $H \rightarrow \SO_0(1,2) \simeq \PSL(2,\R)$. This morphism is a local diffeomorphism, and then a covering. Therefore, any element in the center of $H$ is in the kernel of $\rho_{x_0}$. By Proposition \ref{prop:linearizability}, such an element acts trivially near $x_0$. Since any conformal diffeomorphism of $M$ acting trivially on a non-empty open set must be globally trivial, $H$ has a trivial center (we assumed the $H$-action faithful).
\end{proof}

Now we have a good understanding of the isotropy $\rho_{x_0}$, we can describe locally the action of (small elements of) $H$ near $x_0$.

\subsection{``Local orbits'' near a fixed point}

In this section, we fix a basis $(e_1,\ldots,e_n)$ of $T_{x_0}M$ such that $g_{x_0}$ reads $-x_1^2 + x_2^2 + \cdots + x_n^2$ and such that the isotropy representation has the form
\begin{equation*}
A \in \SO_0(1,2) \mapsto	
\begin{pmatrix}
A & \\
 & \id
\end{pmatrix}
\in \SO_0(1,n-1).
\end{equation*}
Let $E$ denote $\Span(e_1,e_2,e_3)$. By Proposition \ref{prop:linearizability} there exists $\mathcal{U} \subset \mathcal{U}' \subset E$ and $\mathcal{V} \subset E^{\perp}$ neighbourhoods of the origin, a neighbourhood $W$ of $x_0$ in $M$, a neighbourhood $V_H \subset H$ of the identity and a diffeomorphism $\psi : \mathcal{U}' \times \mathcal{V} \rightarrow W \subset M$ such that $\psi(0,0) = x_0$, $\forall h \in V_H$, $\rho_{x_0}(h) ( \mathcal{U} \times \mathcal{V}) \subset \mathcal{U}' \times \mathcal{V}$ and $\forall (u,v) \in \mathcal{U} \times \mathcal{V}$, $\psi(\rho_{x_0}(h)(u,v)) = h. \psi(u,v)$. Reducing the open sets if necessary, we assume that $\mathcal{U}$, $\mathcal{U}'$ (resp. $\mathcal{V}$) are open balls in $E$ (resp. $E^{\perp}$) with respect to $x_1^2+x_2^2+x_3^2$ (resp. $x_4^2+\cdots+x_n^2$).

For $v \in \mathcal{V}$, we note $U'_v := \psi(\mathcal{U}' \times \{v\})$ and $U_v := \psi(\mathcal{U} \times \{v\})$. The leaves of the foliation $\{U'_v\}_{v \in \mathcal{V}}$ of $W$ are individually ``locally $H$-invariant'', that is $\forall v \in \mathcal{V}$, $V_H. U_v \subset U'_v$. Moreover, each $U_v$ is partitioned in the following way. Note $q = -x_1^2 + x_2^2 + x_3^2$ the quadratic form induced by $g_{x_0}$ on $E$. In $U_v$, we define for $\lambda \leq 0$
\begin{align*}
\mathcal{O}_+(v,\lambda) & = \psi((\mathcal{U}\cap \{q=\lambda\} \cap \{x_1 > 0\}) \times \{v \}) \text{ and } \\
\mathcal{O}_-(v,\lambda) & = \psi((\mathcal{U}\cap \{q=\lambda\} \cap \{x_1 < 0\}) \times \{v \})
\end{align*}
and for $\lambda > 0$
\begin{align*}
\mathcal{O}(v,\lambda) & = \psi((\mathcal{U}\cap \{q=\lambda\}) \times \{v \}).
\end{align*}
For all $v \in \mathcal{V}$, the leaf $H_v$ splits into the disjoint union
\begin{equation*}
U_v = \{\psi(0,v)\} \cup \bigcup_{\lambda <0} \mathcal{O}_+(v,\lambda) \cup \mathcal{O}_+(v,0) \cup \bigcup_{\lambda > 0}\mathcal{O}(v,\lambda) \cup \mathcal{O}_-(v,0) \cup \bigcup_{\lambda <0} \mathcal{O}_-(v,\lambda).
\end{equation*}
(see Figure \ref{fig:local_orbits}).

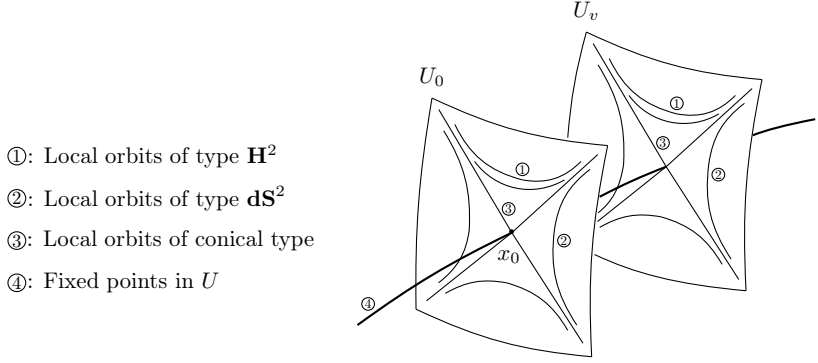
\begin{figure}
\hspace*{-4cm}
\begin{tikzpicture}[scale=.7]
\coordinate (1) at (-2.5,2.1) ;
\coordinate (2) at (-2.8,-1.9);
\coordinate (3) at (0.5,-2.8);
\coordinate (4) at (0.8,1.2) ;

\coordinate (5) at (.45,3.4) ;
\coordinate (6) at (0.3,1.4);
\coordinate (7) at (.7,-.75);
\coordinate (8) at (3.45,-1.5) ;
\coordinate (9) at (3.75,2.5) ;

\coordinate (10) at (-1,-.45) ;

\coordinate (11) at (-3.9,-2.2) ;

\coordinate (12) at (1.9,.8) ;

\coordinate (13) at (.65,.23) ;

\coordinate (14) at (3.2,1.27) ;

\coordinate (15) at (4.7,1.7) ;

\draw (1)to[bend right=5](2) ;
\draw (2)to[bend right=10](3) ;
\draw (1)to[bend right=10](4) ;
\draw (3)to[bend left=7](4) ;

\draw (10) node[scale=.5]{$\bullet$} ;

\draw[thick] (11)to[bend left=5](10) ;


\draw[thick] (13)to[bend left=3](12) ;

\draw[thick, shorten <= 7pt] (14)to[bend left=5](15) ;

\node[scale=.85] at (-1.05,-.9) {$x_0$} ;


\draw[shorten >= 5pt] (10)to(1) ;
\draw[shorten >= 5pt] (10)to(4) ;
\draw[shorten >= 5pt] (10)to(2) ;
\draw[shorten >= 6pt] (10)to(3) ;

\node[scale=.85] at (-2.5,2.5) {$U_0$} ;

\draw (-2.07,1.6)to[bend right=25](-.8,0.55) ;
\draw (-.8,0.55)to[bend right=28](0.3,0.9) ;


\draw (-1.7,.9)to[bend right=42](-0.01,0.55) ;

\coordinate (16) at (-1.1,-1.4) ;
\draw (-2.25,-1.6)to[bend left=20](16) ;
\draw (16)to[bend left=22](0.05,-2.3) ;

\coordinate (17) at (-.2,-.4) ;
\draw (0.22,-2.2)to[bend left=20](17) ;
\draw (17)to[bend left=22](0.47,0.75) ;

\coordinate (18) at (-1.8,-.4) ;
\draw (-2.28,1.48)to[bend left=17](18) ;
\draw (18)to[bend left=22](-2.377,-1.4) ;

\node[scale=.5, draw,circle,inner sep=.5pt] at (-.8,.75) {1};
\node[scale=.5, draw,circle,inner sep=.5pt] at (-.02,-.6) {2};
\node[scale=.5, draw,circle,inner sep=.5pt] at (-1.06,0) {3};

\node[scale=.5, draw,circle,inner sep=.5pt] at (-3.7,-1.8) {4};


\begin{scope} [shift={(2.9,1.25)}]
\coordinate (1) at (-2.5,2.1) ;
\coordinate (2) at (-2.8,-1.9);
\coordinate (3) at (0.5,-2.8);
\coordinate (4) at (0.8,1.2) ;

\draw (1)to[bend right=5](-2.85,0.1) ;
\draw[shorten <= 10pt] (2)to[bend right=10](3) ;
\draw (1)to[bend right=10](4) ;
\draw (3)to[bend left=7](4) ;

\node[scale=.85] at (-2.5,2.5) {$U_v$} ;

\draw[shorten >= 5pt] (-1,-.45)to(-2.5,2.1) ;
\draw[shorten >= 5pt] (-1,-.45)to(0.8,1.2) ;
\draw[shorten >= 13pt] (-1,-.45)to(-2.8,-1.9) ;
\draw[shorten >= 5pt] (-1,-.45)to(0.5,-2.8) ;

\draw (-2.07,1.6)to[bend right=25](-.8,0.55) ;
\draw (-.8,0.55)to[bend right=28](0.3,0.9) ;

\draw (-1.7,.9)to[bend right=42](-0.01,0.55) ;

\coordinate (16) at (-1.1,-1.4) ;
\draw (-2.25,-1.6)to[bend left=20](16) ;
\draw (16)to[bend left=22](0.05,-2.3) ;

\coordinate (17) at (-.2,-.4) ;
\draw (0.22,-2.2)to[bend left=20](17) ;
\draw (17)to[bend left=22](0.47,0.75) ;

\coordinate (18) at (-1.8,-.4) ;
\draw (-2.28,1.48)to[bend left=17](18) ;
\draw[shorten >= 2.5pt] (18)to[bend left=22](-2.377,-1.4) ;

\node[scale=.5, draw,circle,inner sep=.5pt] at (-.8,.75) {1};
\node[scale=.5, draw,circle,inner sep=.5pt] at (-.02,-.6) {2};
\node[scale=.5, draw,circle,inner sep=.5pt] at (-1.06,0) {3};
\end{scope}


\begin{scope} [shift={(1,-1)}]
\node[scale=.7, draw,circle,inner sep=.5pt] at (-11.3,2) {1};
\node[scale=.85] at (-8.75,2) {: Local orbits of type $\H^2$};

\node[scale=.7, draw,circle,inner sep=.5pt] at (-11.3,1.2) {2};
\node[scale=.85] at (-8.67,1.2) {: Local orbits of type $\dS^2$};

\node[scale=.7, draw,circle,inner sep=.5pt] at (-11.3,.4) {3};
\node[scale=.85] at (-8.41,.4) {: Local orbits of conical type};

\node[scale=.7, draw,circle,inner sep=.5pt] at (-11.3,-.4) {4};
\node[scale=.85] at (-9.3,-.4) {: Fixed points in $U$};
\end{scope}
\end{tikzpicture}
\caption{Local orbits near $x_0$}
\label{fig:local_orbits}
\end{figure}

\begin{lem}
\label{lem:local_orbit_in_global_orbit}
For every $v \in \mathcal{V}$, $\lambda \leq 0$ and $\mu > 0$, the point $\psi(0,v)$ is a global fixed point of the $H$-action and each one of the subsets $\mathcal{O}_{+/-}(v,\lambda)$ and $\mathcal{O}(v,\mu)$ is contained in a single $H$-orbit.
\end{lem}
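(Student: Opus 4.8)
The plan is to transport everything through the linearizing chart $\psi$ and reduce the statement to the orbit structure of the linear action $\rho_{x_0}(H) = \SO_0(1,2)$ on $(E,q)$, closing the argument by an open-and-closed (connectedness) argument. The fixed-point assertion I would dispatch first: since $\rho_{x_0}(h)$ acts as the identity on $E^\perp$ and fixes the origin of $E$, it fixes $(0,v) \in \mathcal{U}' \times \mathcal{V}$ for every $h$, and the conjugation relation $\psi(\rho_{x_0}(h)(u,v)) = h.\psi(u,v)$ then gives $h.\psi(0,v) = \psi(0,v)$ for all $h \in V_H$. Since $V_H$ is a neighbourhood of the identity in the connected group $H$ it generates $H$, so the stabiliser of $\psi(0,v)$ is all of $H$ and this point is a global fixed point.

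For the orbit assertion I would first record the orbit structure of $\SO_0(1,2)$ on $(E,q) \simeq \R^{1,2}$: for $\lambda < 0$ each sheet $\{q = \lambda,\, \pm x_1 > 0\}$ is a single orbit (a copy of $\H^2$); for $\lambda = 0$ each half null cone $\{q = 0,\, \pm x_1 > 0\}$ is a single orbit; and for $\mu > 0$ the one-sheeted hyperboloid $\{q = \mu\}$ is a single orbit (a copy of $\dS^2$). In every case the orbit is $2$-dimensional, hence equal to the whole level set, so the orbit map $h \mapsto \rho_{x_0}(h)u$ is a submersion onto the ambient level set; in particular $\rho_{x_0}(V_H)\cdot u$ contains a neighbourhood of $u$ inside that level set. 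I would also note that, $\mathcal{U}$ being a round ball, each of the pieces $\Sigma := \mathcal{U} \cap \{q = \lambda,\, x_1 > 0\}$ and its analogues is connected: realizing the sheet or hyperboloid as a graph, resp. a cylinder, its intersection with a ball is a disk, a punctured disk, or a band.

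The core of the proof will then be the following. Fixing such a piece $\Sigma$ and a base point $u_0 \in \Sigma$, I set $\Omega := \{u \in \Sigma : \psi(u,v) \in H.\psi(u_0,v)\}$ and show it is open and closed in $\Sigma$. If $u \in \Omega$ and $u' \in \Sigma$ is close enough to $u$, the submersion property gives $u' = \rho_{x_0}(h)u$ for some $h \in V_H$, and the conjugation relation yields $\psi(u',v) = h.\psi(u,v) \in H.\psi(u_0,v)$, so $u' \in \Omega$ and $\Omega$ is open. Running the same computation backwards shows $\Sigma \setminus \Omega$ is open as well, since an $\Omega$-point $u' = \rho_{x_0}(h)u$ close to $u$ would force $\psi(u,v) = h^{-1}.\psi(u',v) \in H.\psi(u_0,v)$. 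Connectedness of $\Sigma$ then gives $\Omega = \Sigma$, i.e. $\mathcal{O}_+(v,\lambda) = \psi(\Sigma \times \{v\})$ lies in the single orbit $H.\psi(u_0,v)$, and the cases $\mathcal{O}_-(v,\lambda)$, $\mathcal{O}(v,\mu)$ are identical. The point that makes a connectedness argument necessary---and the step I expect to require the most care---is that the conjugation with the linear model only holds for $h \in V_H$ and $(u,v) \in \mathcal{U} \times \mathcal{V}$, whereas a level-set piece is far from being a single \emph{local} orbit; the submersion property is precisely what allows local moves to chain together and cover all of $\Sigma$.
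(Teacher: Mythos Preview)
Your proof is correct and follows essentially the same approach as the paper: both reduce to the connectedness of the level-set pieces $\Sigma$ inside the Euclidean ball $\mathcal{U}$ and use that $V_H$ moves points locally within $\Sigma$ via the linearizing relation. The only cosmetic difference is that the paper phrases the connectedness step as a finite chain $u' = g_s\cdots g_1.u$ with each intermediate point in $\mathcal{U}$, whereas you formalize it as an open-and-closed argument via the submersion property of the orbit map; these are two packagings of the same idea.
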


\begin{proof}
The point $\psi(0,v)$ is fixed by $V_H$, and consequently by all of $H$ by connectedness.

We only treat the case of $\mathcal{O}_+(v,\lambda)$, $\lambda < 0$, the other ones being similar. The level set $\{q=\lambda\} \cap \{x_1 > 0\}$ is a $2$-dimensional upper hyperboloid of the Lorentzian space $(E,q)$. Since $\mathcal{U}$ has been chosen to be a Euclidian ball in $E$, if non-empty, the intersection of this hyperboloid with $\mathcal{U}$ is connected. Therefore, for any two points $u,u'$ in this intersection, there exists finitely many $g_1,\ldots,g_s \in V_H$ such that $u' = g_s \ldots g_1 . u$ and $g_i \ldots g_1 . u \in \mathcal{U}$ for all $i \leq s$. This shows that $\psi(u',v) = g_s \ldots g_1 . \psi(u,v)$.
\end{proof}

Each one of these sets being an open subset of a global $H$-orbit, we call them \textit{local $H$-orbits}. We distinguish them by calling
\begin{itemize}
\item ``local $H$-orbit of type $\H^2$'' any $\mathcal{O}_{+/-}(v,\lambda)$ with $\lambda < 0$ ;
\item ``local $H$-orbit of type $\dS^2$'' any $\mathcal{O}(v,\lambda)$ with $\lambda > 0$ ;
\item ``local $H$-orbit of conical type'' any $\mathcal{O}_{+/-}(v,0)$.
\end{itemize}
For every $v \in \mathcal{V}$, if $\epsilon$ denotes the radius of the ball $\mathcal{U}$, we define in $U_v$ the rays $\Delta_T^+(v)= \{\psi(t e_1 , v), \ t \in ]0,\epsilon[\}$, $\Delta_T^-(v)= \{\psi(t e_1 , v), \ t \in ]-\epsilon,0[\}$, $\Delta_L^+(v)= \{\psi(t (e_1+e_3) , v), \ t \in ]0,\epsilon[ \}$, $\Delta_L^-(v)= \{\psi(t (e_1+e_3) , v), \ t \in ]-\epsilon,0[ \}$, and the line $\Delta_S(v) = \{\psi(t e_3 , v), \ t \in ]-\epsilon,\epsilon[ \}$.

In $U_v$, every local $H$-orbit of type $\H^2$ or $\dS^2$ meets a unique $\Delta_T^+(v)$, $\Delta_T^-(v)$ or $\Delta_S(v)$ exactly once and $\Delta_L^+(v) \subset \mathcal{O}^+(v,0)$ and $\Delta_L^-(v) \subset \mathcal{O}^-(v,0)$ (same picture than in Minkowski space).

\vspace*{0.2cm}

Finally, in $\SO_0(1,2)$ acting on $(E,q) = \R^{1,2}$, we note $S_e$ the stabilizer of $e_1$, $S_h$ the stabilizer of $e_3$ and $S_u$ the stabilizer of $e_1 + e_3$. We have $S_e \simeq \SO(2)$, $S_h \simeq \SO_0(1,1)$ and $S_u$ is a unipotent subgroup of $\SO_0(1,2)$. If $x \in U$, let $(H_x)_0$ denote the identity component of the stabilizer of $x$.

\begin{lem}
\label{lem:stabilizer_local_orbits}
Let $v \in \mathcal{V}$. Then,
\begin{itemize}
\item for any $x \in \Delta_T^{+/-}(v)$, we have $(H_x)_0 = S_e$ ;
\item for any $x \in \Delta_S(v)$, we have $(H_x)_0 = S_h$ ;
\item for any $x \in \Delta_L^{+/-}(v)$, we have $(H_x)_0 = S_u$.
\end{itemize}
\end{lem}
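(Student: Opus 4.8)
The plan is to reduce the computation of each stabilizer to a purely linear problem in $\SO_0(1,2)$ acting on $(E,q) \simeq \R^{1,2}$, using the linearization $\psi$ together with the identification $H \simeq \SO_0(1,2)$ provided by the Corollary above. Concretely, since $\rho_{x_0} : H \to \SO_0(1,2)$ is an isomorphism, I identify $H$ with $\SO_0(1,2)$, and for $h \in V_H$ the linearization property gives $h.\psi(u,v) = \psi(\rho_{x_0}(h)(u,v))$. As the isotropy acts by $\begin{pmatrix} A & \\ & \id\end{pmatrix}$ with $A \in \SO_0(1,2)$ on $E \oplus E^{\perp}$, this reads $h.\psi(u,v) = \psi(Au,v)$, the $E^{\perp}$-coordinate $v$ being left fixed.

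Next I would pass to Lie algebras. Fix $x = \psi(u_0,v)$ with $u_0 \in E$ a nonzero multiple of $e_1$, $e_3$ or $e_1+e_3$. Writing $f_{x_0} : \h \to \so(1,2)$ for the (isomorphic) derivative of $\rho_{x_0}$, the relation above shows that for $X \in \h$ the flow $\phi_X^t$ fixes $x$ near $t=0$ exactly when $\exp(t\,f_{x_0}(X))\,u_0 = u_0$, that is, $X \in \h_x \iff f_{x_0}(X)u_0 = 0$. Hence $f_{x_0}$ carries $\h_x$ onto the annihilator $\{B \in \so(1,2) : Bu_0 = 0\}$, which is precisely the Lie algebra of $\Stab_{\SO_0(1,2)}(u_0)$. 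Since the identity component of a Lie subgroup is determined by its Lie algebra, $(H_x)_0$ is identified with $\big(\Stab_{\SO_0(1,2)}(u_0)\big)_0$, and a nonzero scaling of $u_0$ does not change this stabilizer.

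It then remains to identify the three stabilizers inside $\SO_0(1,2)$. For $u_0 \in \R_{\neq 0}\,e_1$ (time-like) one has $\Stab(e_1) = S_e \simeq \SO(2)$, which is connected, so $(H_x)_0 = S_e$ on $\Delta_T^{\pm}(v)$. For $u_0 \in \R_{\neq 0}\,e_3$ (space-like), any element fixing $e_3$ preserves $e_3^{\perp} \simeq \R^{1,1}$ with determinant $1$; the time-orientation-preserving constraint built into $\SO_0(1,2)$ eliminates the non-identity component of $\SO(1,1)$, leaving $\Stab(e_3) = S_h \simeq \SO_0(1,1)$, connected, so $(H_x)_0 = S_h$ on $\Delta_S(v)$. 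For $u_0 \in \R_{\neq 0}\,(e_1+e_3)$ (light-like), the pointwise stabilizer of a null vector is the one-parameter unipotent group $S_u$ (the Borel of the null line with its scaling part removed), again connected, so $(H_x)_0 = S_u$ on $\Delta_L^{\pm}(v)$.

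There is no serious obstacle in this argument; the only point deserving attention is the passage from the global stabilizer $H_x$ to the linear stabilizer of $u_0$. This is legitimate because $\psi$ conjugates the $H$-action to the isotropy representation on a genuine neighbourhood $V_H$ of $\id$, so the infinitesimal stabilizer $\h_x$ — and with it the connected group $(H_x)_0$ it generates — is faithfully read off from $f_{x_0}$ and the linear stabilizer of $u_0$, independently of the behaviour of the action away from the identity.
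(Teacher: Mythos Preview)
Your argument is correct and follows essentially the same route as the paper's proof: both use the linearization $\psi$ to read off $(H_x)_0$ from the linear stabilizer of $u_0$ in $\SO_0(1,2)$, and then conclude by connectedness of $S_e$, $S_h$, $S_u$. The paper's version is simply terser---it observes that $(H_x)_0$ is one-dimensional (since the local orbit is $2$-dimensional) and contains a neighbourhood of the identity in the relevant $S_\bullet$ by linearization---whereas you spell out the Lie-algebra identification $\h_x \simeq \{B \in \so(1,2) : Bu_0 = 0\}$ and check explicitly that each stabilizer is connected.
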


\begin{proof}
In the three cases, $(H_x)_0$ has dimension $1$ and by the linearization property, it contains a neighbourhood of the identity of $S_e$ (resp. $S_h$, $S_u$) and it is sufficient since $S_e$, $S_h$ and $S_u$ are connected.
\end{proof}

\begin{lem}
\label{lem:equality_local_orbit}
Let $v \in \mathcal{V}$, $\lambda < 0$ and $x \in \mathcal{O}^+(v,\lambda)$. Then we have $(H.x) \cap U = \mathcal{O}^+(v,\lambda)$. The same is true for $\mathcal{O}^-(v,\lambda).$
\end{lem}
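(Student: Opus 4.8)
The plan is to prove the two inclusions separately. The inclusion $\mathcal{O}^+(v,\lambda) \subseteq (H.x) \cap U$ is immediate from Lemma~\ref{lem:local_orbit_in_global_orbit}, so the whole content lies in the reverse inclusion. The guiding idea is that the conjugacy class of the identity component of a point-stabilizer is a \emph{global} invariant of the $H$-orbit, and that the non-trivial types of local orbit are separated by the isomorphism type of this stabilizer (elliptic, hyperbolic, unipotent). Concretely, let $y \in (H.x) \cap U$ and write $y = h.x$. Since $x \in \mathcal{O}^+(v,\lambda)$ with $\lambda < 0$, I first replace $x$ by the unique point of $\mathcal{O}^+(v,\lambda) \cap \Delta_T^+(v)$ — which lies in the same global orbit by Lemma~\ref{lem:local_orbit_in_global_orbit} — so that Lemma~\ref{lem:stabilizer_local_orbits} gives $(H_x)_0 = S_e$, a \emph{compact} one-parameter group. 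Then $(H_y)_0 = h (H_x)_0 h^{-1}$ is again compact and one-dimensional. Among the points of $U$ this excludes the global fixed points (whose stabilizer is all of $H$) together with the $\dS^2$-type and conical-type local orbits, whose stabilizers $S_h \simeq \SO_0(1,1)$ and $S_u$ are non-compact by Lemma~\ref{lem:stabilizer_local_orbits}. Hence $y$ must belong to some $\H^2$-type local orbit $\mathcal{O}^{\epsilon}(v',\lambda')$ with $\lambda' < 0$ and $\epsilon \in \{+,-\}$.

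It then remains to identify this orbit with $\mathcal{O}^+(v,\lambda)$, and here I would move $y$ inside its local orbit to the unique point $y_\ast$ of $\mathcal{O}^\epsilon(v',\lambda') \cap \Delta_T^\epsilon(v')$, which stays in $H.x$; by Lemma~\ref{lem:stabilizer_local_orbits} we then have $(H_{y_\ast})_0 = S_e$ as well. Writing $y_\ast = h'.x$, the relation $S_e = (H_{y_\ast})_0 = h'(H_x)_0(h')^{-1} = h' S_e (h')^{-1}$ shows that $h'$ normalizes $S_e$. The key computation is $N_H(S_e) = S_e$: by the Corollary establishing that a global fixed point forces $H$ to be globally isomorphic to $\PSL(2,\R)$, the subgroup $S_e \simeq \SO(2)$ is maximal compact, so the homogeneous space $H/S_e$ is the hyperbolic plane $\H^2$ on which $H$ acts by orientation-preserving isometries and $S_e$ stabilizes a point $o$; since $o$ is the unique $S_e$-fixed point of $\H^2$, any element normalizing $S_e$ fixes $o$ and hence lies in $S_e$. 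Therefore $h' \in S_e \subseteq (H_x)_0 \subseteq H_x$, so $y_\ast = h'.x = x$. Consequently the rays $\Delta_T^+(v)$ and $\Delta_T^\epsilon(v')$ share the point $x$, which forces $\epsilon = +$, $v' = v$ and, reading off the parameter along the ray, $\lambda' = \lambda$. Thus $\mathcal{O}^\epsilon(v',\lambda') = \mathcal{O}^+(v,\lambda)$ and $y \in \mathcal{O}^+(v,\lambda)$, as desired.

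The main obstacle is exactly the one this scheme is designed to overcome: \emph{a priori} the global orbit $H.x$ could re-enter the neighbourhood $U$ through a different sheet, a different level $\lambda'$, or a different leaf $v'$, because only the small elements $V_H$ respect the product structure of the linearising chart of Proposition~\ref{prop:linearizability}. Compactness of $S_e$ disposes of the wrong \emph{types} of local orbit, but separating $\mathcal{O}^+(v,\lambda)$ from the remaining $\H^2$-type orbits is precisely where the rigidity $N_H(S_e) = S_e$ is required; this is also the point where the \emph{global} isomorphism $H \simeq \PSL(2,\R)$ (and not merely a local one) is essential, since a non-trivial centre could spoil the normalizer computation. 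I would finally double-check the harmless reductions ``move a point to its ray'': each relies only on the facts, recorded before the statement, that an $\H^2$-type local orbit is connected, is contained in a single global orbit, and meets $\Delta_T^{\pm}$ exactly once.
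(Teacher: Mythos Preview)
Your proof is correct and follows essentially the same line as the paper's: reduce to $x \in \Delta_T^+(v)$, use the conjugacy class of the stabiliser to force $y$ into an $\H^2$-type local orbit, move $y$ to its ray, and conclude via $N_H(S_e)=S_e$ that the two ray-points coincide. The only cosmetic differences are that the paper first upgrades $(H_x)_0=S_e$ to $H_x=S_e$ (using the same normaliser fact) before comparing stabilisers, and that it merely asserts $N_{\SO_0(1,2)}(S_e)=S_e$ where you justify it through the action on $\H^2$.
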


\begin{proof}
The inclusion $\supset$ has been proved in Lemma \ref{lem:local_orbit_in_global_orbit}. In particular, we can assume that $x$ belongs to $\Delta^+(v)$. Now observe that the normalizer of $S_e$ in $\SO_0(1,2)$ is $S_e$ itself. Since $S_e$  is the identity component of the stabilizer $H_x$ of $x$ in $H$, it is normalized by $H_x$. Thus, $S_e$ is exactly the stabilizer of $x$.

Let $y \in (H.x) \cap U$. Its stabilizer $H_y$ is conjugated to $S_e$ in $H$ and cannot admit a conjugate of $S_h$ nor $S_u$ as subgroup. By Lemma \ref{lem:stabilizer_local_orbits}, this implies that $y$ belongs to a local $H$-orbit of type $\H^2$: we have $\lambda' < 0$ and $v' \in \mathcal{V}$ such that $y \in \mathcal{O}^+(v',\lambda')$ or $\mathcal{O}^-(v',\lambda')$. In both cases, by Lemma \ref{lem:local_orbit_in_global_orbit} there is $h \in H$ such that $z := h.y \in \Delta_T^+(v')$ or $\Delta_T^-(v')$ and $z$ belongs to the same local $H$-orbit than $y$. In particular its stabilizer is $H_{z} = S_e$. Thus, if $h_0 \in H$ is such that $z = h_0.x$, it normalizes $S_e$. So, we have $h_0 \in S_e$, and then $z = x$, proving that $x$ and $y$ belong to the same local $H$-orbit.
\end{proof}

\begin{cor}
There exists an $H$-orbit $\mathcal{O} \subset F_{\leq 2}$ such that $\bar{\mathcal{O}}$ does not contain any fixed point.
\end{cor}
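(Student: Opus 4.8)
The plan is to exhibit the required orbit explicitly as the \emph{global orbit passing through a local orbit of type $\H^2$}, and then to rule out accumulation at fixed points using the rigidity of the intersection with a linearizing chart furnished by Lemma \ref{lem:equality_local_orbit}. Concretely, I would fix the linearizing neighbourhood $W$ of $x_0$ constructed above and choose $\lambda < 0$ with $|\lambda|$ smaller than the square of the radius $\epsilon$ of $\mathcal{U}$, so that the local orbit $\mathcal{O}^+(0,\lambda)$ is non-empty. Let $\mathcal{O} := H.z$ for some $z \in \mathcal{O}^+(0,\lambda)$. By Lemma \ref{lem:stabilizer_local_orbits} the identity component of the stabiliser of $z$ is conjugate to the elliptic subgroup $S_e \simeq \SO(2)$, so $\dim \mathcal{O} = 2$ and hence $\mathcal{O} \subset F_{\leq 2}$; since $F_{\leq 2}$ is closed, we also get $\bar{\mathcal{O}} \subset F_{\leq 2}$.

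It then remains to prove that $\bar{\mathcal{O}}$ contains no fixed point, and I would argue by contradiction. Suppose $y_0 \in \bar{\mathcal{O}}$ is a global fixed point of $H$. The study of the isotropy representation carried out in Section \ref{ss:isotropy} applies verbatim at $y_0$, so $y_0$ admits its own linearizing neighbourhood $U^{y_0}$ with the very same stratification into local orbits of type $\H^2$, $\dS^2$, conical type, and fixed points. Choose a sequence $z_k \in \mathcal{O}$ with $z_k \to y_0$; then $z_k \in U^{y_0}$ for $k$ large. Every point of $\mathcal{O}$ has stabiliser conjugate in $H$ to $S_e$, and $S_e$, $S_h$, $S_u$ are pairwise non-conjugate (elliptic, hyperbolic, unipotent); so by Lemma \ref{lem:stabilizer_local_orbits} the point $z_k$ can lie neither on a local orbit of type $\dS^2$ nor of conical type, nor be a fixed point, and must therefore sit on a local orbit of type $\H^2$. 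Lemma \ref{lem:equality_local_orbit}, applied in the chart at $y_0$, then forces $\mathcal{O} \cap U^{y_0} = (H.z_k) \cap U^{y_0}$ to be a \emph{single} local orbit $\mathcal{O}^{+/-}(v',\lambda')$ with $\lambda' < 0$.

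The contradiction is then purely quadratic: on $\{q = \lambda'\}$ with $q = -x_1^2 + x_2^2 + x_3^2$ and $\lambda' < 0$ one has $x_1^2 = x_2^2 + x_3^2 - \lambda' \geq -\lambda' > 0$, so every point of $\mathcal{O}^{+/-}(v',\lambda')$ stays at distance bounded below by a positive constant from all the fixed points of $U^{y_0}$, in particular from $y_0$. This contradicts $z_k \to y_0$, and finishes the argument; taking $F := \bar{\mathcal{O}}$ also yields Proposition \ref{prop:no_fixed_point} in the presence of a fixed point.

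I expect the delicate point, and the real crux, to be the step showing that $\mathcal{O} \cap U^{y_0}$ reduces to a single hyperbolic-type local orbit. It is precisely here that the conjugacy-invariance of the stabiliser type (Lemma \ref{lem:stabilizer_local_orbits}) combines with the rigidity statement of Lemma \ref{lem:equality_local_orbit} to exclude the \textit{a priori} possibility that $\mathcal{O}$ spirals into $y_0$ by meeting $U^{y_0}$ along infinitely many distinct local orbits; the final quadratic estimate is then routine.
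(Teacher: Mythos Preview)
Your argument is correct and follows essentially the same route as the paper's proof: choose an orbit through a local $\H^2$-type piece, use the stabiliser-type trichotomy of Lemma~\ref{lem:stabilizer_local_orbits} to force any intersection with a linearizing chart around a fixed point to lie on a local $\H^2$-orbit, and then invoke Lemma~\ref{lem:equality_local_orbit} to conclude that this intersection is a single such piece, which stays away from the fixed point. The paper is more terse (it simply observes that $\mathcal{O}\cap U$ is a submanifold not containing $x_0$), while you spell out the quadratic lower bound explicitly, but the content is the same.
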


\begin{proof}
What we have seen before shows that there exists $x \in M$ with stabilizer $S_e$ (we assumed that there exists a fixed point). Let $\mathcal{O}$ be its $H$-orbit. Assume that $\mathcal{O}$ meets the neighbourhood $U$ of $x_0$. By the same argument than in the previous proof, any point of $\mathcal{O} \cap U$ must be on a local $H$-orbit of type $\H^2$. Then, Lemma \ref{lem:equality_local_orbit} says that $\mathcal{O} \cap U$ is a submanifold of $U$ that does not contain $x_0$. This proves that $x_0$ (any fixed point) cannot be in the closure of $\mathcal{O}$. 
\end{proof}

Thus, Proposition \ref{prop:no_fixed_point} is proved if we take $F := \bar{O}$ as in the previous corollary.

\section{Proof of the main theorem}
\label{s:conformal_flatness}

What we have done so far shows that if $H \simeq_{\text{loc}} \PSL(2,\R)$ acts conformally and essentially on $(M,g)$, then there exists an $H$-invariant closed subset $F \subset M$ in which all $H$-orbits have dimension $1$ or $2$. From now on, the Lorentz manifold $(M,g)$ is assumed real-analytic.

\vspace*{0.2cm}

We will easily see that if there exists a $1$-dimensional orbit in $F$, then $H$ contains a flow admitting a singularity on this orbit and which is non-linearizable near its singularity. A result of Frances and Melnick will directly ensure the conformal flatness of $(M,g)$. We postpone this question to Section \ref{ss:orbit_dim1}. So, we can assume that every $H$-orbit in $F$ is $2$-dimensional (it happens for instance when $\SO_0(1,2)$ acts on a Hopf manifold). This situation is treated in the next section and is the core of the proof.

\subsection{Conformal flatness near degenerate $2$-dimensional orbits}
\label{ss:orbit_dim2}

Let $(X,Y,Z)$ be an $\sl(2)$-triple in $\h$ and let $S < H$ be the connected subgroup with Lie algebra $\Span(X,Y) \simeq \aff(\R)$. 

\vspace*{0.2cm}

Since $F$ is $S$-invariant, Proposition \ref{prop:zimmer_conformal} applies and gives a point $x_0 \in F$ such that $\Ad(S)\h_{x_0} \subset \h_{x_0}$ and the induced action $\bar{\Ad}(S) \curvearrowright \h / \h_{x_0}$ is conformal with respect to the quadratic form $q_{x_0}$ obtained by restricting the metric $g_{x_0}$ to $T_{x_0}(H.x_0) \simeq \h / \h_{x_0}$. Because $x_0 \in F$, $\h_{x_0}$ is a line in $\h$ and we must have $\h_{x_0} = \R.Y$. Since $\bar{\Ad}(\e^{tY})\bar{X} = \bar{X}$ and $\bar{\Ad}(\e^{tY})\bar{Z} = \bar{Z} + t \bar{X}$, we see that $q_{x_0}$ is necessarily degenerate, with $\ker q_{x_0} = \R.X (\text{ mod. } \h_{x_0})$. Thus, the orbit $H.x_0$ is degenerate and at $x_0$, the vector field $X$ gives the direction of the kernel. In particular, the integral curve $\{\phi_X^t(x_0)\}$ is an immersed light-like curve.	

\vspace*{0.2cm}

The following proposition says that moreover, it is possible to choose the point $x_0$ so that there exists a local conformal vector field defined near $x_0$, with a singularity at $x_0$ and whose local flow \textit{does not come from the action of $H$}. This flow will play a central role in proving the conformal flatness. Its existence is derived from a result of Gromov's theory of rigid geometric structures (\cite{gromov}), and relies on the analyticity of $(M,g)$.

\begin{prop}
\label{prop:local_vector_field}
There exists a point $x_0 \in F$ such that the conclusions of Proposition \ref{prop:zimmer_conformal} remain valid and such that there exists a local conformal vector field $A^*$ defined on a neighbourhood $U$ of $x_0$, such that $A^*(x_0) = 0$, $[A^*,X] = 0$ on $U$ and whose local flow $\phi_{A^*}^t$ is defined for $t \geq 0$ on all of $U$ and is linearizable on $U$. Precisely, there exists a chart $\psi : \mathcal{U} \rightarrow U$, where $\mathcal{U} \subset T_{x_0}M$ is a neighbourhood of the origin, such that $T_0 \psi = \id_{T_{x_0}M}$ and there are coordinates $(u_1,\ldots,u_n)$ on $T_{x_0}M$ in which the metric $g_{x_0}$ reads $2u_1u_n + u_2^2 + \cdots + u_{n-1}^2$, with $X_{x_0} = \frac{\partial}{\partial u_1}(x_0)$ and such that
\begin{equation*}
\psi^{-1} \phi_{A^*}^t \psi =
\begin{pmatrix}
1 & & & & \\
 & \e^{-t} & & & \\
 & & \ddots & & \\
 & & & \e^{-t} & \\
 & & & & \e^{-2t}
\end{pmatrix}
\text{ for all } t \geq 0.
\end{equation*}
\end{prop}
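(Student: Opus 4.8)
The plan is to work inside the normalized Cartan geometry $(\hat{M}, \omega)$ of type $(G,P) = (\PO(2,n), P)$ associated to $(M,[g])$, and to produce $A^*$ as the conformal field induced by a local $\omega$-Killing field whose value at a well-chosen $\hx_0$ over $x_0$ is a prescribed element $A \in \g_0 = \co(1,n-1) \subset \p$. Here I use the grading $\g = \g_{-1} \oplus \g_0 \oplus \g_1$ with $\g_{-1} \simeq \R^{1,n-1}$, $\p = \g_0 \oplus \g_1$, and $\g/\p \simeq T_{x_0}M$. As in the proof of Proposition \ref{prop:zimmer_conformal}, $X,Y,Z$ lift to $\omega$-Killing fields $\hat X, \hat Y, \hat Z$ and $\iota_{\hx}(V) := \omega_{\hx}(\hat V)$ gives the equivariant map $\iota : \hat M \to \Mon(\h, \g)$. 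The first step is to sharpen the choice of Zimmer point: instead of the order-$0$ datum $\iota$, I would use the $P$-equivariant map $\hx \mapsto (\iota_{\hx}, \kappa(\hx), D\kappa(\hx), \ldots)$ recording $\iota$ together with the full jet of the Cartan curvature $\kappa$. This still factors through a stratified algebraic quotient by $\bar P$ carrying an action of $\Ad_{\h}(S)$, and since $S \simeq \Aff^+(\R)$ is amenable with Zariski closure having no cocompact algebraic subgroup, the Borel density argument of Proposition \ref{prop:zimmer_conformal} yields $x_0 \in F$, still satisfying its conclusions, at which $S$ fixes the whole jet modulo $\bar P$. Concretely, for each $s \in S$ there is $p(s) \in P$ with $\iota_{\hx_0} \circ \Ad_{\h}(s) = \Ad_{\g}(p(s)) \circ \iota_{\hx_0}$ and $p(s)$ stabilizing the curvature jet at $\hx_0$.

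Applying this to $\e^{tX}$ and using that $\Ad_{\h}(\e^{tX})$ is $\R$-split on $\h$ (eigenvalues $1, \e^t, \e^{-t}$ on $X, Y, Z$), I would arrange that $p(\e^{tX}) = \e^{tA}$ is a one-parameter subgroup; since $\Ad_{\g}(\e^{tA})$ is $\R$-split on $\iota_{\hx_0}(\h)$ one may take it $\R$-split in $G$, and conjugating $\hx_0$ by the unipotent radical of $P$ brings $A$ into the Levi factor $\g_0$. Differentiating the conjugacy relation gives $\ad(A) \circ \iota_{\hx_0} = \iota_{\hx_0} \circ \ad(X)$ on $\h$; in particular $[A, \iota_{\hx_0}(X)] = 0$, $[A,\iota_{\hx_0}(Y)] = \iota_{\hx_0}(Y)$ and $[A, \iota_{\hx_0}(Z)] = -\iota_{\hx_0}(Z)$. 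Projecting to $\g_{-1} \simeq T_{x_0}M$ and using (as established before the proposition) that $\varphi_{\hx_0}(X_{x_0})$ spans the light-like kernel of $q_{x_0}$ while $\varphi_{\hx_0}(Z_{x_0})$ is space-like and orthogonal to it, $\ad(A)$ fixes a null vector and acts by $-1$ on an orthogonal space-like vector. Since $A \in \co(1,n-1)$ is $\R$-split, these constraints force $\ad(A)|_{\g_{-1}} = \mathrm{diag}(0,-1,\ldots,-1,-2)$ in suitable null coordinates, i.e. $A$ is exactly the element in the statement; this produces the coordinates in which $g_{x_0} = 2u_1 u_n + u_2^2 + \cdots + u_{n-1}^2$ and $X_{x_0} = \frac{\partial}{\partial u_1}(x_0)$.

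The crux — the step forcing the analyticity hypothesis — is to integrate $A$ into an honest local $\omega$-Killing field. Because $\e^{tA}$ stabilizes the full curvature jet at $\hx_0$, the element $A$ is a \emph{Killing generator} in the sense of Gromov's integrability (``Frobenius'') theorem, in the Cartan-geometric formulation of Melnick (\cite{melnick}, \cite{article1}). In the real-analytic category this theorem guarantees that such a generator integrates to a genuine local Killing field $\hat{A^*}$ of $\omega$ near $\hx_0$ with $\omega_{\hx_0}(\hat{A^*}) = A$, which descends to a local conformal field $A^*$ on a neighbourhood $U$ of $x_0$. This is precisely where smoothness would be insufficient: for $C^\infty$ structures the integrability theorem only holds on an open dense set, which need not contain our dynamically distinguished $x_0$. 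I expect the main difficulty to lie exactly in this passage from the order-$0$ Zimmer point to the all-orders curvature-jet statement feeding Gromov's theorem, and in checking that $A$ meets the precise hypotheses of the integrability theorem.

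It remains to read off the stated properties from $\omega_{\hx_0}(\hat{A^*}) = A$. Since $A \in \p$, the lift $\hat{A^*}$ is vertical at $\hx_0$, so $A^*(x_0) = 0$. For the commutation, $[\hat{A^*}, \hat X]$ is again $\omega$-Killing, and the Cartan structure equation together with horizontality of the curvature (which annihilates the term involving $A \in \p$) gives $\omega_{\hx_0}([\hat{A^*},\hat X]) = -[A, \iota_{\hx_0}(X)] = 0$; as an $\omega$-Killing field vanishing at one point is identically zero, $[A^*, X] = 0$ on $U$. Finally, linearizability and the explicit matrix follow from naturality of the exponential map $\exp^\omega_{\hx_0}$ of the Cartan connection: since $A^*(x_0)=0$ one has $\hat\phi^t_{A^*}(\hx_0) = \hx_0\cdot\e^{tA}$, and in the chart $\psi$ that $\exp^\omega_{\hx_0}$ provides on $T_{x_0}M \simeq \g/\p$ (with $T_0\psi = \id$) the flow is conjugated to $\Ad(\e^{tA})|_{\g/\p} = \mathrm{e}^{t\,\ad(A)|_{\g/\p}} = \mathrm{diag}(1,\e^{-t},\ldots,\e^{-t},\e^{-2t})$; being contracting for $t \geq 0$, this keeps the flow defined on all of $U$.
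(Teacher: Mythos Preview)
Your proposal follows essentially the same route as the paper: apply Borel density to the product of $\iota$ with the curvature jet, extract from the resulting $P$-conjugacy a one-parameter group $\e^{tA}$ in $P$ that stabilizes the curvature jet and intertwines with $\Ad_{\h}(\e^{tX})$, integrate $A$ via the analytic Frobenius theorem, and read off linearizability from the exponential chart and $[A^*,X]=0$ from horizontality of the curvature. Two steps you sketch are handled more carefully in the paper: the passage from ``for each $s$ there is $p(s)$'' to an honest one-parameter subgroup is done by building an algebraic group $\bar{P^{\hx_0}} \subset \bar P$ with a surjective algebraic morphism onto $\bar S$, lifting $\Ad_{\h}(\e^{tX})$ to a one-parameter subgroup there, and then replacing it by its hyperbolic Jordan component (which stays in $\bar{P^{\hx_0}}$ by algebraicity, hence still fixes the curvature jet); and the claim that an $\R$-split $A \in \p$ can be conjugated by $P$ into $\g_0$ (indeed into the Cartan subspace $\a$) is a short explicit computation the paper carries out rather than asserts.
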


\begin{figure}
\begin{tikzpicture}[scale=.6]
\coordinate (1) at (-2.2,2) ;
\coordinate (2) at (-2.5,-2);
\coordinate (3) at (0.5,-2.8);
\coordinate (4) at (0.8,1.2) ;

\coordinate (5) at (0.8,3.2) ;
\coordinate (6) at (0.3,1.25);
\coordinate (7) at (0.45,-.7);
\coordinate (8) at (3,-1.2) ;
\coordinate (9) at (3.5,2.6) ;

\coordinate (10) at (-1.1,-.45) ;

\coordinate (11) at (-3.9,-2.4) ;

\coordinate (12) at (1.67,.75) ;

\coordinate (13) at (.62,.32) ;

\coordinate (14) at (3.2,1.27) ;

\coordinate (15) at (4.7,1.7) ;

\coordinate (16) at (2.85,1.9) ;

\draw (1)to[bend right=10](2) ;
\draw (2)to[bend right=15](3) ;
\draw (1)to[bend right=15](4) ;
\draw (3)to[bend left=12](4) ;

\draw (5)to[bend right=8](6) ;
\draw (7)to[bend right=15](8) ;
\draw (8)to[bend left=10](9) ;
\draw (5)to[bend right=10](9) ;

\draw (10) node[scale=.5]{$\bullet$} ;

\draw (16) node[scale=.5]{$\bullet$} ;

\draw[thick] (11)to[bend left=10](10) ;

\draw (12) node[scale=.5]{$\bullet$} ;

\draw[thick] (13)to[bend left=3](12) ;

\draw[thick] (14)to[bend left=5](15) ;

\node[scale=.8] at (-4.3,-2.45) {$\Delta$} ;

\node[scale=.8] at (-1,-.8) {$x_0$} ;

\node[scale=.8] at (2.1,.3) {$\Delta(x)$} ;

\node[scale=.8] at (-2.2,2.27) {$\mathcal{F}_0$};

\node[scale=.8] at (1,3.45) {$\mathcal{F}_{u_1}$};

\node[scale=.8] at (2.88,2.2) {$x$};

\coordinate (17) at (-1,1.2) ;
\coordinate (18) at (-1.3,0.2) ;

\coordinate (19) at (0.3,.4) ;
\coordinate (20) at (-.6,0.2) ;

\coordinate (21) at (.2,-1.5) ;
\coordinate (22) at (-.5,-.5) ;

\coordinate (23) at (-2.3,-1.6) ;
\coordinate (24) at (-1.28,-1) ;
\coordinate (25) at (-1.4,-1.2) ;

\coordinate (26) at (-2.3,0.6) ;
\coordinate (27) at (-1.8,-.3) ;

\coordinate (28) at (-.5,-2.3) ;
\coordinate (29) at (-.6,-1.2) ;
\coordinate (30) at (-.72,-1) ;

\coordinate (31) at (1.9,1.2) ;

\draw[->] (17)to[bend right=18](18) ;
\draw (18)to[bend right=5](10) ;
\draw[->] (19)to[bend right=18](20) ;
\draw (20)to[bend right=10](10) ;
\draw[->] (21)to[bend right=40](22) ;
\draw (22)to[bend right=10](10) ;
\draw[->] (23)to[bend right=15](25) ;
\draw (25)to[bend right=10](24) ;
\draw[->] (26)to[bend right=25](27) ;
\draw (27)to[bend right=15](10) ;
\draw[->] (28)to[bend right=15](29) ;
\draw (29)to[bend right=7](30) ;

\draw[->] (16)to[bend right=15](31) ;
\draw (31)to[bend right=7](12) ;

\draw[dashed, very thin] (-1.1,-.45)to[bend left=2](.56,.3) ;
\draw[dashed, very thin] (1.67,.75)to[bend left=2](3,1.2) ;
\end{tikzpicture}
\caption{Dynamic of the flow $\phi_{A^*}^t$}
\label{fig:dynamic_flow}
\end{figure}
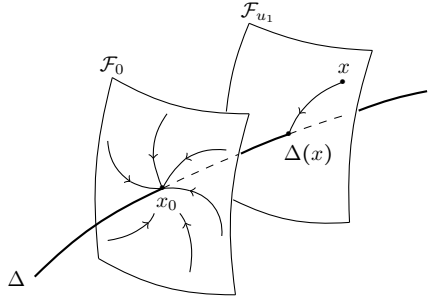

The proof of this proposition is based on the same kind of arguments than the one of Proposition \ref{prop:zimmer_conformal}. It is however longer and we will need an additional algebraic work in the Lie algebra $\so(2,n)$. We postpone it to Section \ref{s:proof_local_flow} and enter directly in the core of the proof of Theorem \ref{thm:main}.

\subsubsection{Vanishing of the Weyl tensor near $x_0$}

In fact, the open set $\mathcal{U} \subset T_{x_0}M$ in Proposition \ref{prop:local_vector_field} is a cylinder $]-\epsilon , \epsilon [ \times B$, where $B$ is some Euclidian ball in the hyperplan $\{u_1 = 0\}$. The curve $\Delta = \{\psi(u_1,0,\ldots,0), \ |u_1|<\epsilon\}$ is exactly the set of fixed points of $\{\phi_{A^*}^t\}_{t \geq 0}$ on $U$. If we note $\mathcal{F}_{u_1} = \psi(\{u_1\} \times B)$, so that $U$ is foliated by these hypersurfaces, $\phi_{A^*}^t$ preserves every $\mathcal{F}_{u_1}$ and for all $x \in U$, we have $\phi_{A^*}^t(x) \rightarrow \Delta(x)$, where $\Delta(x)$ is the point at the intersection of $\Delta$ and the leaf containing $x$. Moreover, since $A^*$ commutes with $X$ and vanishes at $x_0$, its flow must fix each point of the curve $\{\phi_X^s(x_0)\}$, showing that $\Delta$ is a piece of the orbit $\{\phi_X^s(x_0)\}$ (see Figure \ref{fig:dynamic_flow}).

\vspace*{0.2cm}

In what follows, we note $\partial_i$ the vector field $\frac{\partial}{\partial u_i}$ on $U$ given by the coordinates $(u_1,\ldots,u_n)$. Let $\mathcal{H}$ be the field of hyperplanes on $U$ given by $\mathcal{H}_x = \Span (\partial_2(x),\ldots,\partial_n(x))$. We also note $\phi^t = \phi_{A^*}^t$.

\begin{lem}
\label{lem:signature_hyperplane}
Reducing $\mathcal{U}$ and $U$ if necessary, the field $\mathcal{H}$ is degenerate, with kernel $\R.\partial_n$.
\end{lem}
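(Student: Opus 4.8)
The plan is to read off the behaviour of the coefficients of $g$ in the frame $(\partial_1,\ldots,\partial_n)$ directly from the dynamics of the conformal flow $\phi^t=\phi_{A^*}^t$. Since $A^*$ is conformal, for each $t$ there is a positive function $\varphi_t$ on $U$ with $(\phi^t)^*g=\varphi_t\,g$. In the linearizing chart of Proposition \ref{prop:local_vector_field}, $\phi^t$ is the linear diagonal map $D_t=\mathrm{diag}(1,e^{-t},\ldots,e^{-t},e^{-2t})$, so $(\phi^t)_*\partial_i=d_i(t)\,\partial_i$ with $d_1(t)=1$, $d_i(t)=e^{-t}$ for $2\leq i\leq n-1$ and $d_n(t)=e^{-2t}$. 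Writing $m_{ij}(x):=g_x(\partial_i,\partial_j)$, these continuous functions therefore satisfy, for all $x\in U$ and $t\geq 0$,
\[
d_i(t)\,d_j(t)\,m_{ij}(\phi^t(x))=\varphi_t(x)\,m_{ij}(x).
\]
The whole argument consists in playing the exponential weights $d_i(t)d_j(t)$ against the fact that the $m_{ij}$ remain bounded as $t\to+\infty$, because $\phi^t(x)\to\Delta(x)\in U$ (the cylinder $\mathcal{U}$ being preserved by $D_t$).

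The crux, and the step I expect to be the only real obstacle, is to pin down the growth of the conformal distortion $\varphi_t$. Since $g_{x_0}$ reads $2u_1u_n+u_2^2+\cdots+u_{n-1}^2$, we have $m_{1n}(x_0)=1$, so after shrinking $U$ we may assume $m_{1n}>0$ on $U$. The $(1,n)$-instance of the relation above reads $e^{-2t}m_{1n}(\phi^t(x))=\varphi_t(x)m_{1n}(x)$, whence
\[
e^{2t}\varphi_t(x)=\frac{m_{1n}(\phi^t(x))}{m_{1n}(x)}\xrightarrow[t\to+\infty]{}\frac{m_{1n}(\Delta(x))}{m_{1n}(x)}=:C(x)>0
\]
for each $x\in U$, since $\Delta(x)\in U$ and $m_{1n}$ is continuous and positive there. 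Thus $\varphi_t(x)$ decays pointwise exactly like $C(x)\,e^{-2t}$.

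It then remains to feed this estimate into the remaining entries. For $i=j=n$ one gets $m_{nn}(\phi^t(x))=e^{4t}\varphi_t(x)\,m_{nn}(x)\sim C(x)\,e^{2t}\,m_{nn}(x)$; as the left-hand side converges to $m_{nn}(\Delta(x))$ while $e^{2t}\to+\infty$, this forces $m_{nn}\equiv 0$ on $U$. Likewise, for $2\leq i\leq n-1$ one has $m_{in}(\phi^t(x))=e^{3t}\varphi_t(x)\,m_{in}(x)\sim C(x)\,e^{t}\,m_{in}(x)$, forcing $m_{in}\equiv 0$. Hence $g_x(\partial_n,\partial_j)=0$ for every $j\in\{2,\ldots,n\}$, i.e. $\partial_n$ lies in the radical of $g_x|_{\mathcal{H}_x}$. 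Finally, the matrix $(m_{ij}(x))_{2\leq i,j\leq n-1}$ equals the identity at $x_0$, so by continuity it is positive definite, in particular non-degenerate, on a small enough $U$; the radical of $g|_{\mathcal{H}}$ is therefore exactly $\R.\partial_n$, and $\mathcal{H}$ is degenerate as claimed.
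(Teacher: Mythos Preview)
Your proof is correct and follows essentially the same route as the paper: use the conformal relation $(\phi^t)^*g=\varphi_t g$ together with the diagonal weights $d_i(t)$ to force $g_x(\partial_n,\partial_j)=0$ for $2\leq j\leq n$, after first pinning down that $e^{2t}\varphi_t(x)$ has a positive limit. The only cosmetic difference is that the paper extracts this growth rate from the entry $g(\partial_2,\partial_2)$ (which is positive near $x_0$ because $\Span(\partial_2,\ldots,\partial_{n-1})$ is Euclidean there), whereas you use $g(\partial_1,\partial_n)$; both choices yield the same decay $\varphi_t\sim Ce^{-2t}$ and the rest of the argument is identical.
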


\begin{proof}
Since the field of $(n-2)$-planes $\Span(\partial_2,\ldots,\partial_{n-1})$ is Euclidian at $x_0$, shrinking $\mathcal{U}$ if necessary, it is also Euclidian on all of $U$. Let $\lambda(x,t) > 0$ be such that $[(\phi^t)^* g]_x = \lambda(x,t) g_x$ for all $x \in U$ and $t \geq 0$. Let $x^t$ denote $\phi^t(x)$ for $t \geq 0$ and $x \in U$. Since $g(\partial_2,\partial_2) > 0$ on $U$, from $\lambda(x,t) g_x(\partial_2,\partial_2) = \e^{-2t} g_{x^t}(\partial_2,\partial_2)$ we deduce that for all $x$, $\e^{2t} \lambda(x,t)$ has a positive limit when $t \rightarrow \infty$. For $2 \leq i \leq n-1$, we also have $\lambda(x,t) g_x(\partial_n,\partial_i) = \e^{-3t} g_{x^t}(\partial_n,\partial_i)$ and $\lambda(x,t) g_x(\partial_n,\partial_n) = \e^{-4t} g_{x^t}(\partial_n,\partial_n)$. Therefore, $\partial_n(x)$ is isotropic and orthogonal to $\mathcal{H}_x$.
\end{proof}

We treat the case $\dim M \geq 4$, and postpone the $3$-dimensional situation at the end of this section. Let $W$ denote the $(3,1)$-Weyl tensor of $(M,g)$ (\cite{besse}, 1.117). It is conformally invariant and it detects the conformal flatness: for all open subset $U \subset M$, $W|_U \equiv 0$ if and only if $U$ is conformally flat (\cite{besse}, 1.159, 1.165). For all $x \in U$ and subspace $E \subset T_xM$, we adopt the notations $W_x(E,E,E) = \{W_x(u,v,w), \ u,v,w \in E\}$ and $\Ima W_x = W_x(T_xM,T_xM,T_xM)$. We fix $\|.\|_x$ an auxiliary Riemannian metric on $U$ and note $x^t := \phi^t(x)$ for all $x \in U$ and $t \geq 0$.

\begin{lem}
\label{lem:contraction_rates}
Let $x \in U$, $v \in T_xM$ and $f(t)$ denote $\| (\phi^t)_* v \|_{x^t}$. If $f(t) \rightarrow 0$, then $v \in \mathcal{H}_x$, if $f(t) = o(\e^{-t})$, then $v$ is a multiple of $\partial_n(x)$ and if $f(t) = o(\e^{-2t})$, then $v=0$.
\end{lem}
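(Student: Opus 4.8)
The plan is to track how the flow $\phi^t$ contracts vectors, using the linearized picture in the coordinates $(u_1,\ldots,u_n)$ provided by Proposition~\ref{prop:local_vector_field}. Fix $x \in U$ and $v \in T_xM$, and set $f(t) = \|(\phi^t)_*v\|_{x^t}$. The key observation is that $\phi^t$ is conformal with factor $\lambda(x,t)$ (so $[(\phi^t)^*g]_x = \lambda(x,t)g_x$), and we computed in the proof of Lemma~\ref{lem:signature_hyperplane} that $\e^{2t}\lambda(x,t)$ has a positive limit as $t \to \infty$; equivalently $\lambda(x,t) \asymp \e^{-2t}$. I would first translate the Riemannian norm $\|.\|_x$ into the auxiliary comparison afforded by $g$ together with a controlling Euclidean frame, noting that on the compact closure the Riemannian norm and any fixed algebraic norm in the coordinate frame are uniformly comparable. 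The heart of the matter is then purely the behaviour of the differential $(\phi^t)_*$ in these coordinates.

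Decompose $v = v_1 \partial_1(x) + v' + v_n \partial_n(x)$ where $v' \in \Span(\partial_2,\ldots,\partial_{n-1})$. Since $\psi^{-1}\phi^t\psi$ is the diagonal linear map with entries $(1,\e^{-t},\ldots,\e^{-t},\e^{-2t})$, the pushed-forward vector $(\phi^t)_*v$ has coordinate components $(v_1, \e^{-t}v', \e^{-2t}v_n)$ in the frame $(\partial_i(x^t))$. The subtlety is that $f(t)$ measures this in the varying Riemannian norm $\|.\|_{x^t}$, not in a fixed one; but because $\phi^t(x) \to \Delta(x) \in \Delta$ as $t \to \infty$ (the flow retracts each leaf onto the fixed-point curve), the base points $x^t$ stay in a compact region, so $\|.\|_{x^t}$ stays uniformly comparable to the fixed coordinate norm. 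Hence $f(t)$ is comparable to $\sqrt{v_1^2 + \e^{-2t}\|v'\|^2 + \e^{-4t}v_n^2}$ up to bounded multiplicative constants. From this expression all three implications follow directly: if $f(t) \to 0$ then the $\partial_1$-component must vanish, i.e.\ $v_1 = 0$ and $v \in \mathcal{H}_x = \Span(\partial_2,\ldots,\partial_n)$; if $f(t) = o(\e^{-t})$ then additionally the $\Span(\partial_2,\ldots,\partial_{n-1})$-component vanishes, forcing $v' = 0$ so that $v$ is a multiple of $\partial_n(x)$; and if $f(t) = o(\e^{-2t})$ then also $v_n = 0$, so $v = 0$.

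The step I expect to require the most care is the justification that the moving Riemannian norm $\|.\|_{x^t}$ can be replaced, up to uniformly bounded factors, by a fixed frame-norm evaluated on the coordinate components. This rests on two facts: that the base points $x^t$ remain in a fixed compact set (which follows from the explicit linearization, since every orbit of the linear model converges to $\Delta$), and that the coordinate vector fields $\partial_i$ form a smooth frame of bounded distortion on the compact closure of $U$. Once this comparison is secured, the distinct exponential rates $1, \e^{-t}, \e^{-2t}$ along the three invariant directions separate the vector components cleanly, and the lemma is immediate. I would phrase the comparison as a single lemma-internal estimate $C^{-1}\,\|(\phi^t)_*v\|_{\mathrm{coord}} \leq f(t) \leq C\,\|(\phi^t)_*v\|_{\mathrm{coord}}$ with $\|(\phi^t)_*v\|_{\mathrm{coord}}^2 = v_1^2 + \e^{-2t}\|v'\|^2 + \e^{-4t}v_n^2$, and then read off the three cases.
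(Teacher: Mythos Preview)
Your proposal is correct and follows essentially the same approach as the paper: decompose $v$ in the coordinate frame $(\partial_1,\ldots,\partial_n)$, use the linearization to compute $(\phi^t)_*v$ as $\lambda_1\partial_1(x^t)+\e^{-t}(\lambda_2\partial_2(x^t)+\cdots+\lambda_{n-1}\partial_{n-1}(x^t))+\e^{-2t}\lambda_n\partial_n(x^t)$, and exploit that the orbit $\{x^t\}$ remains in a compact set on which the frame has bounded distortion. The paper is slightly terser, using a one-sided triangle-inequality estimate $|\lambda_1|\,\|\partial_1(x^t)\|_{x^t}\leq f(t)+C\e^{-t}$ rather than your two-sided norm comparison, but the content is identical; your opening remarks about the conformal factor $\lambda(x,t)$ are superfluous to the argument and can be dropped.
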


\begin{proof}
We have a framing $(\partial_1,\ldots,\partial_n)$ of $TU$. If we decompose $v = \lambda_1 \partial_1(x) + \cdots + \lambda_n \partial_n(x)$, then we have $(\phi^t)_* v = \lambda_1 \partial_1(x^t) + \e^{-t} (\lambda_2\partial_2(x^t) + \cdots + \lambda_{n-1}\partial_{n-1}(x^t)) + \e^{-2t} \lambda_n \partial_n(x^t)$. Thus, $|\lambda_1| \|\partial_1(x^t)\|_{x^t} \leq f(t) + C \e^{-t}$, $C \geq 0$, and $f(t) \rightarrow 0 \Rightarrow \lambda_1 = 0$. The other cases follow from the same reasoning.
\end{proof}

\begin{lem}
For all $x \in U$, $W_x(\mathcal{H}_x,\mathcal{H}_x,\mathcal{H}_x) = 0$ and $\Ima W_x \subset \mathcal{H}_x$.
\end{lem}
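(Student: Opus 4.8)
The plan is to track the $(3,1)$-Weyl tensor $W$ under the flow $\phi^t = \phi_{A^*}^t$, using nothing more than its conformal invariance and the contraction rates already recorded in Lemma \ref{lem:contraction_rates}. Since each $\phi^t$ is conformal and $W$ is a conformal invariant (\cite{besse}, 1.159), the tensor is \emph{equivariant} under the flow: for every $x \in U$, $t \geq 0$ and $u,v,w \in T_xM$ one has
\[
(\phi^t)_* ( W_x(u,v,w) ) = W_{x^t} ( (\phi^t)_* u, (\phi^t)_* v, (\phi^t)_* w ).
\]
Because $M$ is compact and $W$ is continuous, there is a constant $C>0$ with $\| W_y(a,b,c) \|_y \leq C \, \|a\|_y \|b\|_y \|c\|_y$ for all $y$ and all $a,b,c$. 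Applied at $y=x^t$ (which stays in a fixed compact set, see below), this shows that $\| (\phi^t)_* W_x(u,v,w) \|_{x^t}$ is bounded by the product of the three rates $\| (\phi^t)_* u \|_{x^t}$, $\| (\phi^t)_* v \|_{x^t}$, $\| (\phi^t)_* w \|_{x^t}$.

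For the first assertion I would take $u,v,w \in \mathcal{H}_x = \Span(\partial_2,\ldots,\partial_n)$. Each of these has no $\partial_1$-component, so by the computation in the proof of Lemma \ref{lem:contraction_rates} each rate is $O(\e^{-t})$, and the product is $O(\e^{-3t}) = o(\e^{-2t})$. Combined with the equivariance relation this gives $\| (\phi^t)_* W_x(u,v,w) \|_{x^t} = o(\e^{-2t})$, and the last case of Lemma \ref{lem:contraction_rates} forces $W_x(u,v,w)=0$. This is exactly $W_x(\mathcal{H}_x,\mathcal{H}_x,\mathcal{H}_x)=0$.

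For $\Ima W_x \subset \mathcal{H}_x$ the point is to first reduce to a single contracting argument. Decomposing $T_xM = \R\,\partial_1(x) \oplus \mathcal{H}_x$ and expanding $W_x(u,v,w)$ by multilinearity, the only monomial whose three arguments are all multiples of $\partial_1(x)$ is $W_x(\partial_1,\partial_1,\partial_1)$, which vanishes by antisymmetry of $W$ in its first two arguments; every other monomial has at least one argument in $\mathcal{H}_x$. So it suffices to show $W_x(u,v,w) \in \mathcal{H}_x$ when, say, $u \in \mathcal{H}_x$ and $v,w \in T_xM$ are arbitrary. In that case $\| (\phi^t)_* u \|_{x^t} = O(\e^{-t})$ while $\| (\phi^t)_* v \|_{x^t}, \| (\phi^t)_* w \|_{x^t} = O(1)$, whence $\| (\phi^t)_* W_x(u,v,w) \|_{x^t} = O(\e^{-t}) \to 0$; the first case of Lemma \ref{lem:contraction_rates} then places $W_x(u,v,w)$ in $\mathcal{H}_x$, which finishes the argument.

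I expect no serious obstacle here: the lemma is essentially a bookkeeping consequence of the asymmetric eigenvalues $1$, $\e^{-t}$ and $\e^{-2t}$ of $\phi^t$ together with the conformal invariance of $W$. The only points requiring care are organizational. One must ensure the uniform bound $C$ applies all along the forward orbit, i.e. that $\{x^t : t\geq 0\}$ remains in a fixed compact subset of $U$; this is immediate from the explicit linearization of Proposition \ref{prop:local_vector_field}, since in the coordinates $(u_1,\ldots,u_n)$ every coordinate of $x^t$ stays inside the initial box and $x^t \to \Delta(x)$. The two genuinely essential inputs are the conformal invariance of $W$ (this is precisely why the Weyl tensor, rather than the metric-dependent curvature, is the right object to flow) and the antisymmetry of $W$ in its first two slots, which is what disposes of the single non-contracting direction $\partial_1$ in the image statement.
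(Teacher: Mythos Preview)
Your proof is correct and follows essentially the same route as the paper: push the Weyl tensor forward by the conformal flow $\phi^t$, read off the decay rates from the eigenvalues $1,\e^{-t},\e^{-2t}$, use the antisymmetry $W_x(\partial_1,\partial_1,\cdot)=0$ to ensure at least one contracting argument, and invoke Lemma~\ref{lem:contraction_rates}. The only cosmetic difference is that the paper works directly with the basis vectors $\partial_i$ (getting $(\phi^t)_*W_x(\partial_i,\partial_j,\partial_k)=\e^{-kt}W_{x^t}(\partial_i,\partial_j,\partial_k)$ with $k\geq 3$ or $k\geq 1$), whereas you phrase the same estimate via a uniform multilinear bound on $W$ over the relatively compact forward orbit.
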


\begin{proof}
If $i,j,k \geq 2$, then $(\phi^t)_* W_x(\partial_i,\partial_j,\partial_k) = W_{x^t}((\phi^t)_* \partial_i,(\phi^t)_* \partial_j,(\phi^t)_* \partial_k) = \e^{-kt} W_{x^t}(\partial_i,\partial_j,\partial_k)$ for some integer $k \geq 3$, by conformal invariance of $W$. Thus, $\| (\phi^t)_* W_x(\partial_i,\partial_j,\partial_k) \|_{x^t} = O(\e^{-3t})$, and by Lemma \ref{lem:contraction_rates}, we obtain $W_x(\partial_i,\partial_j,\partial_k) = 0$, proving the first point.

If $u,v,w \in T_xM$, then $W_x(u,v,w)$ is a linear combination of the $W_x(\partial_i,\partial_j,\partial_k)$'s, with $1 \leq i,j,k \leq n$ and at least one index greater than $1$ since $W_x(\partial_1,\partial_1,\partial_1) = 0$. This proves that $\|(\phi^t)_* W_x(u,v,w) \|_{x^t} = O(\e^{-t})$, and applying Lemma \ref{lem:contraction_rates}, we obtain $W_x(u,v,w) \in \mathcal{H}_x$.
\end{proof}

\begin{lem}
\label{lem:two_hyperplanes}
Let $x \in U$ and $\mathcal{H}_x'$ be a degenerate hyperplane of $T_xM$. Assume $W_x(\mathcal{H}_x',\mathcal{H}_x',\mathcal{H}_x') = 0$ and $\Ima W_x \subset \mathcal{H}_x'$. Then, $\mathcal{H}_x \neq \mathcal{H}_x' \Rightarrow W_x = 0$.
\end{lem}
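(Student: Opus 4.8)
The plan is to reduce the statement to pure linear algebra in the Lorentzian vector space $(T_xM, g_x)$, exploiting the symmetries of the Weyl tensor together with the one feature special to Lorentzian signature: there is no totally isotropic $2$-plane. Throughout I identify the $(3,1)$-tensor $W_x$ with the $(4,0)$-tensor $W_x(u,v,w,z) := g_x(W_x(u,v,w),z)$, which is antisymmetric in its first pair and in its last pair and symmetric under the exchange of these two pairs.

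First I would analyse the two radicals. A degenerate hyperplane is the orthogonal of its own null radical, so $\mathcal{H}_x = \ell^\perp$ with $\ell = \mathcal{H}_x^\perp \subset \mathcal{H}_x$ a null line, and similarly $\mathcal{H}_x' = (\ell')^\perp$. Since $\mathcal{H}_x \neq \mathcal{H}_x'$ we get $\ell \neq \ell'$. The crucial point is that $\ell$ and $\ell'$ cannot be orthogonal: two distinct orthogonal null lines would span a totally isotropic $2$-plane, which is impossible in signature $(1,n-1)$. Hence $P := \ell \oplus \ell'$ is a non-degenerate (Lorentzian) plane, its orthogonal $P^\perp$ is Euclidean of dimension $n-2$, and one checks at once that $\mathcal{H}_x \cap \mathcal{H}_x' = (\ell \oplus \ell')^\perp = P^\perp$, with $P^\perp \subset \mathcal{H}_x$ and $P^\perp \subset \mathcal{H}_x'$.

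Next I would translate the image hypotheses. The conditions $\Ima W_x \subset \mathcal{H}_x$ and $\Ima W_x \subset \mathcal{H}_x'$ give $\Ima W_x \subset \mathcal{H}_x \cap \mathcal{H}_x' = P^\perp$, that is $W_x(a,b,c,d) = 0$ whenever $d \in (P^\perp)^\perp = P$. By the antisymmetry in the last pair and the pair-exchange symmetry, this propagates to \emph{all four slots}: $W_x$ vanishes as soon as any one of its arguments lies in $P$. Decomposing each argument along the splitting $T_xM = P \oplus P^\perp$ and using multilinearity, every term carrying a $P$-component dies, so $W_x(a,b,c,d) = W_x(\pi a, \pi b, \pi c, \pi d)$, where $\pi$ is the orthogonal projection onto $P^\perp$. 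Thus $W_x$ is entirely determined by its restriction to $P^\perp$.

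Finally I would invoke $W_x(\mathcal{H}_x,\mathcal{H}_x,\mathcal{H}_x)=0$: since $P^\perp \subset \mathcal{H}_x$, this forces $W_x(u,v,w)=0$ for all $u,v,w \in P^\perp$, hence $W_x$ vanishes identically on $P^\perp$ in all four slots. Combined with the preceding paragraph this yields $W_x = 0$. (By symmetry the hypothesis $W_x(\mathcal{H}_x',\mathcal{H}_x',\mathcal{H}_x')=0$ would serve equally well, since $P^\perp \subset \mathcal{H}_x'$ too, so not all four hypotheses are strictly needed.) The only genuinely non-formal step — and the single place where signature intervenes — is ruling out $\ell \perp \ell'$; everything else is a bookkeeping consequence of the tensorial symmetries. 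It is precisely here that the Lorentzian assumption is indispensable, in accordance with the fact that the analogous statement is expected to fail once $\min(p,q) \geq 2$.
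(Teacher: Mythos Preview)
Your proof is correct and follows essentially the same route as the paper: both set up the Lorentzian plane $P=\ell\oplus\ell'$ with Riemannian orthogonal $F=P^{\perp}=\mathcal{H}_x\cap\mathcal{H}_x'$, use $\Ima W_x\subset F$, and then invoke the curvature-type symmetries of the $(4,0)$-Weyl tensor. Your organization is slightly cleaner than the paper's---you first use the image condition plus pair-exchange to kill any slot lying in $P$ and only then apply the vanishing on $\mathcal{H}_x$ to the remaining $P^{\perp}$-block, whereas the paper intertwines the two by checking five specific entries $W_x(e_1,e_n,e_1),\ldots$ one at a time---but the substance is the same.
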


\begin{proof}
Let $e_1,e_n \in T_xM$ be isotropic and such that $\mathcal{H}_x = e_1^{\perp}$ and $\mathcal{H}_x' = e_n^{\perp}$. If $e_1$ and $e_n$ are non-proportional, then the plane $\Span(e_1,e_n)$ is Lorentzian and $F:= \Span(e_1,e_n)^{\perp}$ is Riemannian. Thus, $\mathcal{H}_x = \R.e_1 \oplus F$ and $\mathcal{H}_x' = F \oplus \R.e_n$. Since $W_x$ ``vanishes'' in restriction to these hyperplanes, by multi-linearity and since $W_x$ is skew-symmetric with respect to its first two entries, we are left to prove that for all $u \in F$ 
\begin{equation*}
W_x(e_1,e_n,e_1) = W_x(e_n,e_1,e_n) = W_x(e_1,u,e_n) = W_x(e_n,u,e_1) = W_x(e_1,e_n,u) = 0.
\end{equation*}
By hypothesis, $\Ima W_x \subset \mathcal{H}_x \cap \mathcal{H}_x' = F$. By construction of the Weyl tensor, the $(4,0)$-tensor $g_x(W(.,.,.),.)$ is a component of the $(4,0)$-Riemann curvature tensor of $g$ with respect to a standard decomposition of the subspace of $S^2(\Lambda^2 T_x^*M)$ consisting of tensors satisfying the (first) Bianchi identity (see \cite{besse}, 1.117). In particular, it satisfies the same symmetries than a $(4,0)$-Riemann curvature tensor. This is enough to conclude: for instance, for all $v \in F$ we have $g_x(W_x(e_1,e_n,e_1),v) = g_x(W_x(e_1,v,e_1),e_n) = 0$ since $e_1, v \in \mathcal{H}_x$. Therefore, $W_x(e_1,e_n,e_1) \in F \cap F^{\perp} = 0$. Similarly, the four others are also orthogonal to $F$.
\end{proof}

\begin{cor}
The Weyl tensor vanishes in restriction to $\Delta$.
\end{cor}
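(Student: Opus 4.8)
The plan is to apply the two-hyperplane Lemma \ref{lem:two_hyperplanes} at the point $x_0\in\Delta$. By the two lemmas preceding it we already control the Weyl tensor through one degenerate hyperplane: for every $x\in U$ we have $W_x(\mathcal{H}_x,\mathcal{H}_x,\mathcal{H}_x)=0$ and $\Ima W_x\subset\mathcal{H}_x$, where $\mathcal{H}_x=\partial_n^\perp$ has kernel $\R\,\partial_n$. To conclude $W_{x_0}=0$ it therefore suffices to produce a \emph{second} degenerate hyperplane $\mathcal{H}_{x_0}'\neq\mathcal{H}_{x_0}$ satisfying $W_{x_0}(\mathcal{H}_{x_0}',\mathcal{H}_{x_0}',\mathcal{H}_{x_0}')=0$ and $\Ima W_{x_0}\subset\mathcal{H}_{x_0}'$. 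The natural candidate is $\mathcal{H}_{x_0}':=X_{x_0}^{\perp}$. Indeed $\Delta$ is a piece of the integral curve of $X$, along which $X$ is light-like, so $X_{x_0}=\partial_1(x_0)$ is isotropic and $X_{x_0}^\perp=\Span(\partial_1,\dots,\partial_{n-1})$ is a degenerate hyperplane with kernel $\R\,\partial_1$, transverse to $\mathcal{H}_{x_0}=\Span(\partial_2,\dots,\partial_n)$.

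First I would verify the two vanishing properties for $X^\perp$ by running the contraction argument of Lemma \ref{lem:contraction_rates} (and of the lemma that follows it) with a conformal flow whose contraction is \emph{transverse} to that of $\phi^t$: one fixing $x_0$ and acting at $x_0$ by $\mathrm{diag}(\e^{-2t},\e^{-t},\dots,\e^{-t},1)$ in the frame $(\partial_1,\dots,\partial_n)$, so that $\partial_1$ is now the most contracted direction and $X^\perp=\partial_1^\perp$ is its stable hyperplane. For such a flow, the conformal invariance of $W$ together with the contraction-rate estimates gives exactly $W_{x_0}(X^\perp,X^\perp,X^\perp)=0$ and $\Ima W_{x_0}\subset X^\perp$, whereupon Lemma \ref{lem:two_hyperplanes}, applied to the transverse pair $\mathcal{H}_{x_0}\neq X^\perp$, yields $W_{x_0}=0$. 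Since $\phi_X^s$ acts transitively on the piece $\Delta=\{\phi_X^s(x_0)\}$ and preserves the $(3,1)$--Weyl tensor, establishing the vanishing at the single point $x_0$ propagates it to all of $\Delta$.

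The hard part is producing this transverse contraction. It cannot be $\phi^t$ run backwards (for $t\to-\infty$ the flow \emph{expands} every direction but $\partial_1$, so it singles out no stable hyperplane), and it cannot come from a flow commuting with $X$ (such a flow fixes $X_{x_0}=\partial_1$, hence reproduces $\mathcal{H}$ rather than $X^\perp$); re-running the construction of Proposition \ref{prop:local_vector_field} with the opposite affine subgroup $\Span(X,Z)$ leads to the same kernel $\R X$ for $q$, so it again contracts the dual null direction $\partial_n$ and not $\partial_1$. Concretely, with $\phi^t$ alone the invariance under $\mathrm{diag}(1,\e^{-t},\dots,\e^{-t},\e^{-2t})$ only forces $W_{x_0}$ down to the symmetric trace-free ``screen'' tensor $u,v\in\Span(\partial_2,\dots,\partial_{n-1})\mapsto g(W_{x_0}(\partial_1,u)\partial_1,v)$, which is of weight $0$ both for $d\phi^t$ and for the transport by $\phi_X^s$ and is therefore invisible to these two flows; it is precisely this residual pp-wave-type curvature that the transverse dynamics must annihilate. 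Thus the genuine new input — the existence of a local conformal flow contracting the null tangent to $\Delta$, furnished, as for $A^*$, by the integrability available in the real-analytic setting — is the main obstacle, and the $3$-dimensional case (where the Weyl tensor is replaced by the Cotton tensor) must be carried along in parallel.
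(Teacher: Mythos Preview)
You have correctly identified Lemma~\ref{lem:two_hyperplanes} as the key tool, and you are right that a second degenerate hyperplane $\mathcal{H}_{x_0}'\neq\mathcal{H}_{x_0}$ with the same two Weyl properties would finish. But, as you yourself diagnose, your scheme does not actually produce one: the ``transverse'' contracting flow $\mathrm{diag}(\e^{-2t},\e^{-t},\ldots,\e^{-t},1)$ is never constructed, and your analysis shows why none of the natural candidates (reversing $\phi^t$, commuting with $X$, swapping $Y$ for $Z$ in Proposition~\ref{prop:local_vector_field}) can work. So the proposal has a genuine gap at the decisive step.

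The paper bypasses this obstacle with a much cheaper observation: one does not need a new \emph{flow} to manufacture the second hyperplane, only a single conformal linear map of $(T_{x_0}M,g_{x_0})$ that moves $\mathcal{H}_{x_0}$. Indeed, the two conditions ``$W_{x_0}$ vanishes on triples from the hyperplane'' and ``$\Ima W_{x_0}$ lies in the hyperplane'' are conformally invariant, so for \emph{any} $f\in\Conf(T_{x_0}M,g_{x_0})$ the image $f(\mathcal{H}_{x_0})$ automatically satisfies them. Lemma~\ref{lem:two_hyperplanes} then says, contrapositively: if $W_{x_0}\neq 0$, every conformal linear map must fix $\mathcal{H}_{x_0}$. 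Now recall that $\h_{x_0}=\R Y$, so $T_{x_0}\phi_Y^t$ \emph{is} such a conformal linear map --- no Frobenius-type construction needed, it comes straight from the $H$-action. It also preserves the orbit tangent plane $T_{x_0}(H.x_0)$, which contains $X_{x_0}=\partial_1(x_0)\notin\mathcal{H}_{x_0}$; hence $T_{x_0}(H.x_0)\cap\mathcal{H}_{x_0}$ would be a $T_{x_0}\phi_Y^t$-invariant \emph{Riemannian} line. But the action of $T_{x_0}\phi_Y^t$ on $T_{x_0}(H.x_0)$ is conjugate to $\bar\Ad(\e^{tY})$ on $\h/\h_{x_0}$, whose only invariant line is the isotropic one $\R\bar X$. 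This contradiction gives $W_{x_0}=0$, and $\phi_X^s$ transports it along $\Delta$ as you said.
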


\begin{proof}
It is enough to prove that $W_{x_0} = 0$ since $\Delta$ is a piece of the orbit $\{\phi_X^t(x_0)\}$. Remark that the properties of $\mathcal{H}_x$ we have exhibited are conformal: if $f \in \Conf(T_xM,g_x)$, then $\mathcal{H}_x':=f(\mathcal{H}_x)$ is degenerate, and satisfies the hypothesis of the previous lemma by conformal invariance of $W$.

Assume that $W_{x_0} \neq 0$. Then, $\mathcal{H}_{x_0}$ is preserved by $\Conf(T_{x_0}M,g_{x_0})$ by Lemma \ref{lem:two_hyperplanes}. Now, consider the flow of the conformal vector field $Y$. Since $\h_{x_0} = \R.Y$, we have $T_{x_0} \phi_Y^t \in \Conf(T_{x_0}M,g_{x_0})$, and $T_{x_0} \phi_Y^t$ must preserve $\mathcal{H}_{x_0}$. But it also preserves the tangent space of the orbit $T_{x_0}(H.x_0)$. Since $\partial_1(x_0) = X_{x_0}$, we have $T_{x_0}(H.x_0) \nsubset \mathcal{H}_{x_0}$. Consequently, $T_{x_0}(H.x_0) \cap \mathcal{H}_{x_0}$ is a Riemannian line in $T_{x_0}(H.x_0)$ which is preserved by $T_{x_0} \phi_Y^t$. Here is the contradiction: the action of $T_{x_0} \phi_Y^t$ on $T_{x_0}(H.x_0)$ is conjugated to the action $\bar{\Ad}(\e^{tY})$ on $\h / \h_{x_0}$ and the only line in $\h / \h_{x_0}$ which is preserved by $\bar{\Ad}(\e^{tY})$ is $\R.X$ modulo $\h_{x_0}$, and it is isotropic (cf. beginning of Section \ref{s:conformal_flatness}).
\end{proof}

\begin{cor}
The Weyl tensor vanishes in restriction to $U$.
\end{cor}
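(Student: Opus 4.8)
The plan is to propagate the vanishing of $W$ from $\Delta$ to the whole of $U$ by means of the attracting dynamics of $\phi^t = \phi_{A^*}^t$, which is exactly what the preceding lemmas were tailored for (we remain in the case $\dim M \geq 4$). Fix $x \in U$ and recall from Proposition \ref{prop:local_vector_field} (as used in the proof of Lemma \ref{lem:contraction_rates}) that in the chart $\psi$ one has $(\phi^t)_* \partial_i|_x = \e^{-t\alpha_i}\, \partial_i|_{x^t}$, with weights $\alpha_1 = 0$, $\alpha_i = 1$ for $2 \leq i \leq n-1$ and $\alpha_n = 2$. The conformal invariance of the $(3,1)$ Weyl tensor together with the fact that $\phi^t$ is conformal then gives, for all indices $i,j,k$,
\begin{equation*}
(\phi^t)_* W_x(\partial_i,\partial_j,\partial_k) = \e^{-t(\alpha_i+\alpha_j+\alpha_k)}\, W_{x^t}(\partial_i,\partial_j,\partial_k).
\end{equation*}
Since $x^t \to \Delta(x) \in \Delta$ and $W$ vanishes on $\Delta$ by the preceding corollary, the norm $\|W_{x^t}(\partial_i,\partial_j,\partial_k)\|_{x^t}$ tends to $0$, so $\|(\phi^t)_* W_x(\partial_i,\partial_j,\partial_k)\|_{x^t} = o(\e^{-ts})$ where $s := \alpha_i+\alpha_j+\alpha_k$.

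Next I would read off the consequences through Lemma \ref{lem:contraction_rates}, applied to the vector $W_x(\partial_i,\partial_j,\partial_k)$. If $s \geq 2$ the estimate reads $o(\e^{-2t})$ and forces $W_x(\partial_i,\partial_j,\partial_k) = 0$; if $s = 1$ it reads $o(\e^{-t})$ and gives $W_x(\partial_i,\partial_j,\partial_k) \in \R.\partial_n(x)$; and the only triple with $s = 0$ is $(1,1,1)$, for which $W_x(\partial_1,\partial_1,\partial_1) = 0$ by skew-symmetry in the first two entries. The triples with $s = 1$ are precisely those carrying a single index $p \in \{2,\ldots,n-1\}$ and two indices equal to $1$; skew-symmetry in the first two entries gives $W_x(\partial_1,\partial_1,\partial_p) = 0$ and $W_x(\partial_p,\partial_1,\partial_1) = -W_x(\partial_1,\partial_p,\partial_1)$, so the only components not yet shown to vanish are $W_x(\partial_1,\partial_p,\partial_1) \in \R.\partial_n(x)$, $2 \leq p \leq n-1$.

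It remains to eliminate these surviving components, which is the single point where the contraction estimate is insufficient and the curvature symmetries must be invoked. By Lemma \ref{lem:signature_hyperplane}, $\partial_n(x)$ is isotropic and orthogonal to $\mathcal{H}_x = \Span(\partial_2,\ldots,\partial_n)$; thus $g_x(\partial_n,\partial_q) = 0$ for every $q \geq 2$, while $g_x(\partial_n,\partial_1) \neq 0$ because $\partial_1 \notin \partial_n^{\perp} = \mathcal{H}_x$. Writing $W_x(\partial_1,\partial_p,\partial_1) = c_p\, \partial_n(x)$ and passing to the $(4,0)$ tensor $g_x(W_x(\cdot,\cdot,\cdot),\cdot)$, which shares the symmetries of a Riemann curvature tensor, I would pair with $\partial_1$: the scalar $g_x(W_x(\partial_1,\partial_p,\partial_1),\partial_1)$ vanishes by skew-symmetry in the last two slots, whence $c_p\, g_x(\partial_n,\partial_1) = 0$ and therefore $c_p = 0$. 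Hence $W_x(\partial_1,\partial_p,\partial_1) = 0$, every component of $W_x$ vanishes, and $W_x = 0$ for all $x \in U$. The main obstacle is thus the persistence of the weight-one components $W_x(\partial_1,\partial_p,\partial_1)$, which the attracting dynamics alone leaves undetermined and which are removed only by exploiting that their values lie along the null direction $\partial_n$ orthogonal to $\mathcal{H}_x$.
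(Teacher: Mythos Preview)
Your proof is correct and follows essentially the same approach as the paper: propagate the vanishing of $W$ on $\Delta$ via the contraction rates of $\phi^t$, use Lemma \ref{lem:contraction_rates} to kill all components with total weight $\geq 2$, and then eliminate the remaining weight-one components $W_x(\partial_1,\partial_p,\partial_1)\in\R.\partial_n(x)$ using the curvature symmetries of the $(4,0)$-Weyl tensor. The only cosmetic difference is that the paper invokes the pair symmetry $g_x(W_x(\partial_1,\partial_p,\partial_1),\partial_1)=g_x(W_x(\partial_1,\partial_1,\partial_1),\partial_p)$ rather than skew-symmetry in the last two slots, and then phrases the conclusion as ``orthogonal to all of $T_xM$''; the content is the same.
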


By analyticity, $W$ must then vanish on all of $M$ by connectedness and this corollary finishes the proof of Theorem \ref{thm:main} in the case $\dim M \geq 4$.

\begin{proof}
This lemma is also based on the dynamical properties of the flow $\phi^t$. For all $1 \leq i,j,k \leq n$, we now have $\|W_{x^t}(\partial_i,\partial_j,\partial_k)\|_{x^t} \rightarrow 0$, since $W|_{\Delta} \equiv 0$. Moreover, $\|(\phi^t)_*W_x(\partial_i,\partial_j,\partial_k)\|_{x^t} = \e^{-kt} \|W_{x^t}(\partial_i,\partial_j,\partial_k)\|_{x^t}$ with $k \geq 2$ as soon as two indexes among $i,j,k$ are greater than $1$. For such $i,j,k$'s, we have $\|(\phi^t)_*W_x(\partial_i,\partial_j,\partial_k)\|_{x^t} = o(\e^{-2t})$ and Lemma \ref{lem:contraction_rates} gives $W_x(\partial_i,\partial_j,\partial_k) = 0$.

Since $W_x$ is skew-symmetric with respect to its first two entries, we are left to consider the $W_x(\partial_1,\partial_i,\partial_1)$'s, for $2 \leq i \leq n$. We also have $\|(\phi^t)_*W_x(\partial_1,\partial_n,\partial_1)\|_{x^t} = o(\e^{-2t})$, since $(\phi^t)_* \partial_n(x) = \e^{-2t}\partial_n(x^t)$. Thus, $W_x(\partial_1,\partial_n,\partial_1) = 0$. 

For $2 \leq i \leq n-1$, we obtain $\|(\phi^t)_*W_x(\partial_1,\partial_i,\partial_1)\|_{x^t} = o(\e^{-t})$ and Lemma \ref{lem:contraction_rates} gives that $W_x(\partial_1,\partial_i,\partial_1)$ is proportional to $\partial_n(x)$. We conclude by using the symmetries of $W$: if $g$ is a metric in the conformal class, we have
\begin{equation*}
g_x(W_x(\partial_1,\partial_i,\partial_1),\partial_1) = g_x(W_x(\partial_1,\partial_1,\partial_1),\partial_i) = 0.
\end{equation*}
Thus, $W_x(\partial_1,\partial_i,\partial_1) \perp \partial_1(x)$. But $W_x(\partial_1,\partial_i,\partial_1)$ is a multiple of $\partial_n(x)$, so it is also orthogonal to $\mathcal{H}_x$. Finally, it is orthogonal to $T_xM = \R.\partial_1(x) \oplus \mathcal{H}_x$ and must be zero.
\end{proof}

\subsubsection{Vanishing of the Cotton tensor in dimension $3$}

When $M$ is $3$-dimensional, the Weyl tensor always vanishes and one has to consider the $(3,0)$-Cotton tensor to detect the conformal flatness of $(M,g)$. It is defined as follows: if $P =\text{Ric} - \frac{1}{4}Sg$ denotes the Schouten tensor of $(M,g)$, then define $C_x(u,v,w) = (\nabla_u P)_x(v,w) - (\nabla_v P)_x(u,w)$ for all $x \in M$ and $u,v,w \in T_xM$. Then, $C$ is conformally invariant, \textit{i.e.} two conformal metrics $g$ and $g'$ give rise to the same Cotton tensor (\cite{juhl}, p.393), and for all $U \subset M$, $C|_U \equiv 0$ if and only if $U$ is conformally flat (\cite{lafontaine}, C.6).

We take back the notations of the previous section. For $t \geq 0$, by conformal invariance of $C$,
\begin{equation*}
C_{\phi^t(x)}((\phi^t)_*\partial_i,(\phi^t)_* \partial_j,(\phi^t)_* \partial_k) = C_x(\partial_i,\partial_j,\partial_k),
\end{equation*}
for all $i,j,k \in \{1,2,3\}$ and $x \in U$. Since $C$ is skew-symmetric in its two first entries, we only consider $i \neq j$. Therefore, we have $k \geq 1$ such that $C_x(\partial_i,\partial_j,\partial_k) = \e^{-kt} C_{\phi^t(x)}(\partial_i,\partial_j,\partial_k)$. And because $C_{\phi^t(x)}(\partial_i,\partial_j,\partial_k)$ is bounded near $\Delta(x) = \lim \phi^t(x)$, we have $C_x(\partial_i,\partial_j,\partial_k) = 0$, proving that $C$ vanishes on a neighbourhood of $x_0$, and by analyticity of $(M,g)$ on all of $M$.

\subsection{Orbits of dimension $1$}
\label{ss:orbit_dim1}

We prove here that if there exists a $1$-dimensional $H$-orbit, then $(M,g)$ is conformally flat. We reuse the analyticity assumption, but let us mention that with a more elaborated work, it is possible to prove that in smooth regularity, an open set containing the orbit in its closure is conformally flat. Recall the

\begin{thm}[\cite{frances_melnick13}, Th. 1.2]
\label{thm:nonlinear_conformal_flow}
Let $(M,g)$ be an analytic Lorentz manifold of dimension greater than or equal to $3$. Let $X$ be an analytic conformal vector field on $M$, admitting a singularity at $x_0 \in M$. If $X$ is not analytically linearizable near $x_0$, then $(M,g)$ is conformally flat.
\end{thm}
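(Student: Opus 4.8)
The plan is to work in the normalized conformal Cartan geometry $(\hat M \to M,\omega)$ modeled on $\Ein^{1,n-1} = G/P$, $G = \PO(2,n)$, introduced in Section \ref{ss:cartan_geometry}, and to read conformal flatness off the curvature $\kappa = \d\omega + \frac12[\omega,\omega]$: one has that $(M,g)$ is conformally flat if and only if $\kappa \equiv 0$. The field $X$ lifts canonically to $\hat X$ on $\hat M$ with $\L_{\hat X}\omega = 0$, so its flow $\phi^t := \phi_{\hat X}^t$ preserves $\omega$, hence $\kappa$ and all its covariant derivatives. Since $X_{x_0}=0$, the lift $\hat X$ is tangent to the fiber $\pi^{-1}(x_0)$; fixing $\hx_0$ over $x_0$ and setting $X^{\mathrm h} := \omega_{\hx_0}(\hat X_{\hx_0}) \in \p$, the flow restricts to the fiber as the right translation $\phi^t(\hx_0) = \hx_0 \cdot \exp(tX^{\mathrm h})$. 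This one-parameter group $\exp(tX^{\mathrm h})$ is the \emph{holonomy} of $X$ at its singularity, and everything is governed by it.

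Using the $|1|$-grading $\g = \g_{-1}\oplus\g_0\oplus\g_1$ with $\p = \g_0\oplus\g_1$, $\g_0 = \co(1,n-1)$ and $\g_1 \cong \R^{n*}$ the special-conformal part, I would first establish the linearizability criterion: \emph{the flow is analytically linearizable near $x_0$ if and only if $X^{\mathrm h}$ is $P$-conjugate into the Levi part $\g_0$}. The image of $X^{\mathrm h}$ in $\g_0$ is precisely the linear isotropy of the flow at $x_0$, and when $X^{\mathrm h}\in\g_0$ the exponential chart of $\omega$ linearizes $\phi_X^t$ directly. The content of the hypothesis is therefore that, after conjugation, $X^{\mathrm h}$ retains a genuine nonzero $\g_1$-component that no conjugation can remove.

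Next I would exploit the invariance $(\phi^t)^*\kappa = \kappa$ at $\hx_0$. Using $\omega$ to trivialize, view $\kappa_{\hx_0}$ as an element $\kappa_0 \in \Hom(\Lambda^2(\g/\p),\g)$; combining $\phi^t(\hx_0)=\hx_0\exp(tX^{\mathrm h})$ with the $\Ad$-equivariance of $\kappa$ yields
\begin{equation*}
\Ad(\exp(tX^{\mathrm h}))\cdot\kappa_0 = \kappa_0 \qquad \text{for all } t,
\end{equation*}
so $\kappa_0$ is fixed by the induced holonomy action, and by normality of $\omega$ ($\partial^*\kappa=0$) the whole curvature is encoded in its harmonic part (the Weyl tensor for $n\geq 4$, the Cotton tensor for $n=3$), exactly as used in Section \ref{ss:orbit_dim2}. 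The main algebraic step is then to show that an element with nonzero $\g_1$-part has \emph{no} nonzero fixed vector in the harmonic curvature module: writing $X^{\mathrm h} = X_0 + X_1$ with $X_1\in\g_1\setminus\{0\}$, the nilpotent operator $\ad(X_1)$ raises weights, and its kernel, cut down by the weight constraints from $\ad(X_0)$ and by normality, reduces to $0$. This forces $\kappa_{\hx_0}=0$.

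Finally, vanishing of $\kappa$ at the single point $\hx_0$ must be propagated. Here the dynamics re-enter: the nonzero $\g_1$-component makes $\exp(tX^{\mathrm h})$ act with nontrivial contraction near $x_0$, so a whole set of points satisfies $\phi^t(x)\to x_0$ as $t\to+\infty$; the $\phi^t$-invariance of $\kappa$ together with contraction-rate estimates in the framing, in the spirit of Lemma \ref{lem:contraction_rates}, forces $\kappa$ to vanish on that set, and then connectedness and analyticity of $(M,g)$ spread $\kappa\equiv0$ to all of $M$. I expect the two hardest points to be the algebraic computation of the holonomy-invariants on the harmonic module and the clean characterization of non-linearizability purely through the $\g_1$-component of $X^{\mathrm h}$, since one must carefully separate the semisimple and nilpotent parts of $X^{\mathrm h}$ and rule out conjugations that would spuriously remove the translation part.
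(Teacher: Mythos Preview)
The paper does not prove this theorem: it is quoted verbatim from \cite{frances_melnick13} (Theorem 1.2 there) and immediately applied in Section \ref{ss:orbit_dim1} as a black box, with no argument given. So there is no ``paper's own proof'' to compare your attempt against.

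That said, your sketch is in the right spirit for how Frances--Melnick actually argue (holonomy in $\p$, case analysis on its Jordan type, curvature invariance under the holonomy action, propagation by dynamics and analyticity), but two points are oversimplified. First, the clean dichotomy ``linearizable $\Leftrightarrow$ $X^{\mathrm h}$ is $P$-conjugate into $\g_0$'' is not established so easily; one direction is the exponential-chart argument you cite, but the converse requires a genuine case-by-case analysis of elements of $\p$ up to $P$-conjugacy and of which holonomies produce a linearizable local model. Second, the step ``an element with nonzero $\g_1$-part has no nonzero fixed vector in the harmonic curvature module'' is not true without further hypotheses: for several non-linear holonomies the curvature does \emph{not} vanish at $x_0$ from invariance alone, and one must instead use the attractor dynamics of the flow near $x_0$ (stable sets, contraction rates on a development into the model) to force vanishing on an open set first. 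In short, your outline captures the architecture, but the two ``hardest points'' you flag are indeed where the real work in \cite{frances_melnick13} lies, and both require more than what you wrote.
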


Recall that we note $\h_x$ the Lie algebra of the stabilizer of a point $x$. Let $x_0 \in M$ be such that $\h_{x_0}$ has codimension $1$ in $\h$. Let $(X,Y,Z)$ be an $\sl(2)$-triple of $\h$. Since any $2$-dimensional Lie subalgebra of $\sl(2,\R)$ is conjugated to the affine Lie algebra, replacing $x_0$ by some $h.x_0$ if necessary, we must have $\h_{x_0} = \Span(X,Y)$. We claim that $Y$ is not linearizable near $x_0$, what will be sufficient with Theorem \ref{thm:nonlinear_conformal_flow}.

Since $Z$ is non-singular at $x_0$, we can choose coordinates $(u_1,\ldots,u_n)$ near $x_0$ so that $Z = \frac{\partial}{\partial u_1}$ on this neighbourhood. Thus, for small $t$, we have $\phi_Z^t(x_0) = (t,0,\ldots,0) =: \alpha(t)$. Since the orbit has dimension $1$, there is a neighbourhood of the identity $V \subset H$ such that $V.x_0 \subset \{\alpha(t), \ t \text{ small} \}$, and $X$ and $Y$ must be tangent to the curve $\alpha$. In restriction to this curve, write $X = f(t) \frac{\partial}{\partial u_1}$ and $Y = g(t) \frac{\partial}{\partial u_1}$. From the relation $[Z,X] = -X$, we get $f(t) = -t$ and from the relation $[Z,Y] = -X$ we get $g(t) = -t^2 / 2$. It is now elementary to compute the action of the flow of $Y$ on the curve $\alpha$:
\begin{equation*}
\phi_Y^t(\alpha(u_1)) = \alpha \left ( \frac{u_1}{1-\frac{t}{2}u_1} \right ),
\end{equation*}
whenever $\alpha	(u_1)$ is in the domain of the chart and the flow is defined at time $t$. Thus, $\phi_Y^t$ has a parabolic action on the curve $\alpha$. In \cite{frances_localdynamics}, Lemma 4.6, Frances had already proved that such a flow cannot be linearizable near its singularity.

\section{Existence of a local conformal vector field: Proof of Proposition \ref{prop:local_vector_field}}
\label{s:proof_local_flow}

Let $\pi : \hat{M} \rightarrow M$ and $\omega$ be the Cartan bundle and the Cartan connection defined by $(M,[g])$. We still note $G = \PO(2,n)$, and $P < G$ the stabilizer of a point in $\Ein^{1,n-1}$, \textit{i.e.} the stabilizer of an isotropic line in $\R^{2,n}$.

\subsection{Integrability of Killing generators}

The key ingredient in the proof of Proposition \ref{prop:local_vector_field} is a formulation for Cartan geometries of a ``Frobenius theorem'', as Gromov named it in \cite{gromov}. More recently, Melnick proposed a formulation and a proof of it for real-analytic Cartan geometries in \cite{melnick}, and we gave another approach in \cite{article1} for smooth Cartan geometries. Roughly speaking, the idea is to give necessary and sufficient conditions on a tangent vector so that it can be locally extended into a conformal vector field.

Generalizing the tensor curvature and the covariant derivative of affine geometry, there is a notion of curvature and of covariant derivative for Cartan geometries. In order to avoid non-necessary technicality, we do not give precise definitions and refer the reader to \cite{melnick}, §3 and \cite{article1}, §2.1, §2.3.1. Let $r$ be a positive integer. Using the curvature and its $r$ first covariant derivatives, we build a $P$-equivariant map
\begin{equation*}
\mathcal{D}^r K : \hat{M} \rightarrow \mathcal{W}^r,
\end{equation*}
where $\mathcal{W}^r$ is a finite-dimensional vector space, endowed with a linear right-action of $\Ad_{\g}(P)$. This map is such that if $f : U \rightarrow V$ is a local conformal diffeomorphism, then $\mathcal{D}^r K \circ \hat{f} = \mathcal{D}^r K$ on $\pi^{-1}(U)$. Thus, if $\hat{X}$ is the lift of a local conformal Killing vector field defined near $x$, we have $(\hat{X}. \mathcal{D}^r K)(\hx) = 0$ for all $\hx$ lying over $x$. For compact real-analytic Cartan geometries, if $r$ is large enough, the ``Frobenius theorem'' says that the converse property holds everywhere. Let us state it precisely. If $A \in \g$, let $\tilde{A}$ denote the $\omega$-constant vector field $\omega^{-1}(A) \in \mathfrak{X}(\hat{M})$.

\begin{dfn}
Let $\hx \in \hat{M}$ and $r > 0$. A Killing generator of order $r$ at $\hx$ is an element $A \in \g$ such that $(\tilde{A}.\mathcal{D}^{r-1}K)(\hx)=0$. We note $\Kill^{r}(\hx)$ the space of Killing generators of order $r$ at $\hx$.
\end{dfn}

\begin{thm}[\cite{melnick}, Theorem 3.11]
\label{thm:frobenius}
Let $(M,\hat{M},\omega)$ be a real-analytic, compact Cartan geometry. Then, there exists $r_0 \geq 0$ such that for all $\hx \in \hat{M}$ and $A \in \Kill^{r_0+1}(\hx)$, there exists a local Killing vector field $A^*$ defined near $x \in M$ and such that $\omega_{\hx}(\hat{A}^*) = A$.
\end{thm}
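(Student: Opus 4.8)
The plan is to translate the existence of a local Killing field into the integrability of a first-order total differential system on $\hat M$ built from $\omega$, and then to integrate it using analyticity. Recall that a local Killing field $A^*$ of the Cartan geometry (equivalently, a local conformal vector field of $(M,[g])$) near $x$ corresponds to a right-$P$-invariant field $\hat A^*$ on $\pi^{-1}(U)$ with $\L_{\hat A^*}\omega = 0$. Writing $a := \omega(\hat A^*) : \pi^{-1}(U) \to \g$, the structure equation $\d\omega = K - \tfrac12[\omega,\omega]$ turns the condition $\L_{\hat A^*}\omega = 0$ into the explicit system
\begin{equation*}
\tilde B . a = [a,B] - K(a,B), \qquad \forall B \in \g, \tag{$\star$}
\end{equation*}
where $K$ is viewed as the ($P$-equivariant, horizontal) curvature function on $\hat M$. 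Since the fields $\{\tilde B\}_{B \in \g}$ frame $T\hat M$, $(\star)$ prescribes all first derivatives of $a$, so the theorem reduces to integrating $(\star)$ on a neighbourhood of $\hx$ with initial value $a(\hx) = A$. Note that for $B = X \in \p$ the horizontal tensor $K$ contributes nothing and $(\star)$ reads $\tilde X.a = [a,X]$, which is exactly the infinitesimal $P$-equivariance of $a$; hence integrating $(\star)$ also along the vertical directions produces an automatically right-$P$-invariant solution, which descends to the sought field $A^*$ on $U$.

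Next I would identify the obstructions to integrating $(\star)$ with the spaces $\Kill^r(\hx)$. Differentiating $(\star)$ along a second frame field and using $\omega([\tilde B_1,\tilde B_2]) = [B_1,B_2] - K(B_1,B_2)$, the compatibility (Frobenius) conditions, expanded at $\hx$, reorganize into the requirement that $A$ annihilate the curvature together with its successive covariant derivatives. Concretely, $(\star)$ is formally integrable to order $r$ through $A$ at $\hx$ if and only if $(\tilde A.\mathcal D^{r-1}K)(\hx) = 0$, i.e. $A \in \Kill^r(\hx)$; the fully formal solutions correspond to $A \in \Kill^\infty(\hx) := \bigcap_{r} \Kill^r(\hx)$. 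This is the conceptual reason the $\mathcal D^rK$ are the right objects to consider.

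The principal difficulty is to produce a single order $r_0$, \emph{uniform over $\hat M$}, past which the finite condition $A \in \Kill^{r_0+1}(\hx)$ already forces $A \in \Kill^\infty(\hx)$. The $\Kill^r(\hx)$ form a decreasing chain of linear subspaces of the fixed finite-dimensional $\g$, so for each $\hx$ the chain stabilizes; the real content, where Gromov's rigidity enters, is (i) showing that once the chain stabilizes it already equals $\Kill^\infty(\hx)$, and (ii) bounding the stabilization order independently of $\hx$. For (i) one exploits the recursive structure of $\mathcal D^rK$: the order-$(r{+}1)$ datum is obtained by differentiating the order-$r$ datum along the $\omega$-constant frame, so via the bracket relations above and the equivariance of $\mathcal D^rK$, stabilization of the symbol propagates to all higher orders. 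For (ii) one invokes the finite-dimensionality of the targets $\mathcal W^r$ together with a Noetherian and upper-semicontinuity argument for the algebraic stratification induced by $\mathcal D^rK$, made uniform by the compactness of $M = \hat M/P$. I expect this uniform bound to be the main obstacle, as it is exactly Gromov's finiteness theorem for $A$-rigid structures.

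Finally I would carry out the analytic integration. Given $A \in \Kill^{r_0+1}(\hx) = \Kill^\infty(\hx)$, all formal integrability conditions of $(\star)$ hold at $\hx$. Since the geometry is real-analytic, $\omega$, $K$, and hence $(\star)$, are analytic, and the analytic Frobenius theorem (equivalently, Cauchy–Kovalevskaya applied along the $\omega$-constant flows) promotes the formal solution into a genuine convergent analytic solution $a$ on a neighbourhood $V$ of $\hx$: the integrability expressions are analytic functions vanishing to infinite order at $\hx$, hence identically near $\hx$, which is precisely what legitimizes the passage from a formal to an actual solution. This is where analyticity is indispensable; in the smooth category one controls only finitely many derivatives and integrability is guaranteed merely on an open dense set. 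Setting $\hat A^* := \omega^{-1}(a)$ yields, by the remark in the first paragraph, a right-$P$-invariant field with $\L_{\hat A^*}\omega = 0$ and $\omega_{\hx}(\hat A^*) = A$, which projects to the desired local Killing field $A^*$, completing the proof.
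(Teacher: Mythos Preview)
The paper does not give a proof of this statement. Theorem~\ref{thm:frobenius} is quoted from Melnick's article (\cite{melnick}, Theorem~3.11) and is used as a black box: the paper only recalls the notion of Killing generators, states the theorem, and immediately applies it to produce the local conformal vector field $A^*$. There is therefore nothing in the paper to compare your argument against.

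That said, your sketch is in the spirit of how this result is actually established in \cite{melnick} (and, in Gromov's original language, in \cite{gromov}): one rewrites $\L_{\hat A^*}\omega=0$ as a first-order total differential system for $a=\omega(\hat A^*)$ along the $\omega$-constant frame, identifies the successive integrability conditions with the vanishing of $\tilde A.\mathcal D^{r-1}K$, obtains a uniform stabilization order, and then uses analyticity to pass from formal to genuine solutions. Two places in your outline are where the real work lies and where your sketch is, understandably, thin. First, the claim that ``stabilization of the symbol propagates to all higher orders'' is not automatic: one needs a precise prolongation argument showing that $\Kill^{r}(\hx)=\Kill^{r+1}(\hx)$ implies $\Kill^{r+1}(\hx)=\Kill^{r+2}(\hx)$, and this uses more than just the bracket relations and equivariance you invoke. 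Second, the uniform bound on $r_0$ over all of $\hat M$ is the genuinely delicate step; a bare ``Noetherian and upper-semicontinuity'' appeal does not suffice, since $\dim\Kill^r(\hx)$ is only upper-semicontinuous and the stabilization order can a priori jump. In the analytic compact case this is handled through the stratification by the maps $\mathcal D^rK$ and a descending-chain argument, but it requires care. These are exactly the points where \cite{melnick} (and \cite{article1} in the smooth setting, on the integrability locus) do the substantive work.
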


\begin{rem}
In smooth regularity, this result remains valid but only over an open and dense subset of $M$, called the integrability domain of the Cartan geometry $(M,\hat{M},\omega)$ (cf \cite{article1}, Theorem 2).
\end{rem}

We can start the proof of the existence of the flow $\phi_{A^*}^t$. Recall that we have a conformal action of $H \simeq_{\text{loc}} \PSL(2,\R)$ on $(M,g)$ such that there exists a closed $H$-invariant subset $F \subset M$ in which every orbit is $2$-dimensional. Let $r_0$ be the integer given by Theorem \ref{thm:frobenius}. 

As in the case of Proposition \ref{prop:zimmer_conformal}, the beginning of this proof is directly inspired by more general results that we have adapted to our context. The idea is to apply Borel's density in a larger space, in order to get informations on the curvature. This is done in \cite{bader_frances_melnick}, and also \cite{melnick} where Melnick uses this method to give a proof of Gromov's centralizer theorem for Cartan geometries.

\subsection{Applying Borel's density theorem} 
Recall that the action of $H$ defines an $H \times P$-equivariant map $\iota : \hat{M} \rightarrow \Mon(\h,\g)$, where $\Mon(\h,\g)$ denotes the variety of linear injective maps $\h \rightarrow \g$ (proof of Proposition \ref{prop:zimmer_conformal}). Let $V = \Mon(\h,\g) \times \mathcal{W}^{r_0}$, let $\Ad_{\h}(H) \times \Ad_{\g}(P)$ act on it via $(\Ad_{\h}(h),\Ad_{\g}(p)).(\alpha,w) = (\Ad_{\g}(p) \circ \alpha \circ \Ad_{\h}(h^{-1}),w.\Ad_{\g}(p^{-1}))$ and let $\phi = (\iota,\mathcal{D}^{r_0}K) : \hat{M} \rightarrow V$. Immediately, we get that $\phi$ is $H \times P$-equivariant when $H \times P$ act on $\hat{M}$ via $h.\hx.p^{-1}$, since for all $h \in H$, we have $\mathcal{D}^{r_0}K(h.\hx) = \mathcal{D}^{r_0}K(\hx)$.

\vspace*{0.2cm}

Let $(X,Y,Z)$ be an $\sl(2)$-triple of $\h$. Let $S < H$ be the connected Lie subgroup with Lie algebra $\Span(X,Y) \simeq \aff(\R)$. Since $S$ is amenable, there exists a finite $S$-invariant measure $\mu$ supported on $F$. Using exactly the same methods than in the proof of Proposition \ref{prop:zimmer_conformal}, by Borel's density theorem we obtain that for $\mu$-almost every $x \in M$, for every $\hx$ over $x$, we have $\bar{S}.\phi(\hx) \subset \bar{P}.\phi(\hx)$ where $\bar{S}$ denotes the Zariski closure of $\Ad_{\h}(S)$ in $\GL(\h)$ (it is isomorphic to the affine group $\R^* \ltimes \R$ of the real line) and $\bar{P}$ the Zariski closure of $\Ad_{\g}(P)$ in $\GL(\g)$.

We fix once and for all such a point $x_0$. First of all, if $\hx_0\in \pi^{-1}(x_0)$, we have $S.\iota_{\hx_0} \subset P.\iota_{\hx_0}$, so that the conclusions of Proposition \ref{prop:zimmer_conformal} are valid at $x_0$ (its proof is based on this inclusion). We proved at the beginning of Section \ref{s:conformal_flatness} that this implies that the stabilizer at $x_0$ is $\h_{x_0} = \R.Y$, the orbit of $x_0$ is degenerate and at $x_0$ the vector field $X$ gives the direction of the kernel. As it is done in \cite{bader_frances_melnick} and \cite{melnick}, we derive a more elaborated information from the inclusion $\bar{S}.\phi(\hx_0) \subset \bar{P}.\phi(\hx_0)$. Let $\h^{\hx_0} \subset \g$ denote the linear subspace $\iota_{\hx_0}(\h)$.

\begin{prop}
There exists an algebraic subgroup $\bar{P^{\hx_0}} \subset \bar{P} \subset \GL(\g)$ such that $\bar{P^{\hx_0}}$ fixes $\mathcal{D}^{r_0}K(\hx_0)$, $\bar{P^{\hx_0}} \, . \, \h^{\hx_0} \subset \h^{\hx_0}$ and for all $\bar{p} \in \bar{P^{\hx_0}}$, the restriction $\bar{p}|_{\h^{\hx_0}}$ is conjugated by $\iota_{\hx_0}$ to an element of $\bar{S} \subset \GL(\h)$. Moreover, the natural algebraic morphism $\bar{\rho}_{\hx_0} : \bar{P^{\hx_0}} \rightarrow \bar{S}$ is surjective.
\end{prop}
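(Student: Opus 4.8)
The plan is to produce $\bar{P^{\hx_0}}$ by hand as the subgroup of $\bar P$ that moves $\phi(\hx_0)$ exactly as elements of $\bar S$ do, and then to extract its three properties by reading the Borel-density inclusion $\bar S.\phi(\hx_0) \subset \bar P.\phi(\hx_0)$ separately in the two factors of $V = \Mon(\h,\g) \times \mathcal{W}^{r_0}$. I would define $\bar{P^{\hx_0}}$ to be the set of $\bar p \in \bar P$ that fix $\mathcal{D}^{r_0}K(\hx_0)$, stabilize the subspace $\h^{\hx_0} = \iota_{\hx_0}(\h)$, and satisfy $\iota_{\hx_0}^{-1} \circ (\bar p|_{\h^{\hx_0}}) \circ \iota_{\hx_0} \in \bar S$. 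With this definition the first three assertions hold tautologically, and the announced morphism is $\bar{\rho}_{\hx_0} : \bar p \mapsto \iota_{\hx_0}^{-1} \circ (\bar p|_{\h^{\hx_0}}) \circ \iota_{\hx_0}$; it is a group homomorphism into $\bar S$ precisely because every element of $\bar{P^{\hx_0}}$ preserves $\h^{\hx_0}$, so restriction followed by conjugation by $\iota_{\hx_0}$ respects composition.

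I would next verify that $\bar{P^{\hx_0}}$ is an algebraic subgroup. The stabilizer of the point $\mathcal{D}^{r_0}K(\hx_0) \in \mathcal{W}^{r_0}$ and the stabilizer of the subspace $\h^{\hx_0} \subset \g$ are algebraic subgroups of $\GL(\g)$ (an invertible endomorphism mapping $\h^{\hx_0}$ into itself maps it onto itself, by finite-dimensionality). On the stabilizer of $\h^{\hx_0}$ the map $\bar{\rho}_{\hx_0}$ is an algebraic homomorphism into $\GL(\h)$, so the preimage under it of the Zariski-closed subgroup $\bar S$ is again algebraic. Intersecting these three algebraic subgroups with $\bar P$ yields $\bar{P^{\hx_0}}$.

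The heart of the matter is the surjectivity of $\bar{\rho}_{\hx_0}$, which is where the Borel-density inclusion enters. Unwinding the $H \times P$-action on $V$, an element $\bar s \in \bar S$ acts on $\phi(\hx_0) = (\iota_{\hx_0}, \mathcal{D}^{r_0}K(\hx_0))$ only through the $\Mon$-factor and by precomposition, since the curvature jet $\mathcal{D}^{r_0}K$ is $H$-invariant; thus $\bar s.\phi(\hx_0) = (\iota_{\hx_0} \circ \bar s^{-1}, \mathcal{D}^{r_0}K(\hx_0))$. The inclusion furnishes $\bar p \in \bar P$ with $\bar p.\phi(\hx_0) = \bar s.\phi(\hx_0)$, i.e.
\begin{equation*}
(\bar p \circ \iota_{\hx_0}, \ \mathcal{D}^{r_0}K(\hx_0).\bar p^{-1}) = (\iota_{\hx_0} \circ \bar s^{-1}, \ \mathcal{D}^{r_0}K(\hx_0)).
\end{equation*}
The $\mathcal{W}^{r_0}$-coordinate shows that $\bar p$ fixes $\mathcal{D}^{r_0}K(\hx_0)$, while the $\Mon$-coordinate gives $\bar p \circ \iota_{\hx_0} = \iota_{\hx_0} \circ \bar s^{-1}$, so $\bar p$ stabilizes $\h^{\hx_0}$ with $\bar p|_{\h^{\hx_0}} = \iota_{\hx_0} \circ \bar s^{-1} \circ \iota_{\hx_0}^{-1}$. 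Hence $\bar p \in \bar{P^{\hx_0}}$ and $\bar{\rho}_{\hx_0}(\bar p) = \bar s^{-1}$. As $\bar s$ ranges over the group $\bar S$, its inverse $\bar s^{-1}$ ranges over all of $\bar S$, so $\bar{\rho}_{\hx_0}$ is onto.

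The only genuinely delicate point is the bookkeeping of conventions: one must keep the left $\bar S$-action (precomposition by $\bar s^{-1}$ on the $\Mon$-factor) distinct from the right $\bar P$-action (postcomposition on $\Mon$ together with the right $\mathcal{W}^{r_0}$-action), and track the inverses carefully, so that the two coordinates land the identifications in the right places and $\bar{\rho}_{\hx_0}$ comes out as a homomorphism and not an anti-homomorphism. Beyond this, no new ingredient past the already-established inclusion is needed, and the algebraicity claims are all of the standard stabilizer-and-preimage type.
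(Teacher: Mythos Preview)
Your proposal is correct and follows essentially the same route as the paper: define $\bar{P^{\hx_0}}$ as the intersection of the stabilizer of $\h^{\hx_0}$, the preimage of $\bar S$ under the restriction-and-conjugation map, and the stabilizer of $\mathcal{D}^{r_0}K(\hx_0)$, then read off surjectivity from the two coordinates of the inclusion $\bar S.\phi(\hx_0)\subset \bar P.\phi(\hx_0)$. You are simply more explicit than the paper about the bookkeeping of left/right actions and inverses, but no new idea is involved.
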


\begin{rem}
The linear action on $\Ad_{\g}(P)$ on the target space $\mathcal{W}^{r_0}$ of the map $\mathcal{D}^{r_0}K$ naturally extends to an action of $\bar{P}$ (more generally to the subgroup of $\GL(\g)$ that preserves $\p$).
\end{rem}

\begin{proof}
Let $\chech{P} := \{\bar{p} \in \bar{P} \ | \ \bar{p} \, . \h^{\hx_0} \subset \h^{\hx_0}\}$ and let $\chech{\rho} : \bar{p} \in \chech{P} \mapsto \iota_{\hx_0}^{-1} \circ \bar{p} |_{\h^{\hx_0}} \circ \iota_{\hx_0} \in \GL(\h)$. The group $\chech{P}$ is algebraic, and so is the morphism $\chech{\rho}$. The inclusion $\bar{S}.\phi(\hx_0) \subset \bar{P}.\phi(\hx_0)$ implies that $\bar{S}$ is contained in the image of $\chech{\rho}$. Define now $\bar{P^{\hx_0}}$ as being the intersection of the preimage of $\bar{S}$ by $\chech{\rho}$ and the stabilizer of $\mathcal{D}^{r_0}K(\hx_0)$ in $\bar{P}$. It is an algebraic group, and if we note $\bar{\rho}_{\hx_0}$ the restriction of $\chech{\rho}$ to this subgroup, $\bar{\rho}_{\hx_0}$ takes values in $\bar{S}$ and is onto.
\end{proof}

The group $\Ad_{\g}(P)$ has finite index in $\bar{P}$. So, $\Ad_{\g}(P)$ contains the identity component of $\bar{P}$, and necessarily the identity component of $\bar{P^{\hx_0}}$. The latter is sent by $\bar{\rho}_{\hx_0}$ onto $\Ad_{\h}(S)$, by connectedness of $S$. We define now $P^{\hx_0} \subset P$ as being the preimage of the identity component of $\bar{P^{\hx_0}}$ by the algebraic morphism $\Ad_{\g} : P \rightarrow \GL(\g)$. Finally, we define the surjective morphism
\begin{equation*}
\rho_{\hx_0} := \bar{\rho}_{\hx_0} \circ \Ad_{\g} : P^{\hx_0} \rightarrow \Ad_{\h}(S).
\end{equation*}

If $\{\e^{tA}\} \subset P^{\hx_0}$ is a one parameter subgroup, then $A \in \Kill^{r_0+1}(\hx_0)$ and Theorem \ref{thm:frobenius} gives a local conformal Killing vector field $A^*$ defined near $x_0$ such that  $\omega_{\hx_0}(\hat{A}^*) = A$. Since $A \in \p$, $\hat{A}^*$ is vertical at $\hx_0$, \textit{i.e.} $A^*(x_0) = 0$ and we have $\phi_{\hat{A}^*}^t(\hx_0) = \hx_0 . \e^{tA}$. This element $A \in \p$ is called the \textit{holonomy} of the conformal vector field $A^*$ at $\hx_0$, in the terminology of \cite{frances_localdynamics}, and it determines the behaviour of $A^*$ near $x_0$. However, relating the dynamic of a conformal vector field near a singularity to the algebraic properties of its holonomy is a difficult issue in general (it is the object of \cite{frances_localdynamics} and \cite{frances_melnick13}). 

\vspace*{.2cm}

Our situation is more comfortable: using the fact that $\bar{\rho}_{\hx_0}$ is algebraic and surjective, we will choose a special element $A \in \p^{\hx_0}$ so that it will be almost immediate that the corresponding local vector field $A^*$ has a linear dynamic, directly related to $\Ad_{\g}(\e^{tA})$.

\subsection{Exhibiting an appropriate holonomy}
\label{ss:holonomy}

Let $\{ p^t \} \subset P^{\hx_0}$ be such that $\rho_{\hx_0}(p^t) = \Ad_{\h}(\e^{tX})$. The one parameter group $\{p^t\}$ admits a Jordan decomposition\footnote{%
The Jordan decomposition is valid in algebraic groups. Here, $P \subset \PO(2,n)$ is not algebraic. Nevertheless, it is the quotient of an algebraic subgroup of $O(2,n)$ by $\{\pm \id\}$. So, when we speak of algebraic properties of elements or subgroups of $P$, we deal with the lift of these elements or subgroups to $O(2,n)$.
}
in $P$, \textit{i.e.} it splits (uniquely) into a commutative product $p^t = p_h^t p_u^t p_e^t$, where $p_h^t \in P$ (resp. $p_u^t \in P$, $p_e^t \in P$) is hyperbolic (resp. unipotent, elliptic)  (see \cite{morris}, §4.3). Because $\Ad_{\g} : P \rightarrow \GL(\g)$ is algebraic, it sends $p_h^t$ (resp. $p_u^t$, $p_e^t$) on an hyperbolic $\bar{p}_h^t$ (resp. unipotent, elliptic) one-parameter subgroup of $\GL(\g)$, whose product is equal to $\Ad_{\g}(p^t)$. Thus, $\Ad_{\g}(p^t) = \Ad_{\g}(p_h^t) \Ad_{\g}(p_u^t) \Ad_{\g}(p_e^t)$ is \textit{the} Jordan decomposition of $\Ad_{\g}(p^t) \in \bar{P^{\hx_0}}$. Therefore, since $\bar{P^{\hx_0}}$ is algebraic, $\Ad_{\g}(p_h^t)$, $\Ad_{\g}(p_u^t)$ and $\Ad_{\g}(p_e^t)$ are in $\bar{P^{\hx_0}}$ (\cite{morris}, 4.3.4), proving that $p_h^t, p_u^t, p_e^t \in P^{\hx_0}$. 

By the same argument, $\rho_{\hx_0}(p^t) = \rho_{\hx_0}(p_h^t) \rho_{\hx_0}(p_u^t) \rho_{\hx_0}(p_e^t)$ is the Jordan decomposition, in $\GL(\h)$, of $\rho_{\hx_0}(p^t) = \Ad_{\h}(\e^{tX})$. The latter is of course an hyperbolic one parameter subgroup. Using once more the uniqueness of the Jordan decomposition, we get $\rho_{\hx_0}(p_u^t) = \rho_{\hx_0}(p_e^t) = \id$. Thus, we have $\rho_{\hx_0}(p_h^t) = \Ad_{\h}(\e^{tX})$: it is not restrictive to assume that $\{p^t\}_{t \in \R}$ is hyperbolic. Therefore, its adjoint action on $\g$ is $\R$-split, meaning that $\{p^t\}$ is in a Cartan subgroup of $G$. We now give a brief description of the Lie algebra $\g \simeq \so(2,n)$.

Let $J = 
\begin{pmatrix}
0 & 0 & 1 \\
0 & I_{n-2} & \\
1 & 0 & 0
\end{pmatrix}
$, and for $u \in \R^n$, let $u^*$ denote $J \! \! ~^{t}\! u$. 

Choose coordinates $x_1,\ldots,x_{n+2}$ on $\R^{2,n}$ in such a way that the quadratic form is written $2x_1x_{n+2} + 2x_2x_{n+1} + x_3^2 + \cdots x_n^2$ and such that the parabolic subgroup $P < PO(2,n)$ is the stabilizer of the isotropic line $[1:0:\cdots:0]$. In such a basis, any matrix of $\so (2,n) $ has the form

\begin{equation}
\label{equ:o2n}
\left (
\begin{array}{c|ccc|c}
\lambda & & u & & 0 \\
\hline
&&&& \\
-v^* & & M & & -u^* \\
&&&& \\
\hline
0 & & v & & -\lambda
\end{array}
\right )
\end{equation}
where $u,v \in \R^n$ and $M \in \so(1,n-1)$, \textit{i.e.} $J \! ~^{t} \! M + MJ = 0$. Moreover the Lie algebra of $P$ corresponds to matrices verifying $v= 0$. We identify a natural Cartan subspace in this presentation:
\begin{equation*}
\text{define } \a = 
\left \{
\begin{pmatrix}
\lambda & 0 & 0 \\
0 & M_{\mu} & 0 \\
0 & 0 & -\lambda
\end{pmatrix}
, \ \lambda \in \R, \ \mu \in \R
\right \}
\text{ where }
M_{\mu} :=
\begin{pmatrix}
\mu & 0 & 0 \\
0   & 0 & 0 \\
0 & 0 & -\mu
\end{pmatrix}
\in \so(1,n-1),
\end{equation*}
(the zero in the center of $M_{\mu}$ having size $(n-2)\times(n-2)$). On can verify that the $M_{\mu}$'s form a Cartan subspace of $\so(1,n-1)$ and that $\a$ is a Cartan subspace of $\g$. Thus $p^t$ is conjugated in $G$ to a one-parameter subgroup of $\exp(\a)$. We prove that in fact, the conjugacy can be done \textit{in $P$}.

\begin{lem}
Let $A \in \p$ be an $\R$-split matrix. There exists $p \in P$ such that $\Ad(p)A \in \a$.
\end{lem}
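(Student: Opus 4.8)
The plan is to use the $|1|$-grading of $\g \simeq \so(2,n)$ associated with $P$ and to bring $A$ into $\a$ in two moves: first conjugating by the unipotent radical to land in the Levi factor $\g_0$, then conjugating inside the Levi to reach $\a$. Reading off \eqref{equ:o2n}, write $\g = \g_{-1} \oplus \g_0 \oplus \g_1$, where $\g_{\pm 1} \simeq \R^n$ are the $v$- and $u$-parts and $\g_0 = \co(1,n-1) = \R H_0 \oplus \so(1,n-1)$ is the Levi factor, $H_0 = \mathrm{diag}(1,0,\dots,0,-1)$ being the grading element. Then $\p = \g_0 \oplus \g_1$, the nilradical $\n = \g_1$ is abelian, and every element of $\g_1$ is a nilpotent matrix. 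I will write $A = A_0 + A_1$ with $A_0 \in \g_0$ and $A_1 \in \g_1$, noting that $A_0 = \pi(A)$ for the projection $\pi : \p \to \g_0$ killing $\g_1$; since $\n$ is an ideal of $\p$, $\pi$ is a homomorphism integrating to the algebraic Levi quotient $P \to L$, so $A_0$ is again $\R$-split.

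For the first step I would conjugate by $\exp(w)$, $w \in \g_1$. Because $[\g_1,\g_1]=0$ and $[\g_0,\g_1] \subset \g_1$, the exponential series truncates and $\Ad(\exp w)A = A_0 + (A_1 + [w,A_0])$, leaving the $\g_0$-component untouched. As $\ad A_0$ is semisimple and preserves $\g_1$, I split $\g_1 = \Ker(\ad A_0|_{\g_1}) \oplus \Ima(\ad A_0|_{\g_1})$ and choose $w$ so that $[w,A_0]$ cancels the image-component of $A_1$. This reduces matters to the case $[A_0,A_1]=0$, where $A_0 + A_1$ is a commuting sum of a semisimple and a nilpotent matrix; by uniqueness of the Jordan decomposition it is then the decomposition of a semisimple element, forcing $A_1 = 0$. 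Hence a suitable $\exp(w) \in N \subset P$ conjugates $A$ onto $A_0 \in \g_0$.

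For the second step, write $A_0 = \lambda H_0 + M$ with $M \in \so(1,n-1)$. Since $A_0$ is $\R$-split and $H_0$ is central in $\g_0$, the element $M$ is $\R$-split in $\so(1,n-1)$. Using the explicit Cartan subspace $\{M_{\mu}\}$ described above and the fact that $\so(1,n-1)$ has real rank one, all maximal $\R$-split abelian subalgebras being $\SO_0(1,n-1)$-conjugate, I can find $\ell \in \SO_0(1,n-1) \subset L \subset P$ with $\Ad(\ell)M = M_{\mu}$; as $\ell$ fixes $H_0$, this gives $\Ad(\ell)A_0 = \lambda H_0 + M_{\mu} \in \a$. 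Composing the two conjugations, $p := \ell \exp(w) \in P$ satisfies $\Ad(p)A \in \a$.

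The step I expect to be the crux is the first one: it is not a matter of a dimension count, since $\ad A_0|_{\g_1}$ may well be degenerate, but genuinely uses $\R$-splitness through the uniqueness of the Jordan decomposition together with the nilpotency of the elements of $\g_1$. The only bookkeeping nuisance is that $P \subset \PO(2,n)$ is not algebraic, so, as in the footnote, I would run the algebraic arguments (projection to $L$, Jordan decomposition, conjugacy of split tori) at the level of $\O(2,n)$ and descend.
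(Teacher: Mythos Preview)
Your argument is correct and follows essentially the same route as the paper's proof: both combine a conjugation inside the Levi factor $L \simeq \CO(1,n-1)$ (to bring $M$ into the Cartan line $\{M_{\mu}\}$) with a conjugation by $\exp(\g_1)$ (to kill the $\g_1$-component), and both conclude by observing that the residual commuting sum ``semisimple $+$ nilpotent'' is the Jordan decomposition of an $\R$-split element, forcing the nilpotent part to vanish. The only cosmetic difference is that you perform the two steps in the opposite order and phrase them in the abstract $|1|$-grading language, whereas the paper first diagonalises $M$ and then carries out the nilpotent step by an explicit computation of $[A_{\ell},T(v)]$.
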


\begin{proof}
We know that $A$ is an upper-triangular matrix of the form
\begin{equation*}
A = 
\begin{pmatrix}
\lambda & u & 0\\
 & M & -u^* \\
 &  & -\lambda
\end{pmatrix}
\end{equation*}
with $\lambda \in \R$ and $M \in \so(1,n-1)$. Since $A$ is diagonalizable over $\R$, so is $M$. Since $\Ad_{\so(1,n-1)} : \SO(1,n-1) \rightarrow \GL(\so(1,n-1))$ is algebraic, $\Ad_{\so(1,n-1)}(\e^{tM})$ is $\R$-split, proving that $M$ is in a Cartan subspace of $\so(1,n-1)$. Thus, we have $g \in \SO(1,n-1)$ and $\mu \in \R$ such that $M = \Ad(g) M_{\mu}$ and
\begin{equation*}
\Ad(p_0)A = : A' =
\begin{pmatrix}
\lambda & u' & 0\\
 & M_{\mu} & -u'^* \\
 &  & -\lambda
\end{pmatrix},
\text{ where }
p_0 := 
\begin{pmatrix}
1 & & \\
 & g & \\
 & & 1
\end{pmatrix} 
\in P \text{ and } u' = u.g^{-1}.
\end{equation*}
We note 
\begin{equation*}
A_{\ell} = 
\begin{pmatrix}
\lambda & u' & 0\\
 & M_{\mu} & -u'^* \\
 &   & -\lambda
\end{pmatrix}
, \text{ and for all } v \in \R^n, \ T(v) =
\begin{pmatrix}
0 & v & 0 \\
  & 0 & -v^* \\
  & & 0
\end{pmatrix},
\end{equation*}
so that $A' = A_{\ell} + T(u')$. If $v \in \R^n$, we have $[A_{\ell},T(v)] = ((\lambda - \mu)v_1,\lambda v_2,\ldots,\lambda v_{n-1},(\lambda+\mu)v_n)$. We set $\lambda_1 = \lambda - \mu$, $\lambda_2 = \cdots = \lambda_{n-1} = \lambda$ and $\lambda_n = \lambda+\mu$ and define $v^0 \in \R^n$ by
\begin{equation*}
\forall 1 \leq i \leq n, \ v_i^0 = 
\begin{cases}
u_i' / \lambda_i \text{ if } \lambda_i \neq 0 \\
0 \text{ else}.
\end{cases}
\end{equation*}
Finally, if $p= \exp(T(v^0)) \in P$, we have $A'' := \Ad(p)A' = A' + [T(v^0),A'] = A_{\ell} + T(u'')$, with $u'' \in \R^n$ such that $[A_{\ell},T(u'')]=0$. Thus, $A''$ and $A_{\ell}$ commute and are $\R$-split. So, $T(u'')$ is also $\R$-split, proving $T(u'') = 0$ and $A = \Ad(p_0^{-1}p^{-1}) A_{\ell}$.
\end{proof}

Thus, we know that our one-parameter subgroup $\{p^t\} \subset P^{\hx_0}$ is conjugated in $P$ to a one-parameter subgroup of $\exp(\a)$. Note that the point $\hx_0$ in the fiber of $x_0$ has been chosen arbitrarily. We compute that $P^{\hx_0.p} = pP^{\hx_0}p^{-1}$ for every $p \in P$. Finally, there is a choice of $\hx_0 \in \pi^{-1}(x_0)$ such that there exists $A \in \a$ verifying $\e^{tA} \in P^{\hx_0}$ and $\rho_{\hx_0}(\e^{tA}) = \Ad_{\h}(\e^{tX})$. Until the end of this section, $\hx_0$ denotes this particular element in the fiber $\pi^{-1}(x_0)$.

\subsection{Linear dynamic of $\phi_{A^*}^t$}

By definition of $\rho_{\hx_0}$, $\Ad_{\g}(\e^{tA}) \iota_{\hx_0}(X) = \iota_{\hx_0}(\Ad_{\h}(\e^{tX}).X) = \iota_{\hx_0}(X)$. Since $X_{x_0}$ is an isotropic tangent vector in $T_{x_0}M$, the projection of $\iota_{\hx_0}(X)$ in $\g/\p$ is isotropic with respect to $Q$ (cf. Section \ref{ss:cartan_geometry}). If we take back the description of $\so(2,n)$ in (\ref{equ:o2n}), identifying $\g / \p \simeq \n_-$ where $\n_- \subset \g$ denotes the space of strictly lower triangular matrices
\begin{equation*}
\n_- =
\left \{
\begin{pmatrix}
& & \\
-v^* & \\
0 & v &
\end{pmatrix}
, \ v=(v_1,\ldots,v_n) \in \R^n
\right \},
\end{equation*}
the quadratic form $Q$ is a positive multiple of $2v_1v_n + v_2^2 + \cdots + v_{n-1}^2$. Note that $\Ad_{\g}(\e^{tA})$ preserves the subspace $\n_-$, so that its action on $\g/\p$ is conjugated to its restriction to $\n_-$. Precisely, there are $\lambda$ and $\mu$ such that the action of $\Ad_{\g}(\e^{tA})$ on $\n_-$ is $(\e^{(\mu - \lambda)t} v_1, \e^{-\lambda t}v_2,\ldots, \e^{-\lambda t}v_{n-1},\e^{-(\mu +\lambda)t}v_n)$. Since $\bar{\Ad_{\g}}(\e^{tA})$ fixes an isotropic vector in $\g / \p$, we see that up to exchanging $v_1$ and $v_n$ and rescaling $A$, in the coordinates $(v_1,\ldots,v_n)$,
\begin{equation}
\label{equ:allure_flot}
\Ad_{\g}(\e^{tA})|_{\n_-} =
\begin{pmatrix}
1 & & & & \\
  & \e^{-t} & & & \\
  & & \ddots & & \\
  & & & \e^{-t} & \\
  & & & & \e^{-2t}
\end{pmatrix}.
\end{equation}

The final ingredient is the notion of \textit{conformal exponential chart}. If $X_0 \in \g$, we note $\tilde{X_0} := \omega^{-1}(X_0)$ the $\omega$-constant vector field on $\hat{M}$ associated to $X_0$. If $X_0 \in \g$ is small enough, set $\exp(\hx,X_0) = \phi_{\tilde{X_0}}^1(\hx)$ (the local flow of $\tilde{X_0}$ at time $1$) for $\hx \in \hat{M}$. We get a local diffeomorphism $\exp(\hx, .) : \mathcal{U} \subset \g \rightarrow \hat{U} \subset \hat{M}$ where $\mathcal{U}$ is a neighbourhood of the origin and $\hat{U}$ is neighbourhood of $\hx$.

\vspace*{0.2cm}

Consider $A^*$ the local conformal vector field given by Theorem \ref{thm:frobenius}. Since $\omega_{\hx_0}(\hat{A}^*) = A$, the lift $\hat{A}^*$ is tangent to the fiber $\pi^{-1}(x_0)$, so that its local flow preserves this fiber and satisfies
\begin{equation*}
\hat{\phi_{A^*}^t}(\hx_0) = \hx_0 . \e^{t A}.
\end{equation*}
Since $A^*$ is conformal, its lift commutes with $\omega$-constant vector fields. Therefore, for small $X_0 \in \g$ and $t$, we have
\begin{align*}
\hat{\phi_{A^*}^t}(\exp(\hx_0,X_0)) & = \exp(\hat{\phi_{A^*}^t}(\hx_0),X_0) = \exp(\hx_0 . \e^{tA},X_0) \\
                                   & = \exp(\hx_0,\Ad_{\g}(\e^{tA})X_0).\e^{tA},
\end{align*}
the last equality coming from the property $\forall p \in P$, $(R_p)_* \tilde{X_0} = \tilde{\Ad_{\g}(p^{-1})X_0}$ where $R_p$ denotes the right action of $p$ on the Cartan bundle $\hat{M}$ (third property of the Cartan connection).

\vspace*{0.2cm}

Define $\psi : \mathcal{U} \cap \n_- \rightarrow U:=\pi(\hat{U})$ by $\psi(X_0) = \pi(\exp(\hx_0,X_0))$. If $\mathcal{U} \subset \g = \n_- \oplus \p$ has been chosen of the form $\mathcal{U}_{\n_-} \times \mathcal{U}_{\p}$, with $\mathcal{U}_{\n_-}$ a cylinder of the form $]-\epsilon,\epsilon[ \times B$ in the coordinates $(v_1,\ldots,v_n)$, with $B$ an open Euclidian ball in $\{v_1 = 0\}$, then $\mathcal{U}_{\n_-}$ is preserved by $\Ad_{\g}(\e^{tA})$ for all positive times $t$. Moreover, we see that if $\phi_{A^*}^t$ is defined at time $t$, then for all $X_0 \in \mathcal{U}_{\n_-}$
\begin{equation*}
\phi_{A^*}^t(\psi(X_0)) = \pi (\hat{\phi_{A^*}^t}(\exp(\hx_0,X_0))) = \pi(\exp(\hx_0,\Ad_{\g}(\e^{tA})X_0)) = \psi(\Ad_{\g}(\e^{tA})X_0).
\end{equation*}

This shows that the local flow of $A^*$ is conjugated by $\psi$ to $\Ad_{\g}(\e^{tA})|_{\n_-}$. In particular, it is defined for all positive times on $\psi(\mathcal{U}_{\n_-})$ and is conjugated by $\psi$ to the linear flow given in (\ref{equ:allure_flot}). At last, if we identify $\n_- \simeq T_{x_0}M$ via $X_0 \mapsto \pi_* (\tilde{X_0})_{\hx_0}$, the restriction $\Ad_{\g}(\e^{tA})|_{\n_-}$ is conjugated to the isotropy $T_{x_0}\phi_{A^*}^t$ and $X_{x_0}$ corresponds to $\frac{\partial}{\partial v_1}(x_0)$ (the letter $X$ refers to the $\sl(2)$-triple $(X,Y,Z)$ we fixed at the beginning of the proof).

\vspace*{0.2cm}

We finally prove $[A^*,X] = 0$ on $U$. To see that this vector field vanishes identically on $U$, it is enough to prove that its lift, which is $[\hat{A}^*,\hat{X}]$, vanishes at some point in $\hat{U}$ by connectedness of $U$. Since $\rho_{\hx_0}(\e^{tA}) = \Ad_{\h}(\e^{tX})$, we have $[A , \iota_{\hx_0}(X)] = 0$, \textit{i.e.} $[\omega_{\hx_0}(\hat{A}_{\hx_0}),\omega_{\hx_0}(\hat{X}_{\hx_0})] = 0$. A standard property of Cartan connections (see \cite{sharpe}, p.192) ensures
\begin{equation*}
\omega_{\hx_0}([\hat{A}^*,\hat{X}]_{\hx_0}) = [\omega_{\hx_0}(\hat{A}_{\hx_0}^*),\omega_{\hx_0}(\hat{X}_{\hx_0})] + \Omega_{\hx_0}(\hat{A}_{\hx_0}^*,\hat{X}_{\hx_0}).
\end{equation*}
where $\Omega := \d \omega + \frac{1}{2}[\omega,\omega]$ denotes the curvature form. Another general property of this $2$-form is its horizontality: it vanishes as soon as one of its arguments is vertical (\cite{sharpe}, Ch.5, Cor.3.10). This finishes the proof since $\hat{A}^*$ is tangent to the fiber $\pi^{-1}(x_0)$.

\bibliographystyle{amsalpha}
\bibliography{references_sl2.bib}
\nocite{*}

\vspace*{1cm}

\hspace*{8cm}
\begin{minipage}[b]{0.55\linewidth}
Vincent \textsc{Pecastaing} \\
Institut für Mathematik \\
Humboldt University of Berlin \\
\texttt{vincent.pecastaing@normalesup.org}
\end{minipage}
\end{document}